\newtheorem*{theorem*}{Theorem}
\newtheorem{theorem}{Theorem}
\newtheorem{lemma}{Lemma}
\newtheorem{proposition}{Proposition}
\newtheorem{remark}{Remark}
\numberwithin{equation}{section}
\begin{document}

\title [Quantitative comparison theorems]{Quantitative comparison theorems in Riemannian and K\"ahler geometry}
\author{Kwok-Kun Kwong}
\address{Department of Mathematics, National Cheng Kung University, Tainan City 70101, Taiwan}
\address{Current address: School of Mathematics and Statistics, University of Sydney, NSW 2006, Australia}
\email{kwok-kun.kwong@sydney.edu.au}

\subjclass[1991] {53C23}
\keywords{Laplcian comparison theorem, Bishop-Gromov volume comparison theorem, Myers theorem, G\"unther's theorem}

\begin{abstract}
We obtain sharp quantitative Laplacian upper and lower estimates under no assumption on curvatures. As a result, we derive quantitative Laplacian, area and volume comparison theorems for tubes in Riemannian and K\"ahler manifolds under weak integral curvature assumptions. We also give some applications, such as a general Bonnet-Myers theorem and Cheng's eigenvalue estimate under weak integral curvature assumptions.
\end{abstract}
\maketitle

\section{Introduction}
The aim of this paper is to look for weaker conditions than a lower Ricci curvature bound or upper sectional curvature bound such that some classical comparison theorems, such as the Laplacian comparison, Bishop-Gromov volume comparison theorem and G\"unther's theorem still hold.

The motivation is as follows. For the standard proof of the Bishop-Gromov volume comparison theorem, only the Ricci curvature lower bound in the radial direction is used. And most often a comparison theorem holds true already for the area or volume element after some Sturm-Liouville type ODE argument. Since the area of the geodesic sphere is just the integral of the area element it is natural to expect that some kind of lower bound for the ``integral'' of the Ricci curvature in the radial directions should be enough to guarantee an area comparison theorem for the geodesic sphere.
On the other hand, the naive approach of replacing the Ricci curvature simply by the average of Ricci curvatures over all directions, i.e. the scalar curvature, does not work because there are counter-examples (Remark \ref{rem: sc}). It turns out that a weighted version of the integral of the Ricci curvature suffices to ensure Laplacian and volume comparison (Theorems \ref{thm: est}, \ref{thm: area vol est}). This direction of research has been previously pursued in a number of papers, such as \cite{avez1972riemannian}, \cite{calabi1967ricci}, \cite{gallot1988isoperimetric}, \cite{markvorsen1982ricci}, and relatively more recently \cite{peterson1997comparison}, \cite{petersen1998integral}, \cite{petersen1997relative}. See also \cite{li2005comparison}, \cite{tam2012some} and the recent paper \cite{ni2018comparison} for comparison results under various pointwise but weaker types of curvature bounds.

In the first part of this paper, for each function $k(t)$, we obtain a corresponding Laplacian estimate, an area estimate for geodesic spheres, as well as a volume estimate for geodesic balls with no condition on the curvature on $M$ (in particular, no assumption on the lower bound of the Ricci curvature), as long as we stay within the cut locus or the injectivity radius. These estimates lead to comparison for Laplacian, area or volume under some weak integral curvature assumptions.
In the Riemannian case, the function $ k(t)$ can be thought of as the Ricci curvature of the model warped product space in the radial direction. The flexibility of choosing $ k(t)$ allows us to obtain for example a fairly general version of Bonnet-Myers theorem (Theorem \ref{thm: bonnet myers}) and its K\"ahler analogue (Theorem \ref{thm: cpx myers}). The main argument relies on the second variational formula with a critical use of the index lemma. Compared to the approach of using ODE analysis, this approach is often more ``linear'' as it avoids estimating the solution of some nonlinear Riccati type differential inequality (cf. \cite{eschenburg1990comparison}).

For example, the first result we will prove is the following Laplacian estimate:
\begin{theorem*}[Theorem \ref{thm: est}]
Let $x=(r, \theta)$ in geodesic polar coordinates centered at $p$.
If $ s_k> 0$ on $(0,r]$, then
\begin{align*}\label{ineq: lap d1}
\Delta r(x)
\le (n-1)\frac{ s_k'(r)}{ s_k(r)}-\int_{0}^{r}\widehat{\mathrm{Ric}}_k \left(\frac{s_k(t)}{s_k(r)}\gamma_\theta'(t) \right) dt.
\end{align*}
If $s_k>0$ on $(0, \sup d_p)$, this also holds in the sense of distribution if the second term on R.H.S. is interpreted properly.
\end{theorem*}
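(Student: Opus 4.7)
The plan is to use the standard formula $\Delta r(x)=\sum_{i=1}^{n-1} I_r(J_i,J_i)$, where the $J_i$ are the Jacobi fields along the minimizing radial geodesic $\gamma_\theta\colon[0,r]\to M$ satisfying $J_i(0)=0$ and $J_i(r)=e_i$ for an orthonormal basis $\{e_i\}$ of $(\gamma_\theta'(r))^{\perp}\subset T_xM$, and $I_r$ is the index form on $[0,r]$. The index lemma then yields
\begin{equation*}
\Delta r(x)\ \le\ \sum_{i=1}^{n-1} I_r(V_i,V_i)
\end{equation*}
for any piecewise smooth vector fields $V_i$ along $\gamma_\theta$ with $V_i(0)=0$, $V_i(r)=e_i$, and $V_i\perp\gamma_\theta'$. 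This reduces the proof to a clever choice of $V_i$ followed by a computation.

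For that test field, I would take $V_i(t)=\dfrac{s_k(t)}{s_k(r)}E_i(t)$, where $E_i$ is the parallel transport of $e_i$ along $\gamma_\theta$; the hypothesis $s_k>0$ on $(0,r]$ guarantees this is well defined and the boundary conditions hold. Plugging into the index form,
\begin{equation*}
\sum_i I_r(V_i,V_i)=\int_0^r\!\left[(n-1)\frac{(s_k'(t))^2}{s_k(r)^2}-\frac{s_k(t)^2}{s_k(r)^2}\,\mathrm{Ric}(\gamma_\theta',\gamma_\theta')\right]dt,
\end{equation*}
since the $E_i$ form an orthonormal frame of $(\gamma_\theta')^{\perp}$ and the trace of the sectional curvature gives Ricci in the radial direction.

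The final step is to rewrite the first term using the model ODE $s_k''+k\,s_k=0$. An integration by parts gives
\begin{equation*}
\int_0^r (s_k'(t))^2\,dt=s_k(r)s_k'(r)+\int_0^r k(t)\,s_k(t)^2\,dt,
\end{equation*}
so the right-hand side above becomes
\begin{equation*}
(n-1)\frac{s_k'(r)}{s_k(r)}-\int_0^r\frac{s_k(t)^2}{s_k(r)^2}\bigl[\mathrm{Ric}(\gamma_\theta',\gamma_\theta')-(n-1)k(t)\bigr]\,dt,
\end{equation*}
which by the definition of $\widehat{\mathrm{Ric}}_k$ and its quadratic scaling in $|v|$ is exactly the asserted inequality.

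The only genuinely delicate point is the distributional statement beyond the cut locus. There I would invoke Calabi's trick: for any nonnegative test function, approximate $r$ near a cut point by a smooth upper barrier obtained by translating the base point slightly along the geodesic, so that the pointwise bound extends to a distributional inequality, with the integral on the right interpreted via the almost-everywhere defined radial velocity $\gamma_\theta'(t)$ along the unique minimizer from $p$ to a.e.\ point.
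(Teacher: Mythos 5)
Your proof of the pointwise inequality is correct and follows essentially the same route as the paper: the identity $\Delta r=\sum_i I_r(J_i,J_i)$ (Proposition \ref{prop: hess} with $\Sigma=\{p\}$, via $(\log F)'=\sum_i I(Y_i,Y_i)$), the index lemma, and the test fields $\frac{s_k(t)}{s_k(r)}E_i(t)$; the paper integrates the index form by parts directly using $X_i''=-kX_i$, whereas you first expand $|V_i'|^2$ and then integrate $\int_0^r (s_k')^2$ by parts, which is the same computation in a different order. The only genuine divergence is the distributional statement: you invoke Calabi's barrier trick, while the paper multiplies the pointwise inequality by a test function and integrates in polar coordinates over the segment domain following Li's lecture notes, exploiting the favorable sign of the boundary term at the cut distance. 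Both are standard, but note that passing from a barrier-sense to a distributional-sense inequality is usually stated for a continuous (or upper semicontinuous) right-hand side, and here the term $\int_0^r\widehat{\mathrm{Ric}}_k\left(\frac{s_k(t)}{s_k(r)}\gamma_\theta'(t)\right)dt$ is only defined almost everywhere and can jump across the cut locus; the polar-coordinate argument sidesteps this issue entirely, so if you pursue Calabi's trick you would need to address that regularity point explicitly.
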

Here $\gamma_\theta$ is the geodesic with initial vector $\theta$, $s_k''(t)+k(t)s_k(t)=0$ with $s_k(0)=0$, $s_k'(0)=1$ for a continuous but otherwise arbitrary function $k(t)$, and $\widehat {\mathrm{Ric}}_k(v)=\mathrm{Ric}(v, v)-(n-1)k(t)g(v,v)$ for $v\in T_{\gamma_\theta(t)}M$.
We remark that it is natural to allow $k$ to be non-constant when we consider the existence of conjugate points, see Proposition \ref{prop: conj}.

The above estimate is the starting point of the later results, such as the quantitative relative area/volume comparison theorems, Bonnet-Myers theorem and Cheng's eigenvalue estimate.
Note that there is no curvature assumption. Indeed, a feature of our results is that there is an explicit appearance of the curvature term in our estimates (Theorems \ref{thm: est}, \ref{thm: area vol est}, \ref{thm: submfd}, \ref{thm: kahler}, \ref{thm: cpx submfd}, \ref{thm: gunther1}, \ref{thm: gunther area vol est}, \ref{thm: gunther submfd}, \ref{thm: cpx gunther1}, \ref{thm: cpx gunther area vol est}, \ref{thm: cpx gunther submfd}), which is independent of any curvature condition. On the other hand, our results also give sharper estimates. E.g. we prove the monotonicity for the volume ratio of geodesic balls:
\begin{theorem*}[Theorem \ref{thm: area vol est}]
If $s_k>0$ on $ (0, r]$, then
\begin{align*}
\frac{d}{dr}\left(\frac{|B_g(r, p)|} {|B_{\overline g}(r)|} \right)
\le -\frac{s_k(r)^{n-1}}{|B_{\overline g}(r)|^2} \int_{0}^{r} \frac{|B_{\overline g}(u)|}{s_k(u)^{n+1}} \int_{B_g(u, p)}\widehat{\mathrm{Ric}}_k ({s_k(t)}\partial_t ) dV\, du.
\end{align*}
The equality holds if and only if $B_g(r, p)$ is isometric to $ B_{\overline g}(r)$, where $\overline g=dt^2+s_k(t)^2 g_{\mathbb S^{n-1}}$.

In particular, if
$\displaystyle \int_{B_g(u, p)} \widehat{\mathrm{Ric}}_k \left(s_k(t) \partial_t \right) dV\ge 0$
for all $u\in (0, r)$, then $\frac{| B_g(u, p)|}{| B_{\overline g}(u)|}$ is non-increasing on $(0, r)$.
\end{theorem*}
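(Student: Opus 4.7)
The plan is to promote the pointwise Laplacian upper bound of Theorem~\ref{thm: est} to an integrated comparison of the volume ratio. Let $A(r)=|\partial B_g(r,p)|$, $V(r)=|B_g(r,p)|$, with model counterparts $\overline A(r)=c_{n-1}s_k(r)^{n-1}$ and $\overline V(r)$, and let $J(r,\theta)$ be the area density in geodesic polar coordinates at $p$. Put $\Phi(u):=\int_{B_g(u,p)}\widehat{\mathrm{Ric}}_k(s_k(t)\partial_t)\,dV$, and note that $(V/\overline V)'(r)=H(r)/\overline V(r)^2$ for $H(r):=A(r)\overline V(r)-V(r)\overline A(r)$. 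The theorem amounts to showing $H(r)\le R(r)$, where $R(r):=-s_k(r)^{n-1}\int_0^r\overline V(u)\Phi(u)/s_k(u)^{n+1}\,du$.

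The first step is to reduce to an area-level inequality. A short computation using $\overline A'/\overline A=(n-1)s_k'/s_k$ gives the identity $(H/s_k^{n-1})'(r)=\overline V(r)\cdot(A/s_k^{n-1})'(r)$, while differentiating the definition of $R$ yields $(R/s_k^{n-1})'(r)=-\overline V(r)\,\Phi(r)/s_k(r)^{n+1}$. Since $(H/s_k^{n-1})(0)=(R/s_k^{n-1})(0)=0$, the claim follows once one establishes
\[
\frac{d}{dr}\!\left(\frac{A(r)}{s_k(r)^{n-1}}\right) \le -\frac{\Phi(r)}{s_k(r)^{n+1}}.
\]

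To prove this area comparison I would apply Theorem~\ref{thm: est}. Using $\partial_r J=J\Delta r$, that pointwise bound rewrites as the radial Sturm--Liouville ODI
\[
\partial_r\!\left(\frac{J(r,\theta)}{s_k(r)^{n-1}}\right) \le -\frac{J(r,\theta)}{s_k(r)^{n+1}}\int_0^r s_k(t)^2\,\widehat{\mathrm{Ric}}_k(\gamma_\theta'(t))\,dt,
\]
and integrating in $\theta\in\mathbb S^{n-1}$ gives $(A/s_k^{n-1})'(r)$ on the left. On the right one obtains a double radial integral, to which I would apply Fubini and an integration by parts against $s_k(t)^2 k(t)=-s_k(t)s_k''(t)$; the purpose of this maneuver is to move the outer Jacobian $J(r,\theta)$ to an inner $J(t,\theta)$, so that the double integral reassembles as the ball integral $\Phi(r)/s_k(r)^{n+1}$ and $dV=J(t,\theta)\,dt\,d\theta$ reappears.

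For equality both directions are clear: if $B_g(r,p)\cong B_{\overline g}(r)$ then every quantity agrees with the model and equality is automatic, while conversely equality throughout the chain forces equality in Theorem~\ref{thm: est} along every radial direction, which by the standard index-lemma analysis pins down each orthogonal Jacobi field as $s_k(t)/s_k(r)$ times a parallel frame, identifying $(B_g(r,p),g)$ with $(B_{\overline g}(r),dt^2+s_k(t)^2 g_{\mathbb S^{n-1}})$. The monotonicity corollary is then immediate: if $\Phi(u)\ge 0$ for $u\in(0,r)$, then $R(r)\le 0$, so $(V/\overline V)'(r)\le 0$. The main technical hurdle is the Fubini/integration-by-parts step in the third paragraph: the pointwise Laplacian estimate naturally carries a weight $J(r,\theta)$ at the outer radius, whereas $\Phi(r)$ demands $J(t,\theta)$ at the inner radius, and exchanging these is where the Sturm--Liouville structure of $s_k$ must be used in an essential way.
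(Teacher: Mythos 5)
Your reduction of the volume inequality to an area--level inequality is correct and is in substance the paper's own: the identity $(H/s_k^{n-1})'=\overline V\cdot(A/s_k^{n-1})'$ is Lemma \ref{lem: dec2} applied with $f=A$, $g=\overline A$, and your radial Sturm--Liouville ODI for $J/s_k^{n-1}$ is a restatement of \eqref{ineq: lap d1}. The gap is exactly where you place it, in the third paragraph, and your proposed fix does not close it. Integrating the pointwise bound over $\theta$ yields
\[
\Bigl(\tfrac{A}{s_k^{n-1}}\Bigr)'(r)\;\le\;-\frac{1}{s_k(r)^{n+1}}\int_{\mathbb S^{n-1}}J(r,\theta)\,G(r,\theta)\,d\theta,
\qquad G(r,\theta):=\int_0^r s_k(t)^2\,\widehat{\mathrm{Ric}}_k(\gamma_\theta'(t))\,dt,
\]
whereas the theorem needs the same bound with $\Phi(r)=\int_{\mathbb S^{n-1}}\int_0^r G'(t,\theta)\,J(t,\theta)\,dt\,d\theta$ on the right, i.e.\ with the Jacobian evaluated at the \emph{inner} radius. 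No Fubini or integration by parts against $s_k''=-ks_k$ can effect this exchange: $J(t,\theta)$ is the volume density of $(M,g)$ itself, it satisfies no relation expressible through $s_k$ alone, and it cannot be manufactured from an integrand that does not already contain it. Worse, the inequality you would need, $\int J(r,\theta)G(r,\theta)\,d\theta\ge\Phi(r)$, is false in general: on the round unit sphere with $k=0$ one has $J(t,\theta)=\sin^{n-1}t$ and $G(r,\theta)=(n-1)r^3/3$, so the left side is $O(\sin^{n-1}r)\to 0$ as $r\to\pi^-$ while $\Phi(r)\to(n-1)|\mathbb S^{n-1}|\int_0^\pi t^2\sin^{n-1}t\,dt>0$. (The final statement of the theorem can still be checked directly for the sphere, so it is the route, not necessarily the result, that breaks.)

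You should be aware that the paper's own proof performs the same exchange silently, by writing $\int_{S_g(r,p)}\bigl(\int_0^r\cdots\,dt\bigr)dS$ and $\int_{B_g(r,p)}\cdots\,dV$ as equal in \eqref{ineq: A'2} (and again in \eqref{ineq: radial2}); these differ precisely by substituting the outer area element $F(r,\theta)\,d\theta$ for the inner one $F(t,\theta)\,d\theta$. So your proposal faithfully reproduces the paper's argument and correctly isolates its crux, but as written neither argument establishes the stated bound with the genuine ball integral $\int_{B_g(u,p)}\widehat{\mathrm{Ric}}_k(s_k(t)\partial_t)\,dV$; what is actually proved is the version with the sphere--weighted error $\int_{S_g(u,p)}\bigl(\int_0^u\widehat{\mathrm{Ric}}_k(\tfrac{s_k(t)}{s_k(u)}\partial_t)\,dt\bigr)dS$. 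The equality discussion and the monotonicity corollary are fine modulo this point, but the corollary's hypothesis would then have to be imposed on the sphere--weighted quantity rather than on $\int_{B_g(u,p)}\widehat{\mathrm{Ric}}_k(s_k(t)\partial_t)\,dV$.
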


This can be regarded as a quantitative version of the Bishop-Gromov volume comparison theorem. This result makes the defect of the volume of the geodesic ball to that of the standard one easier to measure. Our approach to area and volume estimates is also quite robust in the sense that it can be easily adapted to estimate other similar area or volume-type integrals, such as those with weight.

The rest of this paper is as follows. In Section \ref{sec: riem}, we derive quantitative versions of various comparison theorems on a Riemannian manifold, first for distance from a fixed point and then a submanifold. A general Bonnet-Myers' theorem will also be derived. Then in Section \ref{sec: kahler}, we extend the results to K\"ahler manifolds. This is not a direct application of the results in Section \ref{sec: riem} because of the special geometry of K\"ahler manifolds. For our purpose we introduce the notion of $\ell$-holomorphic sectional curvatures, which is the K\"ahler version of the $\ell$-sectional curvatures defined in \cite{li2005comparison}. In Section \ref{sec: gunther}, we prove quantitative and relative G\"unther-type results in Riemannian and K\"ahler manifolds, i.e. lower bound for the volume of tubes around a submanifold. In the course, we will see that there is a curvature quantity, expressed explicitly in Fermi coordinates, which is analogous to the role of Ricci curvature in Bishop-Gromov comparison theorem, see Theorem \ref{thm: gunther1}. Finally, we give some applications in Section \ref{sec: applications}, such as Cheng's eigenvalue estimate (Theorem \ref{thm: Cheng first thm}) assuming only a lower bound on the integral of some curvature quantities on subsets of metric balls.

In the future, we plan to investigate the Lorentzian analogue of these results under some weak energy conditions, which is of physical interest to understand singularity theorems in general relativity. It also seems plausible that these results can be extended to quaternionic K\"ahler manifolds, which we do not do here for simplicity.

{\sc Acknowledgments}:
We would like to thank Hojoo Lee, Man-Chun Lee, Miles Simon and Ye-Kai Wang for useful discussions. The research of the author is partially supported by Ministry of Science and Technology in Taiwan under grant MOST 106-2115-M-006-017-MY2.

\section{Comparison results in Riemannian manifolds}\label{sec: riem}
\subsection{Notions and preliminaries}
Let us explain our notation. Throughout this paper, all manifolds and submanifolds are assumed to be complete, connected and orientable unless specified otherwise. Let $(M, g)$ denotes an $ n$-dimensional Riemannian manifold. Let $ k(t)$
be a continuous function on an interval $I$ containing $ 0$
and $s_k(t)$ be the solution to the equation
\begin{equation}\label{eq: s}
s_k''(t)=-k(t) s_k(t), \quad s_k(0)=0, \quad s_k'(0)=1.
\end{equation}

Often, we will compare $(M, g)$ with the ``model space'' defined as the warped product manifold $ (\overline M=[0, r_0)\times \mathbb S^{n-1}, \overline g)$, where $ \overline g=dt^2+s_k(t)^2 g_{\mathbb S^{n-1}}$ and $ g_{\mathbb S^{n-1}}$ is the standard round metric on the unit sphere $ \mathbb S^{n-1}$. Of course, for $ (\overline M, \overline g)$ to be truly a Riemannian manifold we at least require $ s_k>0$ on $ (0, r_0)$. However, to be flexible we do not want to impose any condition now and the assumption on $s_k$ will be stated in the results.

It is easy to see that the Ricci curvature of $\overline g$
in the radial direction is $\overline {\mathrm{Ric}}(\partial_t, \partial_t)=-(n-1)\frac{s_k''(t)}{s_k(t)}=(n-1)k(t)$.
When $k$ is constant then
\begin{align*}\label{eq: sk}
s_k(t)=
\begin{cases}
\frac{1}{\sqrt{k}} \sin \left(\sqrt{k}t\right)\quad &\textrm{ if }k>0\\
t\quad &\textrm{ if }k=0\\
\frac{1}{\sqrt{-k}}\sinh \left(\sqrt{-k}t\right)\quad &\textrm{ if }k<0.
\end{cases}
\end{align*}

Fix $p\in M$ for the moment.
Let $\gamma_\theta(t)$ be the geodesic starting from $ p$ with initial vector $\theta\in S_pM =\{\theta\in T_pM : |\theta|=1\}$. We define
\begin{equation*}
\widehat{\mathrm{Ric}}_k(u):= \mathrm{Ric}(u, u)-(n-1)k(t) g_{\gamma_\theta(t)}(u, u)
\end{equation*}
for $u\in T_{\gamma_\theta(t)}M$.

Of course, the function $\widehat{\mathrm{Ric}}_k$ depends on the direction $ u\in T_xM$. If $k$ is constant and a scalar function is preferred, we can define $ \displaystyle \widehat {\underline{\mathrm{Ric}}}_k(x):=\min_{u\in S_xM}\widehat{\mathrm{Ric}}_k(u)$. Then clearly, $ \widehat{\mathrm{Ric}}_k(u)\ge \widehat{\underline{\mathrm{Ric}}}_k(x) $ for all $ u\in S_xM$. For  results in Section \ref{sec: riem}, we can replace conditions involving $ \widehat{\mathrm{Ric}}_k$ by  $ \widehat{\underline{\mathrm{Ric}}}_k\left(\gamma(t)\right)$. Comparison theorems involving (the negative part of) $\widehat{\underline{\mathrm{Ric}}}_k$ are given in \cite{gallot1988isoperimetric}, \cite{petersen1997relative}, \cite{peterson1997comparison}, which use a different approach to obtain estimates. Our approach is more direct, and take into account both the positive and negative part of $\widehat {\mathrm{Ric}}_k$.

For later use, we also need the notion of some ``average'' curvature with strength lying between the sectional curvature and the Ricci curvature.
Let $K(w, v)=\frac{\langle R(w, v)v, w\rangle}{|w\wedge v|^2}$ be the sectional curvature of the plane spanned by $ w$ and $ v$. As in \cite{li2005comparison}, the $ \ell$-sectional curvature is defined by $ K^\ell(W, v):=\sum_{i=1}^\ell K(e_i, v)$ where $ v$ is non-zero, $ W$ is an $ \ell$-dimensional subspace orthogonal to $ v$ and $ e_i$ is an orthonormal basis of $ W$. By convention, $K^0=0$. Along a given geodesic $\gamma$, define also
\begin{equation*}\label{eq: Kl}
\widehat {K^\ell}_k(W, v):=K^\ell(W, v)-\ell k(t)g(v,v)
\end{equation*}
for $v\in T_{\gamma(t)}M$ and $W$ an $\ell$-dimensional subspace of $T_{\gamma(t)}M$ orthogonal to $v$.

In particular, if $\ell=n-1$, they reduce to the Ricci curvature and $\widehat{\mathrm{Ric}}_k$ respectively. For comparison theorem of the Laplacian of the distance from a point or a hypersurface, the notion of $ \widehat{\mathrm{Ric}}_k$ is enough. But for comparing distance function from an $ \ell$-dimensional submanifold, the curvature $ \widehat {K^\ell}_k$ and $\widehat K^{n-1-\ell}_k$ are involved. Again, if a scalar function is needed, then we can always replace $ \widehat K^\ell_k$ by $ \widehat {\underline{K}^\ell}_k(x):=\min \widehat {K^\ell}_k(W, v)$ where the minimum is taken over the set $ \{(W, v): v\in S_xM, W<v^\perp \textrm{ is $\ell $-dimensional}\}$.

Recall that a Jacobi field $Y(t)$ along a normal geodesic $ \gamma$ orthogonal to $ \Sigma$ is said to be adapted to a submanifold $ \Sigma$ if $ Y(0)\in T\Sigma$ and $ Y'(0)-A_{\gamma'(0)} Y(0)\in N\Sigma$. If $ \Sigma$ is a point, the initial condition is just $ Y(0)=0$.
A standard but useful fact is the following (cf. e.g. \cite[p. 3]{schoen1994lectures}).
\begin{proposition}\label{prop: hess}
Let $\Sigma$ be an embedded submanifold of $ M$ and let $ r=d_\Sigma: M\to \mathbb R$ be the distance from $ \Sigma$ on $ M$.
If $x$ lies within the cut locus of $ \Sigma$ and $ X\in \nabla r(x) ^\perp$, then
\begin{align*}
\nabla ^2 r(X, X)=&\int_{0}^{r} \left(|Y'(t)|^2-\langle R(Y, \partial_r)\partial_r, Y\rangle\right) dt +A_{\partial_r}(Y(0), Y(0))\\
=&:I_\Sigma(Y, Y)
\end{align*}
where $Y$ is the $ \Sigma$-adapted Jacobi field along the minimizing normal geodesic $ \gamma$ emanating from $ \Sigma$, satisfying $ Y(r)=X$. Here $ A$ is the second fundamental form defined by $ A_v(X, Y)=\langle v, -\nabla_XY\rangle $ and $ I_\Sigma$ is the index form with respect to $ \Sigma$. (If $ \Sigma$ is a point, then $ A=0$.)
\end{proposition}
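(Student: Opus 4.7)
The plan is to exhibit a geodesic variation whose variation field is the desired $\Sigma$-adapted Jacobi field $Y$, use it to compute $\nabla_X \nabla r$ at $x$, and then match the resulting boundary expression against the index form via integration by parts.

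First I would set up the variation. Since $x$ lies within the cut locus of $\Sigma$, the distance function $r=d_\Sigma$ is smooth on a neighborhood of $x$ and $\nabla r$ is a unit vector field whose integral curves are exactly the minimizing normal geodesics from $\Sigma$. Pick a smooth family $\sigma(s)\subset \Sigma$ with $\sigma(0)=\gamma(0)$ together with a smooth family of unit normals $\nu(s)\in N_{\sigma(s)}\Sigma$ with $\nu(0)=\gamma'(0)$, chosen so that the variation field $V(t):=\partial_s c(s,t)|_{s=0}$ of $c(s,t)=\exp_{\sigma(s)}(t\,\nu(s))$ is a Jacobi field along $\gamma$ with $V(0)=\sigma'(0)\in T\Sigma$, $V'(0)-A_{\gamma'(0)}V(0)\in N\Sigma$, and $V(r)=X$. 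Such initial data exist because the space of $\Sigma$-adapted Jacobi fields has the same dimension as $\gamma'(r)^\perp$, and the evaluation-at-$r$ map is an isomorphism precisely because $\gamma|_{[0,r]}$ carries no focal point (being within the cut locus).

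Next I would identify $\nabla^2 r(X,X)$ with a boundary value of $Y$. Because each $\gamma_s(\cdot)=c(s,\cdot)$ is a minimizing normal geodesic from $\Sigma$ for small $s$, one has $r(\gamma_s(t))=t$, so $\nabla r$ along $\gamma_s$ equals $\partial_t c(s,t)$. Differentiating in $s$ at the point $x=\gamma(r)$ along the curve $s\mapsto c(s,r)$ (whose velocity at $0$ is $X$) and using $\nabla_s\partial_t c=\nabla_t\partial_s c$ gives
\begin{equation*}
\nabla_X\nabla r\big|_x = \nabla_s\partial_t c\big|_{(0,r)} = \nabla_t\partial_s c\big|_{(0,r)} = Y'(r),
\end{equation*}
so $\nabla^2 r(X,X)=\langle \nabla_X\nabla r,X\rangle = \langle Y'(r),Y(r)\rangle$.

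Finally I would evaluate $I_\Sigma(Y,Y)$. Using the Jacobi equation $Y''+R(Y,\partial_r)\partial_r=0$, the integrand rewrites as $\tfrac{d}{dt}\langle Y',Y\rangle$, so
\begin{equation*}
\int_0^r\bigl(|Y'|^2-\langle R(Y,\partial_r)\partial_r,Y\rangle\bigr)dt=\langle Y'(r),Y(r)\rangle-\langle Y'(0),Y(0)\rangle.
\end{equation*}
The $\Sigma$-adapted condition says $Y'(0)-A_{\partial_r}Y(0)\in N\Sigma$ while $Y(0)\in T\Sigma$, whence $\langle Y'(0),Y(0)\rangle=\langle A_{\partial_r}Y(0),Y(0)\rangle=A_{\partial_r}(Y(0),Y(0))$ (interpreting $A_{\partial_r}$ as a shape operator via the sign convention of the proposition). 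The $A_{\partial_r}$ boundary contribution then cancels, yielding $I_\Sigma(Y,Y)=\langle Y'(r),Y(r)\rangle$, which matches the formula for $\nabla^2 r(X,X)$ obtained above.

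The only delicate point — and the one I expect to be the main obstacle in writing things out carefully — is justifying that $r(\gamma_s(t))=t$ for all $(s,t)$ in a small neighborhood of $(0,r)$ (so that $\partial_t c$ really equals $\nabla r$ along the variation). This is where the hypothesis that $x$ lies within the cut locus is essential; openness of the non-cut-locus set combined with continuity of the family $\gamma_s$ gives it.
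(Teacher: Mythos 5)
Your proof is correct, and it is the standard argument: the paper does not prove this proposition at all but cites it as a known fact (cf.\ Schoen--Yau), so there is no in-paper proof to diverge from. Your three steps --- realizing $Y$ as the variation field of a family of minimizing normal geodesics from $\Sigma$, deducing $\nabla_X\nabla r=Y'(r)$ from $r(c(s,t))=t$ and the symmetry lemma, and cancelling the $A_{\partial_r}$ boundary term against $\langle Y'(0),Y(0)\rangle$ via the adaptedness condition --- are exactly how this identity is established, and the point you flag as delicate (that nearby normal geodesics remain minimizing, which holds by openness of the complement of the cut locus) is handled correctly.
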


We end this subsection with an extension of some familiar facts in trigonometry.
Let $c_k(t)$ be the solution to
\begin{align}\label{eq: c}
c_k''(t)=-k(t)c_k(t), \quad c_k(0)=1, \quad c_k'(0)=0.
\end{align}
Again, when $k$ is contant, then
\begin{align*}\label{eq: ck}
c_k(t)=
\begin{cases}
\frac{1}{\sqrt{k}} \cos \left(\sqrt{k}t\right)\quad &\textrm{ if }k>0\\
1\quad &\textrm{ if }k=0\\
\frac{1}{\sqrt{-k}}\cosh \left(\sqrt{-k}t\right)\quad &\textrm{ if }k<0.
\end{cases}
\end{align*}
To further simplify notation, we define $\mathrm{ct}_k (r)=\frac{c_k(r)}{s_k(r)}$ and $\mathrm{tg}_k (r)=\frac{s_k(r)}{c_k(r)}$.
\begin{lemma}
\label{lem: aux}
\noindent
\begin{enumerate}
\item\label{item1}
$s_k(t)c_k'(t)-c_k(t)s_k'(t)=-1$.
\item
$\mathrm{ct}_k'(t)=-\frac{1}{s_k(t)^2}$. In particular, $\mathrm{ct}_k$ is decreasing on any interval contained in its domain.
\item
$\mathrm{tg}_k'(t)=\frac{1}{c_k(t)^2}$. In particular, $\mathrm{tg}_k$ is increasing on any interval contained in its domain.
\end{enumerate}
\end{lemma}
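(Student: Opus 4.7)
The plan is to treat this as a standard Wronskian argument for two linearly independent solutions of the same linear second order ODE, and then read off parts (2) and (3) as immediate consequences of the quotient rule.

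First, for (\ref{item1}), I would introduce the Wronskian
\begin{equation*}
W(t) := s_k(t) c_k'(t) - c_k(t) s_k'(t)
\end{equation*}
and differentiate. Using $s_k'' = -k s_k$ and $c_k'' = -k c_k$ from \eqref{eq: s} and \eqref{eq: c}, the mixed terms in $W'(t) = s_k' c_k' + s_k c_k'' - c_k' s_k' - c_k s_k''$ collapse to $-k(t) s_k(t) c_k(t) + k(t) c_k(t) s_k(t) = 0$. Hence $W$ is constant, and evaluating at $t=0$ via the initial conditions $s_k(0) = 0, s_k'(0) = 1, c_k(0) = 1, c_k'(0) = 0$ gives $W(0) = -1$.

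For (2) and (3) the quotient rule together with (\ref{item1}) does everything. Specifically,
\begin{equation*}
\mathrm{ct}_k'(t) = \frac{c_k'(t) s_k(t) - c_k(t) s_k'(t)}{s_k(t)^2} = \frac{W(t)}{s_k(t)^2} = -\frac{1}{s_k(t)^2},
\end{equation*}
and symmetrically $\mathrm{tg}_k'(t) = \dfrac{s_k'(t) c_k(t) - s_k(t) c_k'(t)}{c_k(t)^2} = -\dfrac{W(t)}{c_k(t)^2} = \dfrac{1}{c_k(t)^2}$. The monotonicity assertions follow from the signs of these derivatives on any interval where $s_k$ (resp.\ $c_k$) is nonzero, so that $\mathrm{ct}_k$ (resp.\ $\mathrm{tg}_k$) is actually defined.

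There is no real obstacle here: the only thing to be careful about is noting that the formulas for $\mathrm{ct}_k'$ and $\mathrm{tg}_k'$ are only asserted on intervals where the respective denominators do not vanish, which is exactly what the phrase ``on any interval contained in its domain'' builds in.
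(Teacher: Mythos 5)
Your proposal is correct and is essentially identical to the paper's own argument: the paper also differentiates the Wronskian $s_k c_k' - c_k s_k'$, uses the shared ODE to see it is constant, evaluates at $t=0$ to get $-1$, and derives (2) and (3) by the quotient rule. Nothing is missing.
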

\begin{proof}
By definition,
$(s_k c_k'-c_ks_k')'=s_kc_k''-c_ks_k''=-ks_kc_k+ks_kc_k=0$ and so $ s_kc_k'-c_ks_k'$ is a constant, which is equal to $-1$ by the initial conditions. The rest follows from this.
\end{proof}

\subsection{Comparison theorems for distance from a point}\label{subsect: Riem}

Using geodesic polar coordinates centered at a fixed point $p\in M$, within the cut locus of $p$, the volume element on $M$ can be expressed as $ dV=F(r, \theta)dr d\theta$, where $\theta\in S_pM\cong \mathbb S^{n-1}$ and $ d\theta$ is the volume element of $ \mathbb S^{n-1}$. In this subsection ($ \overline F$ will change in later subsections), we let
\begin{equation*}
\overline F(r)=s_k(r)^{n-1},
\end{equation*}
which is the corresponding volume density of $(\overline M=[0, r_0)\times \mathbb S^{n-1}
, \overline g=dt^2+s_k(t)^2 g_{\mathbb S^{n-1}})$ in polar coordinates.
Let $d_p=d(p, \cdot)$ be the distance function on $ M$. We use $ '$ to denote partial derivative with respect to the radial direction $ r$. E.g. $ F'(r, \theta)=\frac{\partial F}{\partial r}(r, \theta)$.

Strictly speaking, Theorem \ref{thm: est} below is a special case of Theorem \ref{thm: submfd}. However, we choose to present it here not only because of simpler presentation, but also because it will be useful later, and is indeed one of the steps in the proof of Theorem \ref{thm: submfd}.
\begin{theorem} \label{thm: est}
\noindent
\begin{enumerate}
\item\label{est1}
Let $x=(r, \theta)$ in geodesic polar coordinates.
Assume there is no cut point of $p$ along $ \gamma_\theta$ on $ [0, r]$. If $ s_k(r)\ne 0$, then
\begin{align}\label{ineq: lap d1}
\Delta d_p(x)
=\frac{F'(r, \theta)}{F(r, \theta)}
\le \frac{\overline F'(r)}{\overline F(r)}-\int_{0}^{r}\widehat{\mathrm{Ric}}_k \left(\frac{s_k(t)}{s_k(r)}\gamma_\theta'(t) \right) dt.
\end{align}
\item
Assume $s_k>0$ on $ (0, \sup_M d_p)$,
then \eqref{ineq: lap d1} also holds in the sense of distribution. i.e. for $0\le f\in C^\infty_c(M)$ and $ r=d_p$,
\begin{align*}
\int_M r \Delta f\le \int_M f
\left[\frac{\overline F'(r)}{\overline F(r)}-\int_{0}^{r} \widehat{\mathrm{Ric}}_k\left(\frac{s_k(t)}{s_k(r)}\partial_t\right)dt\right].
\end{align*}
(Note that $\int_{0}^{r}\widehat{\mathrm{Ric}}_k\left(\frac{s_k(t)}{s_k(r)}\partial_t\right)dt$ is well-defined almost everywhere as a function on $ M$.)

\item\label{est2}
Assume there is no cut point of $p$ along $ \gamma_\theta$ on $ [0, r]$.
If $s_k>0$ on $ (0, r]$, then
\begin{equation*}
\label{ineq: F1}
\begin{split}
F(r, \theta)\le&
\exp \left[-\int_{0}^{r}\int_{0}^{\rho} \widehat{\mathrm{Ric}}_k \left(\frac{s_k(t)}{s_k(\rho)}\gamma_\theta'(t) \right) dt \, d\rho \right] \overline F(r)\\
=& \exp \left[-\int_{0}^{r} \left(\mathrm{ct}_k(t)-\mathrm{ct}_k(r)\right) \widehat{\mathrm{Ric}}_k\left(s_k(t)\gamma_\theta'(t)\right)dt
\right] \overline F(r).
\end{split}
\end{equation*}
(Note that $\mathrm{ct}_k(t)-\mathrm{ct}_k(r)> 0$ by Lemma \ref{lem: aux}.)

If $k(t)=k$ is constant, then this inequality can also be expressed as
\begin{equation*}\label{eq: k const F}
F(r, \theta)\le\exp \left[-\frac{1}{s_k(r)}\int_{0}^{r} s_k(t)s_k(r-t) \widehat{\mathrm{Ric}}_k(\gamma_\theta'(t))dt\right] \overline F(r).
\end{equation*}

\end{enumerate}
\end{theorem}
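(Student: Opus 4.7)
My plan for part (\ref{est1}) is to combine Proposition \ref{prop: hess} with the index lemma. Write $\gamma = \gamma_\theta$, and let $\{E_i(t)\}_{i=1}^{n-1}$ be parallel orthonormal vector fields along $\gamma$, everywhere orthogonal to $\gamma'$. The test fields $V_i(t) := \frac{s_k(t)}{s_k(r)}\, E_i(t)$ are admissible for the index form (they vanish at $t=0$ and equal $E_i(r)$ at $t=r$), so Proposition \ref{prop: hess} plus the index lemma give $\Delta d_p(x) = \sum_i \nabla^2 r(E_i(r), E_i(r)) \le \sum_i I(V_i, V_i)$. Expanding the index form, using that the $E_i$ are parallel and summing, the bound becomes $\int_0^r \left[(n-1) \frac{s_k'(t)^2}{s_k(r)^2} - \frac{s_k(t)^2}{s_k(r)^2}\, \mathrm{Ric}(\gamma', \gamma')\right] dt$. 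An integration by parts in $\int_0^r s_k'(t)^2\, dt$, using $s_k'' = -k s_k$, extracts the boundary term $s_k(r) s_k'(r)$ and produces $\int_0^r k(t) s_k(t)^2\, dt$; after dividing by $s_k(r)^2$, the $k(t) s_k(t)^2$ contribution combines with the Ricci integrand to form precisely $\widehat{\mathrm{Ric}}_k\!\left(\frac{s_k(t)}{s_k(r)} \gamma'(t)\right)$, while the boundary term becomes the leading $\overline F'(r)/\overline F(r) = (n-1) s_k'(r)/s_k(r)$, matching \eqref{ineq: lap d1}.

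For the distributional version in part (2), I plan to apply the standard Calabi barrier argument: at a cut point $q$ of $p$, approximate $d_p$ from above by $f_\varepsilon(x) := \varepsilon + d(\gamma_\theta(\varepsilon), x)$ for a minimizing unit $\theta$ aimed at $q$. Since no cut point of $\gamma_\theta(\varepsilon)$ lies on the geodesic from $\gamma_\theta(\varepsilon)$ to $q$, part (\ref{est1}) applies classically to $f_\varepsilon$, and as $\varepsilon \downarrow 0$ the upper-bound integrand tends to the desired integrand by continuous dependence on the basepoint. Testing against $0 \le f \in C^\infty_c(M)$ and then integrating by parts, the cut locus contributes no boundary mass (it has measure zero and $d_p$ is Lipschitz), giving the claimed distributional inequality; the hypothesis $s_k > 0$ on $(0, \sup d_p)$ is what keeps the right-hand side finite almost everywhere. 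I expect this distributional step to be the main technical obstacle, but it reduces to a well-understood barrier argument.

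For part (\ref{est2}), since $F(r, \theta) > 0$ under the hypotheses, the inequality from (\ref{est1}) reads $(\log F)'(r, \theta) \le (\log \overline F)'(r) - \int_0^r \widehat{\mathrm{Ric}}_k\!\left(\tfrac{s_k(t)}{s_k(r)} \gamma'(t)\right) dt$. Integrating in $r$ from $0$ and using the asymptotic $F(\rho, \theta)/\overline F(\rho) \to 1$ as $\rho \to 0^+$ (both behave like $\rho^{n-1}$ in polar coordinates, by the standard Jacobi-field expansion at $p$) gives the first exponential bound. To obtain the second form, note that $\widehat{\mathrm{Ric}}_k\!\left(\tfrac{s_k(t)}{s_k(\rho)} \gamma'(t)\right) = \frac{1}{s_k(\rho)^2}\widehat{\mathrm{Ric}}_k(s_k(t) \gamma'(t))$; Fubini then swaps the order of integration to produce $\int_0^r \widehat{\mathrm{Ric}}_k(s_k(t) \gamma'(t))\!\int_t^r \frac{d\rho}{s_k(\rho)^2}\, dt$, and Lemma \ref{lem: aux}(2) evaluates the inner integral as $\mathrm{ct}_k(t) - \mathrm{ct}_k(r)$. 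Finally, when $k$ is constant, the addition formula $s_k(r - t) = s_k(r) c_k(t) - c_k(r) s_k(t)$ (a consequence of the ODE plus Lemma \ref{lem: aux}(\ref{item1}) and the uniqueness of solutions to \eqref{eq: s}) rewrites $\mathrm{ct}_k(t) - \mathrm{ct}_k(r) = s_k(r - t)/[s_k(t) s_k(r)]$, giving the third displayed form.
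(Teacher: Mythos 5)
Your parts (\ref{est1}) and (\ref{est2}) are essentially the paper's own proof: the paper also takes the index-lemma comparison $I(Y_i,Y_i)\le I(X_i,X_i)$ with $X_i=\frac{s_k(t)}{s_k(r)}E_i(t)$, the only cosmetic difference being that it integrates by parts to write $I(X_i,X_i)=\int_0^r(-\langle X_i'',X_i\rangle-\langle R(X_i,\gamma')\gamma',X_i\rangle)dt+\langle X_i(r),X_i'(r)\rangle$ rather than moving the derivative off $s_k'(t)^2$ as you do; both land on the same identity. Your Fubini/$\mathrm{ct}_k$ reduction and the compound-angle rewriting for constant $k$ in part (\ref{est2}) are exactly the paper's \eqref{eq: sing int}. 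Two small remarks: the theorem also asserts the identity $\Delta d_p=F'/F$ (which you use implicitly when you pass to $(\log F)'$ in part (\ref{est2})), so you should record the one-line justification via $\Delta f=\frac{1}{\sqrt{\det g}}\partial_i(\sqrt{\det g}\,g^{ij}\partial_j f)$ in polar coordinates, as the paper does; and since you invoke Proposition \ref{prop: hess} rather than the Heintze--Karcher formula \eqref{eq: log F}, you need this identity anyway to connect $\sum_i\nabla^2 r(E_i,E_i)$ to $F'/F$.

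For part (2) you take a genuinely different route: the paper multiplies the pointwise inequality by $fF$ in polar coordinates and integrates up to the cut distance $\mathrm{c}(\theta)$, following Li's lecture notes, whereas you propose the Calabi upper-barrier argument at cut points. That is a legitimate standard alternative (barrier sense implies distributional sense for the Laplacian), and your handling of the shifted basepoint by continuous dependence of $s_k$ on the data is fine even for variable $k$. However, your stated reason for the final step is wrong: the claim that ``the cut locus contributes no boundary mass because it has measure zero and $d_p$ is Lipschitz'' is not the right justification. The distributional Laplacian of $d_p$ does in general carry a nontrivial singular part concentrated on the cut locus (compare $\Delta(-|x|)$ on $\mathbb R$, which has a negative point mass on a measure-zero set); the inequality survives precisely because that singular part is a \emph{non-positive} measure, which is what either the barrier argument or the exhaustion argument actually delivers. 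You should replace the measure-zero remark by this sign statement (or by the boundary-term sign in the polar-coordinate integration by parts, as in the paper's cited source); otherwise the argument as written has a gap at exactly the point the distributional statement is designed to address.
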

\begin{proof}
\begin{enumerate}
\item
Let $e_1, e_2, \cdots, e_n= \theta$ be a positively oriented orthonormal basis of $ T_pM$ and $ E_i$ be the parallel translation of $ e_i$ along $ \gamma_\theta$. Define $ \{Y_i^{r, \theta}(t)\}_{i=1}^{n-1}$ to be the unique Jacobi fields along $ \gamma_\theta$ with $ Y_i^{r, \theta}(0)=0$ and $ Y_i^{r, \theta}(r)=E_i(r)$. For convenience we simply denote $ Y_i^{r, \theta}$ by $ Y_i$ and $ \gamma_\theta$ as $ \gamma$.

As $\langle Y_i(t), \gamma'(t)\rangle=0 $ at $ t=0$ and $ t=r$, we have $ \langle Y_i(t), \gamma'(t)\rangle \equiv 0$, i.e. $ Y_i(t)$ are tangential to $ S_t$.
Since $Y_i(t)=d\exp_p|_{t\theta}(t{Y_i}\, '(0))$, we see that the $(n-1)$-dimensional Jacobian satisfies
$F(t, \theta) =\frac{\det\left(Y_1(t), \cdots, Y_{n-1}(t)\right)}{\det\left({Y_1} '(0), \cdots, {Y_{n-1}} '(0)\right)} $.
Note that $F(t, \theta)$ depends on $ (t, \theta)$ only and is independent of $ r$ and $ Y_i$.
We have the formula (cf. \cite[p. 460]{heintze1978general})
\begin{equation}\label{eq: log F}
\begin{split}
\left(\log F \right)'(r, \theta)
=\left[\log \left(\det (Y_1, \cdots, Y_{n-1})\right)\right]'(r)
=&\sum_{i=1}^{n-1}\int_{0}^{r}\left(\langle {Y_i}', {Y_i}'\rangle -\langle R (Y_i, \gamma') \gamma', Y_i\rangle\right) dt\\
=&\sum_{i=1}^{n-1} I(Y_i, Y_i).
\end{split}
\end{equation}
Let
$X_i(t)=\frac{s_k(t)}{s_k(r)}E_i(t)$.
Then by the index lemma \cite[Ch. III, Lemma 2.10]{sakai1996riemannian}
\begin{equation}\label{ineq: index}
I(Y_i, Y_i) \le I(X_i, X_i).
\end{equation}
By integration by parts,
\begin{equation}\label{eq: IX}
\begin{split}
I(X_i, X_i)
=&\int_{0}^{r}\left(-\langle {X_i}'', X_i\rangle -\langle R (X_i, \gamma') \gamma', X_i\rangle \right)dt+\langle X_i(r), {X_i}'(r)\rangle\\
=&-\int_{0}^{r}\frac{s_k(t)^2}{s_k(r)^2} \widehat {K^1}_k(E_i, \gamma')dt+\frac{s_k'(r)}{s_k(r)}.
\end{split}
\end{equation}
Summing \eqref{eq: IX} on $i=1, \cdots, {n-1}$ and
combining with \eqref{eq: log F}, \eqref{ineq: index}, we have
\begin{align*}\label{ineq: lap d1}
\left(\log F\right)'(r, \theta)
\le&-\int_{0}^{r} \widehat{\mathrm{Ric}}_k \left(\frac{s_k(t)}{s_k(r)}\gamma_\theta'(t) \right) dt+ (\log \overline F'(r).
\end{align*}
Observe that $F=\sqrt{\det (g_{ij})}$ in polar coordinates, and using $ \Delta f=\frac{1}{\sqrt{\det (g_{ij})}}\partial_i (\sqrt{\det (g_{ij})}g^{ij}\partial_jf)$, we see
that $\Delta d_p(x)= \frac{F'(r, \theta)}{F(r, \theta)}$. So \eqref{est1} follows.
\item
Let $\phi(r, \theta)=\int_{0}^{r}\widehat{\mathrm{Ric}}_k\left(\frac{s_k(t)}{s_k(r)}\gamma_\theta'(t)\right)dt$, $\overline H(r)=\frac{\overline F'(r)}{\overline F(r)}$
and $\mathrm{c}(\theta)$ be the cut distance in the direction $\theta$. Then for $ r<\mathrm{c}(\theta)$, as $ \overline H(r)-\phi(r, \theta)-\Delta d_p \ge 0$,
\begin{equation*}\label{ineq: H-R}
(\overline H(r)-\phi(r, \theta)F(r, \theta)\ge F(r, \theta)\Delta d_p=F'(r, \theta).
\end{equation*}
Multiply this inequality by a non-negative $f\in C_c^\infty(M)$ and proceed in the same way as \cite[Theorem 4.1]{li1993lecture}, we can prove \eqref{ineq: lap d1} in the distributional sense. We omit the details.
\item
Integrating \eqref{ineq: lap d1} gives (note that $\log F(r, \theta)-\log \overline F(r)\to 0$ as $ r\to 0^+$)
\begin{align*}
\log F(r, \theta)
\le&-\int_{0}^{r}\int_{0}^{\rho} \widehat{\mathrm{Ric}}_k \left(\frac{s_k(t)}{s_k(\rho)}\gamma_\theta'(t) \right) dt\, d\rho + \log \overline F(r).
\end{align*}
\begin{equation*}\label{ineq: F}
\textrm{i.e. }\quad F(r, \theta)
\le\exp \left[-\int_{0}^{r}\int_{0}^{\rho} \widehat{\mathrm{Ric}}_k \left(\frac{s_k(t)}{s_k(\rho)}\gamma_\theta'(t) \right) dt \, d\rho \right] \overline F(r).
\end{equation*}
We can transform the double integral into a single integral using the function $c$.
By Fubini's theorem and Lemma \ref{lem: aux},
\begin{equation}
\label{eq: sing int}
\begin{split}
\int_{0}^{r}\int_{0}^{\rho} \widehat{\mathrm{Ric}}_k \left(\frac{s_k(t)}{s_k(\rho)}\gamma_\theta'(t) \right) dt \, d\rho
=&\int_{0}^{r}\left(\int_{t}^{r}\frac{1}{s_k(\rho)^2}d\rho\right)\widehat{\mathrm{Ric}}_k(s_k(t)\gamma_\theta'(t))dt\\
=&\int_{0}^{r} \left(\mathrm{ct}_k(t)-\mathrm{ct}_k(r)\right) \widehat{\mathrm{Ric}}_k\left(s_k(t)\gamma_\theta'(t)\right)dt.
\end{split}
\end{equation}
From this we obtain \eqref{est2}. Note that $\mathrm{ct}_k(t)-\mathrm{ct}_k(r)$ is positive by Lemma \ref{lem: aux}.

If $k(t)=k$ is a constant, we can express \eqref{eq: sing int} in a more symmetric form. Indeed, we have the ``compound angle formula'' $ s_k(r-t)=s_k(r)c_k(t)-c_k(r)s_k(t)$, so \eqref{eq: sing int} becomes
\begin{equation*}
\begin{split}
\int_{0}^{r}\int_{0}^{\rho} \widehat{\mathrm{Ric}}_k \left(\frac{s_k(t)}{s_k(\rho)}\gamma_\theta'(t) \right) dt \, d\rho
=&\int_{0}^{r} \left(\mathrm{ct}_k(t)-\mathrm{ct}_k(r)\right) \widehat{\mathrm{Ric}}_k\left(s_k(t)\gamma_\theta'(t)\right)dt\\
=&\frac{1}{s_k(r)}\int_{0}^{r} s_k(t)s_k(r-t) \widehat{\mathrm{Ric}}_k(\gamma_\theta'(t))dt.
\end{split}
\end{equation*}

\end{enumerate}
\end{proof}
\begin{remark}
One may compare Theorem \ref{thm: est} \eqref{est1} with \cite[Lem 2.2]{petersen1997relative}, in which the following integral estimate is proved ($k$ is constant):
\begin{align*}
\int_{B_g(r, p)}[(H-\overline H)^+]^{2p} dV \le C(n,p)
\sup_{x\in M} \int_{B_g(r, x)}\left(\widehat {\underline{\mathrm{Ric}}_k}^-\right)^p dV
\end{align*}
Here $p>\frac{n}{2}$, $H=\frac{1}{n-1}\Delta d_p$ is the (normalized) mean curvature of the geodesic sphere $S_g(r, p)$, $\overline H=\frac{s_k'}{s_k}$, and ${f}_{+}$ and $ f_-$ denote the positive and negative part of a function $f$ respectively.
\end{remark}

As there is no curvature assumption on $M$ in Theorem \ref{thm: est}, we are free to choose any comparison function $ s_k$, which gives us much flexibility. The same applies to results in later sections. We notice that a quantity similar to the R.H.S. of \eqref{ineq: lap d1} when $k=0$ was defined in \cite[p. 340]{cheng1975differential} and \cite[p. 202]{yau1975harmonic} to prove a generalized maximum principle.

In many cases, the Laplacian comparison theorem is used to obtain integral estimates for radial functions and as such often a condition weaker than $\int_{0}^{r}\widehat{\mathrm{Ric}}_k\left(\frac{s_k(t)}{s_k(r)}\gamma_\theta'(t)\right)dt\ge 0$ suffices to draw useful conclusions.
We give an instance of this in Proposition \ref{prop: radial}, and will illustrate its applications by Theorem \ref{thm: Cheng first thm} and Theorem \ref{thm: Cheng second thm} as examples.

We say a smooth function $\phi: \mathbb R\to \mathbb R$ is a radial function if $ \phi$ is even.
It is said to be non-increasing if $ \phi'(r)\le 0$ for $ r\ge 0$. We use this terminology because on a Riemannian manifold any smooth radially symmetric function (whenver this makes sense) is of the form $ \phi\circ d_p$ for a radial function $ \phi$ within the injectivity radius of $ p$.

Denote by $B_g(r, p)$ (resp. $ S_g(r, p)$) to be the geodesic ball (resp. geodesic sphere) of radius $ r$ centered at $ p$ in $ (M, g)$, and $ B_{\overline g}(r):=[0, r]\times \mathbb S^{n-1}$ (resp. $ S_{\overline g}(r)$) to be the geodesic ball (resp. geodesic sphere) of radius $ r$ centered at $ 0$ in $ (\overline M, \overline g)$. We use $ \mathcal B_g(r, p)$ to denote the metric ball of radius $ r$ centered at $ p$ in $ M$, so $ \mathcal B_g(r, p):=\{x\in M: d(p, x)<r\}$.
The metric ball of radius $r$ centered at $ 0$ in $ (\overline M, \overline g)$ is also $ B_{\overline g}(r)$ for $ r\le r_0=\min\{r>0: s_k(r)=0\}$, but $ \mathcal B_{g}(r, p)$ may not coincide with $ B_{g}(r, p)$ for large $ r$. Let $\mathrm{c}(\theta)$ be the cut distance in the direction $\theta$.  We also define $\mathcal B_g'(r, p):=\{\exp_p(\rho v): v\in S_pM, \mathrm{c}(v)\ge r\textrm{ and }0\le \rho< r\}\subset \mathcal B_g(r,p)$.

\begin{proposition}\label{prop: radial}
Suppose $\phi, \psi$ are two non-negative radial functions and $ \phi$ is non-increasing.
Suppose
$\displaystyle \int_{\mathcal B_g'(\rho, p)} \widehat{\mathrm{Ric}}_k \left(s_k(t) \partial_t\right) \ge 0$
for all $ 0\le \rho\le r$. Then
\begin{align*}
\int_{\mathcal B_g(r, p)}\langle \nabla (\psi\circ d_p), \nabla (\phi\circ d_p)\rangle \le -\int_{\mathcal B_g(r, p)} \left(\psi\circ d_p\right)\cdot(\overline \Delta \phi)\circ d_p
\end{align*}
where $\overline \Delta \phi(r):=\phi''(r)+\frac{\overline F'(r)}{\overline F(r)}\phi'(r)$ is the Laplacian of $ \phi$ with respect to the metric $ \overline g$.
In particular, within a geodesic ball, in short, $-\int_{B_g(r, p)} \psi \Delta \phi\le -\int_{B_g(r, p)} \psi\overline \Delta \phi$.
\end{proposition}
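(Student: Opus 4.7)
The plan is to work in geodesic polar coordinates, apply Theorem~\ref{thm: est}(1) to compare $F(\rho,\theta)$ with $\overline F(\rho)$ along each geodesic ray, and invoke the integral hypothesis via a Fubini manipulation. Since $|\nabla d_p|=1$ a.e., $\langle\nabla(\psi\circ d_p),\nabla(\phi\circ d_p)\rangle=\psi'(d_p)\phi'(d_p)$, and so
\[
\mathrm{LHS}-\mathrm{RHS}=\int_{\mathcal B_g(r,p)}\frac{(\psi\phi'\overline F)'(d_p)}{\overline F(d_p)}\,dV.
\]
Writing $dV=F(\rho,\theta)\,d\rho\,d\theta$ and integrating by parts in $\rho$ on each $\theta$-slice, with $R(\theta):=\min(r,\mathrm{c}(\theta))$, yields a boundary contribution $\int_{S_pM}\psi(R)\phi'(R)F(R,\theta)\,d\theta\le0$ (the $\rho=0$ endpoint vanishes because $\phi'(0)=0$ as $\phi$ is even, and $F(0,\theta)=\overline F(0)=0$) together with an interior term $-\int_{S_pM}\int_0^{R(\theta)}\psi\phi'\overline F\,(F/\overline F)'(\rho,\theta)\,d\rho\,d\theta$.

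Next, Theorem~\ref{thm: est}(1) is equivalent to $(F/\overline F)'(\rho,\theta)\le-(F/\overline F)(\rho,\theta)\Phi(\rho,\theta)$ with $\Phi(\rho,\theta):=\int_0^\rho\widehat{\mathrm{Ric}}_k\bigl(\tfrac{s_k(t)}{s_k(\rho)}\gamma_\theta'(t)\bigr)\,dt$; multiplying by the non-positive quantity $\psi\phi'\overline F$ reverses the inequality, so the interior term is bounded above by $\int_{\mathcal B_g(r,p)}\psi(d_p)\phi'(d_p)\Phi(d_p,\theta)\,dV$. The proposition therefore reduces to
\[
\int_{\mathcal B_g(r,p)}\psi(d_p)\bigl(-\phi'(d_p)\bigr)\Phi(d_p,\theta)\,dV\ge0.
\]

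This last inequality is where the integral hypothesis enters and is the main obstacle. Expanding $\Phi$ and swapping integration order via Fubini converts the display into $\int_{S_pM}\int_0^{R(\theta)}s_k(t)^2\widehat{\mathrm{Ric}}_k(\gamma_\theta'(t))\,\beta(t,\theta)\,dt\,d\theta$, with $\beta(t,\theta):=\int_t^{R(\theta)}\psi(\rho)(-\phi'(\rho))F(\rho,\theta)/s_k(\rho)^2\,d\rho\ge0$. The hypothesis states that $G(\rho):=\int_{\mathcal B_g'(\rho,p)}\widehat{\mathrm{Ric}}_k(s_k(t)\partial_t)\,dV\ge0$ for every $\rho\in[0,r]$; since $\beta$ is not a purely radial function of $t$, the hypothesis cannot be invoked directly. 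My plan is to perform one further integration by parts in $t$ against the non-negative monotone family $\{G(\rho)\}$ in an Abel-summation style, using the Jacobian comparison $F(\rho,\theta)/F(t,\theta)$ from Theorem~\ref{thm: est}(1) to absorb the $\theta$-dependence in $\beta$. The ``in particular'' statement within a geodesic ball then follows by one more application of the divergence theorem, discarding the non-positive boundary term $\phi'(r)\psi(r)|S_g(r,p)|\le0$.
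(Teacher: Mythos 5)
Your reduction is correct and follows the paper's own route: the radial integration by parts with the non-positive boundary contribution $\psi(R)\phi'(R)F(R,\theta)$, the splitting of the Laplacian into $\overline{\Delta}\phi$ plus the error $\phi'\bigl(\tfrac{F'}{F}-\tfrac{\overline F'}{\overline F}\bigr)$, and the use of Theorem \ref{thm: est} \eqref{est1} all match the argument in the text, and the proposition does reduce to showing $\int_{\mathcal B_g(r,p)}\psi\,(-\phi')\,\Phi\,dV\ge 0$. The problem is that you stop exactly at the step that carries all the content: the passage from the hypothesis $\int_{\mathcal B_g'(\rho,p)}\widehat{\mathrm{Ric}}_k(s_k(t)\partial_t)\,dV\ge 0$ to this inequality is only announced as a ``plan'' (an Abel summation against $G(\rho)$, with a Jacobian comparison to ``absorb the $\theta$-dependence''), with no actual argument. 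This is a genuine gap, not a routine verification: after your extra Fubini swap the weight $\beta(t,\theta)$ depends on $\theta$ through $F(\rho,\theta)$ and has no monotonicity in $t$ uniform in $\theta$, so summation by parts against the single radial family $\{G(\rho)\}$ does not obviously go through.

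The paper closes this step by \emph{not} performing your second Fubini swap: it keeps the outer variable equal to the radius $t$, so the weight $\psi(t)\,|\phi'(t)|\,s_k(t)^{-2}$ is purely radial and non-negative, and then identifies the remaining inner double integral $\int_{\mathcal S_g(t,p)}\int_0^t\widehat{\mathrm{Ric}}_k\bigl(\tfrac{s_k(\rho)}{s_k(t)}\partial_\rho\bigr)\,d\rho\,dS$ with $s_k(t)^{-2}\int_{\mathcal B_g'(t,p)}\widehat{\mathrm{Ric}}_k(s_k(\rho)\partial_\rho)\,dV$, which is non-negative for each fixed $t$ by hypothesis. The mismatch of Jacobian factors that you noticed ($F$ evaluated at the outer radius versus at the integration point along the geodesic) is precisely the delicate point in that identification, so it must be confronted rather than deferred; in any case, as written your proposal does not complete the proof. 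A secondary error: your derivation of the ``in particular'' statement fails on a sign, since the divergence theorem gives $-\int_{B}\psi\Delta\phi=\int_B\langle\nabla\psi,\nabla\phi\rangle-\psi(r)\phi'(r)\,|S_g(r,p)|\ \ge\ \int_B\langle\nabla\psi,\nabla\phi\rangle$, which is the wrong direction to combine with the main inequality. The correct route is pointwise: $-\psi\Delta\phi+\psi\overline{\Delta}\phi=\psi\,|\phi'|\bigl(\tfrac{F'}{F}-\tfrac{\overline F'}{\overline F}\bigr)$, integrated against $F\,dt\,d\theta$ and controlled by the same error-term estimate.
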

\begin{proof}
Let
$a(\theta)=\min \{\mathrm{c}(\theta), r\}$.
Then
\begin{equation}\label{ineq: radial}
\begin{split}
\int_{0}^{a(\theta)}\psi'(t) \phi'(t)F(t, \theta)dt
=&\left[\psi(t)\phi'(t)F(t, \theta)\right]_{t=0}^{a(\theta)}
-\int_{0}^{a(\theta)}\psi\left(\phi''(t)F(t, \theta)+\phi' F'(t, \theta)\right)dt\\
\le&-\int_{0}^{a(\theta)}\psi\left(\phi''F+\phi' F'\right)dt\\
=&-\int_{0}^{a(\theta)}\psi\left[ \phi''+\phi' \frac{\overline F'}{\overline F}+\phi'\cdot \left(\frac{F'}{F}-\frac{\overline F'}{\overline F}\right) \right] Fdt\\
=&-\int_{0}^{a(\theta)}\psi \overline \Delta \phi F dt-\int_{0}^{a(\theta)}\psi
\phi'\cdot \left(\frac{F'}{F}-\frac{\overline F'}{\overline F}\right) Fdt.
\end{split}
\end{equation}
As $\phi'\le 0$, integrating the second term over $S_pM$ and using Theorem \ref{thm: est},
\begin{equation}\label{ineq: radial2}
\begin{split}
-&\int_{S_p M}\int_{0}^{a(\theta)}\psi (t) \phi'(t) \left(\frac{F'(t, \theta)}{F(t, \theta)}-\frac{\overline F'(t)}{\overline F(t)}\right)F(t, \theta) dt\, d\theta\\
\le&
-\int_{0}^{r}\psi (t) |\phi'(t)| \left(\int_{\mathcal S_g(t, p)} \int_{0}^{t} \widehat{\mathrm{Ric}}_k \left(\frac{s_k(\rho)}{s_k(t)}\partial_\rho \right) d\rho \, dS\right) dt\\
=&
-\int_{0}^{r}\psi (t) |\phi'(t)| \frac{1}{s_k(t)^2}\left(\int_{\mathcal B_g'(t, p)}
\widehat{\mathrm{Ric}}_k \left(s_k(\rho) \partial_\rho\right) dV\right) dt\le 0,
\end{split}
\end{equation}
where $\mathcal S_g(t, p):=\{\exp_p(t \theta): \theta\in S_pM, \mathrm{c}(\theta)> t\}$. In view of this, integrating \eqref{ineq: radial} over $S_pM$ will give the result.
\end{proof}

We now prove some generalizations of the Bonnet-Myers theorem. Roughly speaking it says that the weighted integral of the negative part of the Ricci curvature competes with the positive part of the Ricci curvature together with the function $-s_k'/s_k$ to prevent $ M$ from being bounded. An advantage of our result is that we have the flexibility to choose the function $s_k$.
A number of results in the literature, e.g. \cite{galloway1979generalization}, can be reduced to a suitable choice of $s_k$ in the following result, see also \cite{calabi1967ricci}, \cite{avez1972riemannian}, \cite{markvorsen1982ricci}, \cite{cheeger1982finite}, \cite{petersen1998integral}. Ambrose \cite{ambrose1957theorem} also gives a qualitative version (without a diameter bound) involving the integral of the Ricci curvature.

\begin{theorem}\noindent\label{thm: bonnet myers}
\begin{enumerate}
\item\label{item: myers}
Suppose $s_k$ satisfies \eqref{eq: s} such that the smallest positive zero $r_0$ of $s_k$ exists, i.e.
\begin{equation}\label{eq: r0}
r_0:=\min\{r>0: s_k(r)=0\}.
\end{equation}
If
\begin{align}\label{ineq: limsup}
\limsup_{r\to r_0^-} \left[ \frac{1}{s_k(r)^2}\int_{0}^{r}\widehat{\mathrm{Ric}}_{k}(s_k(t)\gamma_\theta'(t))dt-(n-1)\frac{s_k'(r)}{s_k(r)}\right]=\infty
\end{align}
for any $\theta\in S_pM$,
then every geodesic staring from $p$ which is longer than $ r_0$ has a conjugate point, $ d_p \le r_0$ on $ M$, $ M$ is compact and $ \pi_1(M)$ is finite. (See also Remark \ref{rem: rem1}.)

\item
With the same assumption as \eqref{item: myers},
suppose
\begin{align}\label{ineq: limsup2}
\limsup_{r\to r_0^-} \left[ \frac{1}{s_k(r)^2}\fint_{S_pM}\int_{0}^{r}\widehat{\mathrm{Ric}}_k(s_k(t)\partial _t)dt\, d\theta-(n-1)\frac{s_k'(r)}{s_k(r)}\right]=\infty,
\end{align}
then the injectivity radius at $p$ satisfies $ \mathrm{inj}(p) \le r_0$.

\item
Suppose for all $\theta\in S_pM$, there exists a function $s_k$ satisfying \eqref{eq: s} whose smallest positive root $ r_0$ exists, such that
\begin{equation*}
\limsup_{r\to r_0^-} \left[ \frac{1}{s_k(r)^2}\int_{0}^{r}\widehat{\mathrm{Ric}}_k(s_k(t)\gamma_\theta'(t))dt-(n-1)\frac{s_k'(r)}{s_k(r)}\right]=\infty ,
\end{equation*}
then $M$ is compact and $ \pi_1(M)$ is finite.
\end{enumerate}
\end{theorem}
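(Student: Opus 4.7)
For Part (1), the plan is to reorganize the estimate of Theorem \ref{thm: est} using the fact that $\widehat{\mathrm{Ric}}_k$ is quadratic in its argument, so that pulling the scaling factor $s_k(t)/s_k(r)$ out yields
\begin{equation*}
\frac{F'(r,\theta)}{F(r,\theta)} \;\le\; (n-1)\frac{s_k'(r)}{s_k(r)} \;-\; \frac{1}{s_k(r)^2}\int_0^r \widehat{\mathrm{Ric}}_k\bigl(s_k(t)\gamma_\theta'(t)\bigr)\,dt,
\end{equation*}
which is precisely the negative of the bracketed expression in \eqref{ineq: limsup}. Assume for contradiction that some geodesic $\gamma_\theta$ of length $>r_0$ has no conjugate point in $(0,r_0]$. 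Then the Jacobi fields $Y_i^{r,\theta}$ from the proof of Theorem \ref{thm: est} are well defined for every $r\le r_0$, and the Jacobian density $F(r,\theta)$ is smooth and strictly positive on $(0,r_0]$; hence $(\log F)'(r,\theta)$ is continuous, and so bounded, on a left neighborhood of $r_0$. But the hypothesized limsup drives the right-hand side above to $-\infty$ along some $r_j\to r_0^-$, a contradiction. Thus every such $\gamma_\theta$ acquires a conjugate point in $(0,r_0]$, beyond which it ceases to be minimizing, so $d_p\le r_0$ on $M$ and $M$ is bounded, hence compact by Hopf--Rinow. Finiteness of $\pi_1(M)$ follows by replaying the argument on the Riemannian universal cover at any preimage $\tilde p$ of $p$: the Ricci curvature (hence $\widehat{\mathrm{Ric}}_k$) is invariant under the covering map, so the same hypothesis holds upstairs and forces $\tilde M$ to have bounded diameter.

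Part (2) is the same contradiction, integrated. If $\mathrm{inj}(p)>r_0$, then no cut point occurs along any radial geodesic on $[0,r_0]$, so Theorem \ref{thm: est} applies pointwise for every $\theta\in S_pM$. Averaging over $S_pM$ gives
\begin{equation*}
\fint_{S_pM}(\log F)'(r,\theta)\,d\theta \;\le\; (n-1)\frac{s_k'(r)}{s_k(r)} \;-\; \frac{1}{s_k(r)^2}\fint_{S_pM}\int_0^r \widehat{\mathrm{Ric}}_k\bigl(s_k(t)\partial_t\bigr)\,dt\,d\theta,
\end{equation*}
whose right-hand side runs to $-\infty$ along a sequence $r_j\to r_0^-$ by \eqref{ineq: limsup2}. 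By compactness of $S_pM$ together with smoothness and uniform positivity of $F$ on a neighborhood of $\{r_0\}\times S_pM$, the left-hand side is bounded, yielding the contradiction.

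For Part (3), apply Part (1) separately to each $\theta\in S_pM$ with its own $s_k$ and $r_0(\theta)$: every $\gamma_\theta$ admits a conjugate point at some $\tau(\theta)\le r_0(\theta)<\infty$. The first-conjugate-time function $\tau\colon S_pM\to (0,\infty)$ is upper semicontinuous on the compact sphere $S_pM$ (if $\tau(\theta_n)\to T$ and $\theta_n\to\theta_0$, continuity of $\det d\exp_p$ forces $\det d\exp_p|_{T\theta_0}=0$, so $\tau(\theta_0)\le T$), hence attains a finite maximum $D$. Then $d_p\le D$ on $M$, giving compactness, and finiteness of $\pi_1(M)$ again by the universal-cover lift. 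The main technical point to keep in mind throughout is that the inequality in Theorem \ref{thm: est} is really derived from the Jacobi-field/index-lemma step alone, so it remains valid under the weaker assumption of no conjugate point (rather than no cut point); this is exactly what allows the contradiction to be run on $(0,r_0]$ in Parts (1) and (3).
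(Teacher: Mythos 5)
Your arguments for parts (1) and (2) are correct. Part (1) is essentially the paper's index-form argument repackaged: the paper works directly with the fields $Z_i^r(t)=s_k(t)E_i(t)$ and shows $\sum_i I(Z_i^{r_0},Z_i^{r_0})\le 0$, contradicting positive-definiteness of the index form on fields vanishing at both endpoints when there is no conjugate point; you divide by $s_k(r)^2$ and phrase the same computation as unboundedness of $(\log F)'$, which works because, as you correctly observe, the inequality of Theorem \ref{thm: est} only needs absence of conjugate points, not of cut points. Your part (2) is in fact cleaner than the paper's: the paper extracts from \eqref{ineq: limsup2} a single direction $\theta$ along which the pointwise limsup is infinite (a step that is not obviously justified, since the bracket has no uniform upper bound in $\theta$), whereas you integrate the Laplacian inequality over $S_pM$ and contradict boundedness of $\fint_{S_pM}(\log F)'\,d\theta$ directly.

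Part (3), however, contains a genuine gap. The paper disposes of it by citing Ambrose's Lemma~1 (every geodesic from $p$ having a conjugate point forces compactness); you try to reprove this, but your semicontinuity argument goes the wrong way. What you actually establish from continuity of $\det d\exp_p$ is: if $\theta_n\to\theta_0$ and $\tau(\theta_n)\to T$, then $T\theta_0$ is a singular point of $\exp_p$, hence $\tau(\theta_0)\le T$. Taking $T=\liminf_{\theta\to\theta_0}\tau(\theta)$ this is precisely \emph{lower} semicontinuity of $\tau$, not upper semicontinuity, and a lower semicontinuous finite-valued function on a compact set need not be bounded above (consider $1/|x|$ on $[-1,1]$ with any finite value at $0$). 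Upper semicontinuity is what you need for a finite maximum, and it does not follow from persistence of zeros of $\det d\exp_p$ under perturbation (zeros of a continuous function need not persist); the standard proof uses the index form: if $\gamma_{\theta_0}$ has a conjugate point before $T_0$, there is a field vanishing at the endpoints of $[0,T_0+\epsilon]$ with strictly negative index, and transplanting its components in a parallel frame to nearby geodesics yields negative index there too, hence a conjugate point before $T_0+\epsilon$. With that (or with the citation of Ambrose's lemma, as in the paper) your part (3) closes; as written it does not.
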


\begin{proof}
\begin{enumerate}
\item
Suppose for the sake of contradiction that $\gamma_\theta:[0, r_0]\to M$ is a geodesic with no point conjugate to $ p$.
Using notation in the proof of Theorem \ref{thm: est}, let $Z_i^r(t):=s_k(t)E_i(t)$ on $ [0, r]$.
Similar to \eqref{eq: IX},
\begin{align*}
\sum_{i=1}^{n-1}I(Z_i^r, Z_i^r)=-\int_{0}^{r}\widehat{\mathrm{Ric}}_k(s_k(t)E_i, \gamma_\theta')dt +(n-1)s_k(r)s_k'(r).
\end{align*}
So in view of \eqref{ineq: limsup}, $\displaystyle \sum_{i=1}^{n-1}I(Z_i^{r_0}, Z_i^{r_0})=\lim_{r\to r_0^-} \sum_{i=1}^{n-1}I(Z_i^r, Z_i^r)\le 0$. This implies $I(Z_i^{r_0}, Z_i^{r_0})\le 0$ for some $ i$.
By the equality case of index lemma, either $Z_i^{r_0}$ is a Jacobi field or there exists a Jacobi field with endpoint values equal to $ Z_i^{r_0}$ with strictly smaller index form, contradicting the assumption that $ \gamma_\theta$ has no conjugate point on $ [0, r_0]$. This implies $ d_p\le r_0$ and $ M$ is compact. By applying the same argument to its universal cover $ \widetilde{M}$, standard covering theory then shows that $ \pi_1(M)$ is finite (\cite[Thm. 11.7]{lee1997riemannian}).

\item
Suppose $\mathrm{inj}(p)>r_0$. Then $d_p$ is smooth on $ S_g(r_0,p)$.
By \eqref{ineq: limsup2}, there exists $\theta \in S_pM$ such that $ \limsup_{r\to r_0^-} \left[ \frac{1}{s_k(r)^2}\int_{0}^{r}\widehat{\mathrm{Ric}}_k(s_k(t)\gamma_\theta'(t))dt-(n-1)\frac{s_k'(r)}{s_k(r)}\right]=\infty
$. Put $ x=\gamma_\theta(r) $ in \eqref{ineq: lap d1}, and taking the limit $ r\to r_0^- $, we get $ \Delta d_p(\gamma_\theta(r_0))=-\infty$, a contradiction.
\item
By \eqref{item: myers} and \cite[Lemma 1]{ambrose1957theorem}, $M$ is compact, and so is $\widetilde M$.
\end{enumerate}
\end{proof}

\begin{remark}\label{rem: rem1}
By noting that $s_k>0$ on $ (0, r_0)$ and $ s_k'(r_0)<0$, we can provide some stronger but finitary conditions alternative to \eqref{ineq: limsup}.
One possibility is that
\begin{align*}
\limsup_{r\to r_0^-} \left[ \frac{1}{s_k(r)}\int_{0}^{r}\widehat{\mathrm{Ric}}_k(s_k(t)\gamma_\theta'(t))dt-(n-1)s_k'(r)\right]>0
\end{align*}
for all $\theta \in S_pM$.
Another simpler (but stronger) condition which clearly indicates the relation with the classical Bonnet-Myers theorem is
\begin{equation}\label{ineq: BM}
\int_{0}^{r_0}\widehat{\mathrm{Ric}}_k \left(s_k(t) \gamma_\theta'(t) \right) dt\ge 0
\end{equation}
for all $\theta \in S_pM$.

To see that the two conditions above are stronger, observe that $s_k'(r_0)<0$. Indeed, by Lemma \ref{lem: aux} \eqref{item1}, since $s_k(r_0)=0$ and $ s_k>0 $ on $ (0, r_0)$, we have $ s_k'(r_0)<0$. From this we have $ \lim_{r\to r_0^-}\frac{s_k'(r)}{s_k(r)}=-\infty$. Then we can see that both the above two conditions imply \eqref{ineq: limsup}.

Similarly we can use the simpler condition $\int_{S_pM}\int_{0}^{r_0}\widehat{\mathrm{Ric}}_k \left(s_k(t) \partial_t\right) dt\, d\theta\ge 0$ to replace \eqref{ineq: limsup2}.
\end{remark}
It is also interesting to see that the existence of a conjugate point implies a condition in terms of $\widehat K_k^1$ similar to \eqref{ineq: BM}. We use $\widehat K^1_k(E(t), \gamma'(t))$ to denote $\widehat K^1_k(\mathrm{span}(E(t)), \gamma'(t))$.
\begin{proposition}\label{prop: conj}
Suppose $\gamma$ is a geodesic of length $r_0$ parametrized by arclength. If $\gamma(r_0)$ is the first conjugate point of $\gamma(0)$ along $\gamma$, then there exists a unit vector field $E(t)$ along $\gamma$, together with functions $k(t)$ and $s_k(t)$ satisfying \eqref{eq: s}, such that $\int_{0}^{r_0}s_k(t)^2\widehat K^1_k(E(t), \gamma'(t))dt\ge 0$ with $r_0=\min\{t>0: s_k(t)=0\}$.

\end{proposition}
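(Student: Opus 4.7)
The plan is to extract the Jacobi field at the conjugate point and write it in ``polar form'' $Y = s\,E$. Since $\gamma(r_0)$ is the first conjugate point of $\gamma(0)$ along $\gamma$, there will exist a nontrivial Jacobi field $Y$ along $\gamma$ with $Y(0)=Y(r_0)=0$, which I normalize so that $|Y'(0)|=1$. Standard facts will give $Y\perp \gamma'$ (because $\langle Y,\gamma'\rangle = at+b$ has two zeros so vanishes identically) and that $Y\neq 0$ on $(0,r_0)$ (an interior zero would produce a Jacobi field on a shorter subinterval vanishing at both ends, hence a conjugate point before $r_0$).

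Next, I would set $s(t):=|Y(t)|$ and $E(t):=Y(t)/s(t)$. Since $Y$ solves the Jacobi equation and $Y(0)=Y(r_0)=0$, one has $Y''(0)=Y''(r_0)=0$, so $Y(t)= t\,Y'(0)+O(t^3)$ near $0$ and symmetrically near $r_0$; this will imply that $s$ and $E$ extend smoothly to $[0,r_0]$, with $s(0)=0$, $s'(0)=1$, $s>0$ on $(0,r_0)$, $s(r_0)=0$, and $E$ a smooth unit vector field along $\gamma$ with $E\perp \gamma'$. Substituting $Y=sE$ into $Y''+R(Y,\gamma')\gamma'=0$, taking the inner product with $E$, and using $\langle E',E\rangle=0$ and $\langle E'',E\rangle=-|E'|^2$ (both consequences of $|E|\equiv 1$), I will obtain
\begin{equation*}
s''(t) + \bigl(K(E(t),\gamma'(t))-|E'(t)|^2\bigr)\,s(t)=0.
\end{equation*}
Defining $k(t):=K(E(t),\gamma'(t))-|E'(t)|^2$, which is continuous on $[0,r_0]$, identifies $s$ with $s_k$ satisfying \eqref{eq: s}, and since $s>0$ on $(0,r_0)$ with $s(r_0)=0$, one sees that $r_0=\min\{t>0:s_k(t)=0\}$.

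Finally, because $\gamma$ is arclength parametrized, $\widehat K^1_k(E,\gamma')=K(E,\gamma')-k(t)=|E'(t)|^2\ge 0$, so
\begin{equation*}
\int_0^{r_0} s_k(t)^2\,\widehat K^1_k(E(t),\gamma'(t))\,dt \;=\; \int_0^{r_0} s(t)^2\,|E'(t)|^2\,dt \;\ge\; 0,
\end{equation*}
which is the desired inequality. The only delicate step is the smoothness of $E$ (and continuity of $k$) at the endpoints where $s$ vanishes, but this will follow from the Taylor expansions noted above: $Y/s$ extends smoothly, taking the values $Y'(0)/|Y'(0)|$ at $0$ and $-Y'(r_0)/|Y'(r_0)|$ at $r_0$. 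I expect this endpoint regularity check to be the main (minor) obstacle; the computation itself is essentially forced once one writes $Y$ in polar form, and it is pleasing that $\widehat K^1_k$ turns out to be exactly the ``rotational'' term $|E'|^2$.
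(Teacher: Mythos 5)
Your proposal is correct and follows essentially the same route as the paper: extract the Jacobi field vanishing at both endpoints, write it in polar form $Y=sE$, and read off $k$ from the resulting ODE for $s=|Y|$. The only (harmless) differences are that you conclude via the pointwise identity $\widehat K^1_k(E,\gamma')=|E'|^2\ge 0$ rather than via $I(Y,Y)=0$ as the paper does — both reduce to $\int_0^{r_0}s^2|E'|^2\,dt\ge 0$ — and that you treat the endpoint regularity of $E=Y/|Y|$ more carefully than the paper, which simply asserts the decomposition.
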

\begin{proof}
There exists a nontrivial Jacobi field $Y(t)$ with $Y(0)=0=Y(r_0)$. Let $Y(t)=s(t)E(t)$, where $s(t)=|Y(t)|$ and w.l.o.g. $s'(0)=1$.
We compute $s'=\langle Y', E\rangle $ and
$s''
= -(\langle R(E, \gamma')\gamma', E\rangle -|E'|^2)s =:-k s$.
As $\gamma(r_0)$ is the first conjugate point along $\gamma$,
\begin{align*}
0=I(Y,Y)
=\int_{0}^{r_0} \left(|Y'|^2-\langle R(Y, \gamma')\gamma', Y\rangle\right)
=&\int_{0}^{r_0} \left(s'^2+s^2 |E'|^2-s^2\langle R(E, \gamma')\gamma', E\rangle\right) \\
=&\int_{0}^{r_0} \left(k s^2+s^2 |E'|^2-s^2\langle R(E, \gamma')\gamma', E\rangle\right)\\
\ge&\int_{0}^{r_0} \left(k s^2-s^2\langle R(E, \gamma')\gamma', E\rangle\right)\\
=&-\int_{0}^{r_0}s ^2\widehat K^1_k(E, \gamma').
\end{align*}
\end{proof}

\begin{lemma}[\cite{daicomparison} Lemma 1.4.10]\label{lem: dec}
If $f, g$ are two continuous functions such that $ f(t)/g(t)$ is non-increasing and $ g(t)$ is positive for $ t>0$, then
$\displaystyle \frac{\int_{s}^{r}f(t)dt}{\int_{s}^{r}g(t)dt}$
is non-increasing in $r$ and $ s$.
\end{lemma}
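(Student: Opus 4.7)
The plan is a direct differentiation argument. Fix $s$ and set $F(r)=\int_s^r f(t)\,dt$, $G(r)=\int_s^r g(t)\,dt$, and $Q(r)=F(r)/G(r)$. Since $g>0$ we have $G(r)>0$ for every $r>s$, so $Q$ is differentiable with
\begin{equation*}
Q'(r)=\frac{f(r)G(r)-g(r)F(r)}{G(r)^2}.
\end{equation*}
The strategy is to rewrite the numerator as a single integral and then exploit the monotonicity of $f/g$.

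Specifically, I would observe
\begin{equation*}
f(r)G(r)-g(r)F(r)=\int_s^r\bigl(f(r)g(t)-g(r)f(t)\bigr)\,dt=\int_s^r g(r)g(t)\left(\frac{f(r)}{g(r)}-\frac{f(t)}{g(t)}\right)dt.
\end{equation*}
For $t\in[s,r]$ the non-increasing assumption on $f/g$ gives $f(r)/g(r)\le f(t)/g(t)$, and $g(r)g(t)>0$, so the integrand is non-positive. Hence $Q'(r)\le 0$, which gives monotonicity in $r$.

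For monotonicity in $s$, the same scheme works with $r$ held fixed: since $\partial_s F=-f(s)$ and $\partial_s G=-g(s)$,
\begin{equation*}
\partial_s Q=\frac{-f(s)G(s,r)+g(s)F(s,r)}{G(s,r)^2}=\frac{1}{G(s,r)^2}\int_s^r g(s)g(t)\left(\frac{f(t)}{g(t)}-\frac{f(s)}{g(s)}\right)dt,
\end{equation*}
and now the monotonicity of $f/g$ together with $t\ge s$ makes the integrand non-positive, so $\partial_s Q\le 0$. Nothing is really an obstacle here: the only care needed is to check that $G>0$ (immediate from $g>0$ and $r>s$), so the quotient is genuinely differentiable on the relevant domain and the algebraic identity above is legitimate.
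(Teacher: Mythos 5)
Your argument is correct, and it is essentially the differentiation approach the paper itself uses for the quantitative version of this statement (Lemma \ref{lem: dec2}), where the same numerator $f(r)\int g - g(r)\int f$ is rewritten via the monotone quotient $\alpha=f/g$; for the present lemma the paper simply cites \cite{daicomparison}. Your sign analysis of $f(r)g(t)-g(r)f(t)=g(r)g(t)\bigl(\tfrac{f(r)}{g(r)}-\tfrac{f(t)}{g(t)}\bigr)$, together with the analogous computation in $s$, is exactly the qualitative shadow of that identity, so nothing further is needed.
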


There is also a quantitative version of the lemma.
\begin{lemma}\label{lem: dec2}
If $f(t), g(t)$ are $ C^1$ function, $ a\in \mathbb R$ and $ g(t)>0$, then $h(r)=\frac{\int_{a}^{r}f(t)dt}{\int_{a}^{r}g(t)dt}$ satisfies
$ h'(r)= \frac{g(r)\int_{a}^{r}\left(\int_{a}^{u}g(t) dt \right)\alpha'(u)\, du}{\left(\int_{a}^{r}g(t)dt\right)^2} $
where $ \alpha(t)=\frac{f(t)}{g(t)}$.
\end{lemma}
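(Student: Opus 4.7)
The plan is a direct computation via the quotient rule followed by a Fubini swap, with the function $\alpha = f/g$ entering after we factor out $g(r)$. There is no real geometry here; the content is entirely in rewriting the numerator of $h'$ in a form that exposes the average-value structure.

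First I would differentiate. By the quotient rule and the fundamental theorem of calculus,
\begin{equation*}
h'(r) = \frac{f(r)\int_a^r g(t)\,dt - g(r)\int_a^r f(t)\,dt}{\left(\int_a^r g(t)\,dt\right)^2}.
\end{equation*}
Writing $f(t) = \alpha(t)g(t)$ and pulling out a factor of $g(r)$ from the numerator turns it into
\begin{equation*}
g(r)\left[\alpha(r)\int_a^r g(t)\,dt - \int_a^r \alpha(t)g(t)\,dt\right] = g(r)\int_a^r \bigl[\alpha(r) - \alpha(t)\bigr]\,g(t)\,dt.
\end{equation*}

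Next I would replace the difference $\alpha(r) - \alpha(t)$ by $\int_t^r \alpha'(u)\,du$, which is legitimate since $\alpha = f/g$ is $C^1$ on the relevant interval (because $g > 0$). This gives a double integral
\begin{equation*}
g(r)\int_a^r g(t)\int_t^r \alpha'(u)\,du\,dt
\end{equation*}
over the triangular region $\{(t,u) : a \le t \le u \le r\}$. Swapping the order of integration by Fubini yields
\begin{equation*}
g(r)\int_a^r \alpha'(u)\int_a^u g(t)\,dt\,du,
\end{equation*}
which is exactly the numerator claimed in the lemma. Dividing by $\left(\int_a^r g(t)\,dt\right)^2$ closes the argument.

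No step is a real obstacle; the only thing to be careful about is that the Fubini exchange requires $g$ and $\alpha'$ to be integrable on the triangle, which is immediate from the $C^1$ hypothesis on $f, g$ together with $g > 0$. I would also remark, as a consistency check connecting this quantitative form with the preceding Lemma~\ref{lem: dec}, that if $\alpha$ is non-increasing then $\alpha' \le 0$, so the formula forces $h' \le 0$, recovering the qualitative monotonicity statement.
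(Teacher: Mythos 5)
Your proposal is correct and follows essentially the same route as the paper's proof: quotient rule, rewriting $\alpha(r)-\alpha(t)$ as $\int_t^r \alpha'(u)\,du$ via the fundamental theorem of calculus, and a Fubini swap over the triangle $\{a\le t\le u\le r\}$. The only cosmetic difference is that you factor $f=\alpha g$ before integrating, whereas the paper multiplies the pointwise identity by $g(r)g(t)$ and then integrates in $t$; the computations are identical.
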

\begin{proof}
We compute
$\displaystyle h'(r)= \frac{f(r)\int_{a}^{r}g(t)dt -g(r)\int_{a}^{r}f(t)dt}{\left(\int_{a}^{r}g(t)dt\right)^2} $.
By fundamental theorem of calculus,
$\displaystyle \frac{f(r)}{g(r)}-\frac{f(t)}{g(t)}=\int_{t}^{r}\alpha'(u)du$ and so
$\displaystyle f(r)g(t)-g(r)f(t)=g(r)g(t)\int_{t}^{r}\alpha'(u)du$.

Integrating this with respect to $t$ on $ [a, r]$ and using Fubini's theorem,
\begin{align*}
f(r)\int_{a}^{r}g(t)dt-g(r)\int_{a}^{r} f(t)dt
=g(r)\int_{a}^{r}g(t)\int_{t}^{r}\alpha'(u)du \, dt
=g(r)\int_{a}^{r}\alpha'(u)\int_{a}^{u}g(t) dt \, du.
\end{align*}
Therefore
$h'(r)= \frac{g(r)\int_{a}^{r}\alpha'(u)\int_{a}^{u}g(t) dt\, du}{\left(\int_{a}^{r}g(t)dt\right)^2}$.

\end{proof}

We now give the area and volume comparison theorems. We use $|\cdot|$ to denote either the volume of a domain or the area of a hypersurface, whichever makes sense.

The following theorem can be compared to \cite[Thm. 1.1]{petersen1997relative}.
\begin{theorem}\noindent\label{thm: area vol est}
\begin{enumerate}
\item \label{item: vol est}
Suppose $r< \mathrm{inj}(p)$ and
$s_k >0$ on $ (0, r]$.
We have the estimate
\begin{equation}\label{ineq: vol}
|B_g(r, p)| \le \int_{0}^{r} w(\rho)\overline F(\rho)d \rho,
\end{equation}
where $\displaystyle w(\rho)=\int_{S_pM} \exp \left[-\int_{0}^{\rho} \left(\mathrm{ct}_k(t)-\mathrm{ct}_k(\rho)\right) \widehat{\mathrm{Ric}}_k\left(s_k(t)\partial_t\right)dt \right]d\theta$.

In particular, if
\begin{equation}\label{ineq: cond}
\fint_{S_pM} \exp \left[-\int_{0}^{\rho} \left(\mathrm{ct}_k(t)-\mathrm{ct}_k(\rho)\right) \widehat{\mathrm{Ric}}_k\left(s_k(t)\partial_t\right)dt \right] d\theta \le 1
\end{equation}
for $\rho\in [0, r]$, then $|B_g(r, p)|\le | B_{\overline g}(r)|$.

In both cases, the equality holds if and only if $B_g(r, p)$ is isometric to the geodesic ball $ B_{\overline g}(r)$. (Note also that $ \mathrm{ct}_k$ is decreasing by Lemma \ref{lem: aux}.)
\item\label{item: vol mono}
Suppose $r<\mathrm{inj}(p)$ and $ s_k>0$ on $ (0, r]$, then
\begin{equation*}
\frac{d}{dr} \left(\frac{|S_g(r, p)|}{|S_{\overline g}(r)|}\right)
\le -\frac{1}{|B_{\overline g}(r)|} \int_{B_g(r, p)} \widehat{\mathrm{Ric}}_k \left(\frac{s_k(t)}{s_k(r)} \partial_t \right) dV
\end{equation*}
and
\begin{align*}
\frac{d}{dr}\left(\frac{|B_g(r, p)|} {|B_{\overline g}(r)|} \right)
\le -\frac{\overline F(r)}{|B_{\overline g}(r)|^2} \int_{0}^{r} \frac{|B_{\overline g}(u)|}{\overline F(u)} \int_{B_g(u, p)}\widehat{\mathrm{Ric}}_k \left(\frac{s_k(t)}{s_k(u)}\partial_t \right) dV\, du.
\end{align*}
The equality holds if and only if $B_g(r, p)$ is isometric to $ B_{\overline g}(r)$.

In particular, if
$\displaystyle \int_{B_g(\rho, p)} \widehat{\mathrm{Ric}}_k \left(s_k(t) \partial_t \right) dV\ge 0$
for all $\rho\in (0, r)$, then $\frac{| B_g(\rho, p)|}{| B_{\overline g}(\rho)|}$ is non-increasing on $(0, r)$.

\item
For $r\le r_0$ given by \eqref{eq: r0} ($ r_0:=\infty $ if $s_k$ has no positive zero), $\frac{|\mathcal B_g(r, p)|} {|B_{\overline g}(r)|} $ is absolutely continuous and
\begin{align*}
\frac{d}{dr}\left(\frac{|\mathcal B_g(r, p)|} {|B_{\overline g}(r)|} \right)
\le -\frac{\overline F(r)}{|B_{\overline g}(r)|^2} \int_{0}^{r} \frac{| B_{\overline g}(u)|}{\overline F(u)} \int_{\mathcal B_g'(u, p)}\widehat{\mathrm{Ric}}_k \left(\frac{s_k(t)}{s_k(u)}\partial_t \right) dV\, du.
\end{align*}
Assume
$\displaystyle \int_{0}^{r}\widehat {\mathrm{Ric}} \left(s(t) \gamma_\theta'(t) \right) dt\ge 0 $ for any $ \theta \in S_pM $ and any $ r \in(0, r_0) $.
If $|\mathcal B_g(r, p)|=| B_{\overline g}(r)|$ ($ r\le r_0$), then $ \mathcal B_g(r, p)$ is isometric to $ B_{\overline g}(r)$.

\end{enumerate}

\end{theorem}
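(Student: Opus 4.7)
The plan is to use Theorem \ref{thm: est} \eqref{est2} as the pointwise workhorse, then obtain volumes by integration and monotonicity formulas by differentiation. In polar coordinates around $p$, $|B_g(r,p)| = \int_{S_pM}\int_0^{\min(\mathrm{c}(\theta),r)} F(\rho,\theta)\,d\rho\, d\theta$, which I will unravel together with the comparison density $\overline F(\rho) = s_k(\rho)^{n-1}$.

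For part \eqref{item: vol est}, I would integrate the pointwise bound from Theorem \ref{thm: est} \eqref{est2} over $S_pM$ (valid up to $r$ for every $\theta$ since $r<\mathrm{inj}(p)$) to produce \eqref{ineq: vol} with the advertised weight $w(\rho)$; under condition \eqref{ineq: cond} the average of the exponential factor is $\le 1$, forcing $w(\rho)\le|\mathbb S^{n-1}|$ pointwise and hence $|B_g(r,p)|\le|B_{\overline g}(r)|$. For the equality case I would propagate the scalar equality back through the chain: equality in the integrated volume estimate forces equality in Theorem \ref{thm: est} for a.e.\ $(\rho,\theta)$, which by the equality case of the index lemma in the proof of Theorem \ref{thm: est} forces $Y_i^{\rho,\theta}(t) = \frac{s_k(t)}{s_k(\rho)}E_i(t)$ for all $i,\rho,\theta$. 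Reading off the metric in polar form then gives $g = dr^2 + s_k(r)^2 g_{\mathbb S^{n-1}} = \overline g$, the desired isometry.

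For part \eqref{item: vol mono}, I would differentiate
\[
\frac{d}{dr}\!\left(\frac{|S_g(r,p)|}{|S_{\overline g}(r)|}\right) = \frac{1}{|S_{\overline g}(r)|}\int_{S_pM} F(r,\theta)\!\left(\frac{F'(r,\theta)}{F(r,\theta)} - \frac{\overline F'(r)}{\overline F(r)}\right) d\theta
\]
and bound the parenthesised factor using Theorem \ref{thm: est} \eqref{est1}; reorganising the resulting double integral into an integral over $B_g(r,p)$ with the correct normalisation gives the first stated inequality. For the volume-ratio inequality I would apply Lemma \ref{lem: dec2} with $f(r)=|S_g(r,p)|$, $g(r)=|S_{\overline g}(r)|$, $\alpha=f/g$: substituting the first inequality's bound on $\alpha'(u)$ into the formula of Lemma \ref{lem: dec2} produces the stated double-integral expression. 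Under the sign hypothesis $\int_{B_g(u,p)}\widehat{\mathrm{Ric}}_k(s_k(t)\partial_t)dV\ge 0$, each factor in the integrand has the expected sign, so the ratio is non-increasing.

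Part (3) I would handle in parallel, with $\mathcal B_g'(u,p)$ replacing $B_g(u,p)$, since Theorem \ref{thm: est} \eqref{est1} is valid along every radial geodesic reaching a point of $\mathcal B_g'(u,p)$ and the cut locus has measure zero. Absolute continuity of $r\mapsto|\mathcal B_g(r,p)|/|B_{\overline g}(r)|$ is immediate from the representation $|\mathcal B_g(r,p)| = \int_{S_pM}\int_0^{\min(\mathrm{c}(\theta),r)} F(t,\theta)\,dt\,d\theta$. The hard part will be the equality case: I must upgrade the single scalar identity $|\mathcal B_g(r,p)|=|B_{\overline g}(r)|$ to a pointwise isometry in the presence of the cut locus. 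Under the integral sign assumption, the ratio $\rho\mapsto|\mathcal B_g(\rho,p)|/|B_{\overline g}(\rho)|$ is non-increasing with value $1$ both in the limit $\rho\to 0^+$ and at $\rho = r$, so it is identically $1$ on $[0,r]$; differentiating forces pointwise equality in Theorem \ref{thm: est} on every direction $\theta$, which in particular rules out any short cut distance $\mathrm{c}(\theta)<r$ (otherwise that direction would contribute a strict volume deficit). From here the Jacobi-field identification of part \eqref{item: vol est} applies and delivers the isometry $\mathcal B_g(r,p)\cong B_{\overline g}(r)$.
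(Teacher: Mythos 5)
Your proposal is correct and follows essentially the same route as the paper: part (1) by integrating Theorem \ref{thm: est} \eqref{est2} over $S_pM$, part (2) by differentiating the area ratio via the identity $\bigl(A/\overline A\bigr)'=\frac{1}{\overline A}\int_{S_g(r,p)}\bigl(\frac{F'}{F}-\frac{\overline F'}{\overline F}\bigr)dS$, bounding with \eqref{ineq: lap d1}, and feeding the result into Lemma \ref{lem: dec2}, and part (3) by the same computation with $F$ replaced by $F\chi$ on $\mathcal B_g'$. The only (immaterial) divergence is in the rigidity step, where you read the model metric off the polar form of the Jacobi fields $s_k(t)E_i(t)$ directly rather than invoking the Cartan--Ambrose--Hicks theorem as the paper does; both are standard and equivalent here.
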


\begin{proof}[Proof of Theorem \ref{thm: area vol est}]

\begin{enumerate}
\item
As
$|B_g(r, p)|=\int_{0}^{r}\int_{S_pM} F(\rho, \theta) d\theta d\rho$,
the inequality \eqref{ineq: vol} follows directly from Theorem \ref{thm: est} \eqref{est2}.

If the equality holds, then the normal Jacobi fields adapted to $p$ are of the form $s_k(t)e(t)$ for some parallel $e(t)$ orthogonal to $\gamma$. The result can then be proved using the Cartan-Ambrose-Hicks theorem (\cite[Thm. 1.12.8]{klingenberg2011riemannian}), the details are similar to (but simpler than) the proof of Theorem \ref{thm: kahler}, so we omit them here.

\item
Let $A(r)=|S_g(r, p)|=\int_{\mathbb S^{n-1}} F(r, \theta)d\theta$, $\overline A(r)=\int_{\mathbb S^{n-1}}\overline F(r)d\theta=|\mathbb S^{n-1}| \overline F(r)$. Then
\begin{equation}\label{eq: A'}
\begin{split}
\left(\frac{A(r)}{\overline A(r)}\right)'
=&\frac{1}{|\mathbb S^{n-1}|}\int_{\mathbb S^{n-1}}\frac{\partial}{\partial r}\left(\frac{F(r, \theta)}{\overline F(r)}\right)d\theta\\
=&\frac{1}{|\mathbb S^{n-1}|}\int_{\mathbb S^{n-1}} \left(\frac{F'(r, \theta)}{F(r, \theta)}-\frac{\overline F'(r)}{\overline F(r)}\right)\frac{F(r, \theta)}{\overline F(r)} d\theta\\
=&\frac{1}{\overline A(r)}\int_{S_g(r, p)} \left(\frac{F'(r, \theta)}{F(r, \theta)}-\frac{\overline F'(r)}{\overline F(r)}\right) dS.
\end{split}
\end{equation}

So from \eqref{ineq: lap d1},
\begin{equation}\label{ineq: A'2}
\begin{split}
\displaystyle \left(\frac{A(r)}{\overline A(r)}\right)'
\le& -\frac{1}{\overline A(r)} \int_{S_g(r, p)}\int_{0}^{r} \widehat{\mathrm{Ric}}_k \left(\frac{s_k(t)}{s_k(r)} \partial_t \right) dt\, dS\\
=& -\frac{1}{\overline A(r)} \int_{B_g(r, p)} \widehat{\mathrm{Ric}}_k \left(\frac{s_k(t)}{s_k(r)} \partial_t \right) dV.
\end{split}
\end{equation}
Let $V(r)= |B_g(r, p)|=\int_{0}^{r}A(s)ds$
and $\overline V(r)= |B_{\overline g}(r)|=\int_{0}^{r}\overline A(u)du$. Then by Lemma \ref{lem: dec2} and \eqref{ineq: A'2},
\begin{equation}\label{ineq: V'}
\begin{split}
\frac{d}{dr}\left(\frac{V(r)}{\overline V(r)}\right)
=&
\frac{\overline A(r)}{\overline V(r)^2}\int_{0}^{r}\overline V(u)\frac{d}{du}\left(\frac{A(u)}{\overline A(u)}\right)du \\
\le&-\frac{\overline A(r)}{\overline V(r)^2} \int_{0}^{r} \frac{\overline V(u)}{\overline A(u)} \int_{B_g(u, p)} \widehat{\mathrm{Ric}}_k \left(\frac{s_k(t)}{s_k(u)}\partial_t \right) dV\, du.
\end{split}
\end{equation}

\item
Let $\chi(r, \theta): T_pM\to \mathbb R$ be defined by
$\chi(r, \theta)=
\begin{cases}
1, \quad \textrm{if } \mathrm{c}(\theta)>r\\
0, \quad \textrm{otherwise}.
\end{cases}
$
Then \eqref{eq: A'} and \eqref{ineq: A'2} are still true if we replace $F(r, \theta)$ by $\mathcal F(r, \theta):=F(r, \theta)\chi(r, \theta)$, $S_g(r, p)$ by $\mathcal S_g(r, p)=\{\exp_p(r \theta): \theta\in S_pM, \mathrm{c}(\theta)> r\}$, $B_g(r, p)$ by $\mathcal B_g'(r, p)$ and $A(r)$ by $\mathcal A(r)=\int_{S_pM} \mathcal F(r, \theta)d\theta$ which is absolutely continuous.
Let $\mathcal V(r)=\int_0^r \mathcal A(t)dt$, then the analysis in \eqref{ineq: V'} shows that for almost every $r$,
\begin{align*}
\frac{d }{d r}
\left(\frac{\mathcal V(r)}{\overline V(r)}\right)
\le&-\frac{\overline F(r)}{\overline V(r)^2} \int_{0}^{r} \frac{\overline V(u)}{\overline F(u)} \int_{\mathcal B_g'(u, p)} \widehat{\mathrm{Ric}} \left(\frac{s_k(t)}{s_k(u)}\partial_t \right) dV\, du.
\end{align*}
Under the assumption, if $|\mathcal B_g(r, p)|=| B_{\overline g}(r)|$ ($ r\le r_0$), then $ \frac{\mathcal F(r)}{\overline F(r)}=1$ and $\chi(r, \theta) = 1 $ for all $\theta$, so $ \mathcal B_g(r, p)=B_g(r, p)$ and it has the same volume as $ B_{\overline g}(r)$. So from \eqref{item: vol est} and in view of \eqref{eq: sing int}, we conclude that $ \mathcal B_g(r, p)$ is isometric to $ B_{\overline g}(r)$.
\end{enumerate}
\end{proof}

As a simple corollary, we record here a bound for the isoperimetric ratio for geodesic balls, which may be of independent interest.
\begin{proposition}\label{prop: isop}
If $\int_{B_g(\rho, p)}\widehat {\mathrm{Ric}}_k(s_k(t)\partial _t) dV\ge 0$ for all $\rho\in [0, r]$, then $\frac{|B_g(t, p)|}{|S_g(t, p)|} \ge \frac{|B_{\overline g}(t)|}{|S_{\overline g}(t)|} $ for $t\in [0, r]$.
\end{proposition}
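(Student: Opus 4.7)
The plan is to reduce the claim to the monotonicity of the area ratio $|S_g(\rho,p)|/|S_{\overline g}(\rho)|$, which we already have in hand from Theorem~\ref{thm: area vol est}~\eqref{item: vol mono}, and then deduce the isoperimetric inequality via a weighted-average argument.

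First I would set $h(\rho) := |S_g(\rho,p)|/|S_{\overline g}(\rho)|$ and observe that the hypothesis $\int_{B_g(\rho,p)} \widehat{\mathrm{Ric}}_k(s_k(t)\partial_t)\,dV \ge 0$ plugs directly into the first inequality of Theorem~\ref{thm: area vol est}~\eqref{item: vol mono}. Since $\widehat{\mathrm{Ric}}_k$ is quadratic in its argument, we may pull out the factor $1/s_k(\rho)^2$, which is positive. Thus
\begin{equation*}
h'(\rho) \le -\frac{1}{s_k(\rho)^2\, |B_{\overline g}(\rho)|} \int_{B_g(\rho,p)} \widehat{\mathrm{Ric}}_k(s_k(t)\partial_t)\,dV \le 0,
\end{equation*}
so $h$ is non-increasing on $[0,r]$.

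Next I would write $|B_g(t,p)| = \int_0^t |S_g(\rho,p)|\,d\rho$ and $|B_{\overline g}(t)| = \int_0^t |S_{\overline g}(\rho)|\,d\rho$. The desired inequality $|B_g(t,p)|/|S_g(t,p)| \ge |B_{\overline g}(t)|/|S_{\overline g}(t)|$ is equivalent to
\begin{equation*}
\frac{\int_0^t h(\rho)\,|S_{\overline g}(\rho)|\,d\rho}{\int_0^t |S_{\overline g}(\rho)|\,d\rho} \ge h(t).
\end{equation*}
Since $h$ is non-increasing, $h(\rho) \ge h(t)$ for every $\rho \in [0,t]$, so the weighted average of $h$ on $[0,t]$ against the positive weight $|S_{\overline g}(\rho)|$ is bounded below by $h(t)$. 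Rearranging gives the claim. (Equivalently, one can appeal to Lemma~\ref{lem: dec} applied to $f=|S_g|$ and $g=|S_{\overline g}|$ to conclude that $|B_g(t,p)|/|B_{\overline g}(t)|$ is non-increasing and dominates $h(t)$.)

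There is no substantive obstacle here; the whole content is already encoded in Theorem~\ref{thm: area vol est}~\eqref{item: vol mono}. The only mild subtlety is the coherent use of the quadratic homogeneity of $\widehat{\mathrm{Ric}}_k$ to convert the hypothesis (stated with $s_k(t)\partial_t$) into the form required by the theorem (stated with $\frac{s_k(t)}{s_k(\rho)}\partial_t$); this amounts to a positive rescaling and so does not affect any sign.
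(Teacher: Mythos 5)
Your proof is correct and is essentially the paper's own argument: the hypothesis combined with Theorem~\ref{thm: area vol est}~\eqref{item: vol mono} gives that $A(\rho)/\overline A(\rho)$ is non-increasing, hence $A(s)\ge A(t)\overline A(s)/\overline A(t)$ for $s\le t$, and integrating in $s$ over $[0,t]$ yields the isoperimetric bound. Your explicit remark about the quadratic homogeneity of $\widehat{\mathrm{Ric}}_k$ reconciling the two normalizations is a correct (and welcome) detail that the paper leaves implicit.
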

\begin{proof}
Using the notation in Theorem \ref{thm: area vol est}, for $0\le s\le t$, we have
$A(s)\ge \frac{A(t) \overline A(s)}{\overline A(t)} $.
Integrate this w.r.t. $s$ on $[0,t]$, we can get the result.
\end{proof}

\begin{remark}\label{rem: sc}
Suppose $k$ is a constant. Our condition for Theorem \ref{thm: area vol est} \eqref{item: vol est} and \eqref{item: vol mono} is much weaker than a Ricci curvature lower bound, but stronger than a scalar curvature lower bound, in the sense that if \eqref{ineq: cond} is satisfied for all small enough $r$, then $R(p)\ge n(n-1)k$. It is not hard to see that
\begin{align*}
\exp \left[-
\int_{0}^{r}
(\mathrm{ct}_k(t)-\mathrm{ct}_k(r))
\widehat{\mathrm{Ric}}_k(s_k(t)\gamma_\theta'(t))dt\right] = 1 - (\mathrm{Ric}(\theta, \theta)-(n-1)k)\frac{r^2}{6}+O(r^3).
\end{align*}
We then have
\begin{equation}\label{eq: expan}
\begin{split}
&\fint_{S_pM} \exp \left[-
\int_{0}^{r} \frac{s_k(t)s_k(r-t)}{s_k(r)} \widehat{\mathrm{Ric}}_k(\gamma_\theta'(t))dt\right] d\theta
=1-\left(\frac{R(p)}{n}-(n-1)k\right)\frac{r^2}{6}+O(r^3)
\end{split}
\end{equation}
where we have used $\displaystyle \fint_{\mathbb S^{n-1}}h(\theta, \theta)d\theta=\frac{\mathrm{tr}(h)}{n}$ for a symmetric bilinear form $ h$.

Therefore we conclude that if \eqref{ineq: cond} is satisfied for any small enough $r>0$, then indeed we have the scalar curvature $R(p)\ge n(n-1)k$.

Theorem \ref{thm: area vol est} combined with \eqref{eq: expan} is also consistent with the fact that the Taylor expansion of the volume of small geodesic balls involve only the scalar curvature at $p$ in the $ (n+2)$-th order (cf. \cite[Theorem 3.1]{gray1974volume}), whereas a global area or volume comparison result is not true assuming only a scalar curvature lower bound. Indeed, by a direct computation, or using the formula in \cite[Theorem 3.1]{gray1974volume}, it can be seen that the volume of a geodesic ball in $ \mathbb H^2\times \mathbb S^2$ (which has zero scalar curvature) is larger than the Euclidean one. More precisely, we have
\begin{align*}
|B_g(r, p)|=b_4 r^4 \left(1+
\frac{1}{72\cdot 6\cdot 8}r^4+O(r^6) \right)
>b_4 r^4
\end{align*}
for $r\approx 0$, where $b_4$ is the volume of the unit ball in $ \mathbb R^4$.
\end{remark}
\subsection{Comparison theorems for distance from a submanifold}

In the following result, we are going to use the (polar) Fermi coordinates with respect to an $\ell$-dimensional submanifold $ \Sigma$ (cf. \cite{gray2012tubes}).
These coordinates are suitable to describe the geometry of the tubular neighborhood of $ \Sigma$.

Recall that if $x$ is within the cut locus of $ \Sigma$, then the Fermi coordinates of $ x$ is $ (r, \theta, z)$, where $ r=d(x, \Sigma)=d(x, z)$ for $ z\in \Sigma$ and $ \gamma_\theta$ is the minimizing geodesic with initial vector $ \theta\in S(N_z\Sigma)$.

The mean curvature vector is defined as $H=\frac{1}{\ell}\sum_{i=1}^\ell \left(\nabla_{e_i} e_i\right)^\perp$, where $e_i$ is an orthonormal frame along $ \Sigma$.

\begin{theorem}\label{thm: submfd}
Suppose $\Sigma$ is an $ \ell$-dimensional submanifold of $ M$. Let $ d_\Sigma: M\to \mathbb R$ be the distance from $ \Sigma$, $ H$ be the mean curvature vector of $ \Sigma$
and $(r, \theta, z)$ be the Fermi coordinates of $ x$. Assume $ s_k>0$ on $ (0, r]$ and that the first zero of $ t\mapsto c_k(t)-\langle H(z), \theta\rangle s (t)$ (if exists) appears no earlier than the cut distance in the direction $\theta$.

\begin{enumerate}
\item
We have
\begin{equation}\label{ineq: lap d Sigma}
\begin{split}
\Delta d_\Sigma (x)\le (\log \overline F)'(r, \theta, z) -\psi(r, \theta, z)
\end{split}
\end{equation}
where
\begin{equation}\label{eq: ov F}
\overline F(t, \theta, z)=\left(c_k(t)+\lambda s_k(t)\right)^\ell s_k(t)^{n-1-\ell},
\end{equation}
\begin{align*}
\psi(r, \theta, z)=\int_{0}^{r} \left(\frac{(c_k(t)+\lambda s_k(t))^2}{(c_k(r)+\lambda s_k(r))^2} \widehat {K}^\ell_k(P^t_\theta (T_z\Sigma), \partial_t)+
\frac{s_k(t)^2}{s_k(r)^2}\widehat {K}^{n-1-\ell}_k(P^t_\theta(\theta^\perp\cap N_z\Sigma), \partial_t)
\right) dt,
\end{align*}
$\lambda=-\langle H(z), \theta\rangle $ and
$P^t_\theta$ is the parallel transport along $ \gamma_\theta$ to $\gamma_\theta(t)$.
\item
The volume element $F(r, \theta, z)$ of $ M$ satisfies
\begin{align*}
F(r, \theta, z)
\le& \exp
\left[-\phi(r, \theta, z)\right]\overline F(r, \theta, z)
\end{align*}
where
\begin{equation}\label{eq: phi}
\begin{split}
\phi(r, \theta, z)
=&
\int_{0}^{r}\left[\left(\frac{s_k(r)}{c_k(r)+\lambda s_k(r)}-\frac{s_k(t)}{c_k(t)+\lambda s_k(t)}\right) \left(c_k(t)+\lambda s_k(t)\right)^2\widehat {K}^\ell_k(P^t_\theta (T_z\Sigma), \partial_t)\right. \\
&
\left. +\left(\mathrm{ct}(t)-\mathrm{ct}(r)\right) s_k(t)^2\widehat {K}^{n-\ell-1}_k(P^t_\theta (\theta^\perp\cap N_z\Sigma), \partial_t)\right]
dt.
\end{split}
\end{equation}

In particular, if $\widehat K^i_k\ge 0$ for $ i= \ell, n-1-\ell $, then $ F(r, \theta, z)\le \overline F(r, \theta, z)$.
\end{enumerate}
\end{theorem}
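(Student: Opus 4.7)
The plan is to imitate the proof of Theorem~\ref{thm: est}, using Proposition~\ref{prop: hess} in place of the point-based expression for $\Delta d$. Choose an orthonormal basis $e_1,\dots,e_{n-1}$ of $\nabla d_\Sigma(x)^\perp$ aligned with the splitting $T_z\Sigma\oplus(\theta^\perp\cap N_z\Sigma)$, so that $e_1,\dots,e_\ell$ span $T_z\Sigma$ and $e_{\ell+1},\dots,e_{n-1}$ span $\theta^\perp\cap N_z\Sigma$. Let $E_i(t)$ be the parallel transport of $e_i$ along $\gamma_\theta$, and let $Y_i$ be the $\Sigma$-adapted Jacobi field with $Y_i(r)=E_i(r)$. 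Then $\Delta d_\Sigma(x)=\sum_{i=1}^{n-1}I_\Sigma(Y_i,Y_i)$ by Proposition~\ref{prop: hess}.

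Introduce comparison fields $X_i(t)=\frac{c_k(t)+\lambda s_k(t)}{c_k(r)+\lambda s_k(r)}E_i(t)$ for $i\le\ell$ and $X_i(t)=\frac{s_k(t)}{s_k(r)}E_i(t)$ for $i\ge\ell+1$. Each satisfies $X_i(0)\in T_z\Sigma$ and $X_i(r)=E_i(r)$, so the free-endpoint version of the index lemma yields $I_\Sigma(Y_i,Y_i)\le I_\Sigma(X_i,X_i)$; the positivity of $c_k+\lambda s_k$ and $s_k$ on $(0,r]$ ensures well-definedness and the absence of $\Sigma$-focal points along $\gamma_\theta$. Integration by parts in $I_\Sigma$, using $X_i''=-kX_i$ by construction, gives
\[
I_\Sigma(X_i,X_i)=-\int_0^r a_i(t)^2\,\widehat K^1_k(E_i,\gamma_\theta')\,dt+\langle X_i(r),X_i'(r)\rangle-\langle X_i(0),X_i'(0)\rangle+A_\theta(X_i(0),X_i(0)),
\]
where $a_i$ is the scalar coefficient. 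The cornerstone observation is that for $i\le\ell$, the contribution $-\sum_{i\le\ell}\langle X_i(0),X_i'(0)\rangle=-\ell\lambda/(c_k(r)+\lambda s_k(r))^2$ cancels $\sum_{i\le\ell}A_\theta(X_i(0),X_i(0))$ exactly, because $\sum_{i=1}^\ell A_\theta(e_i,e_i)=-\ell\langle H,\theta\rangle=\ell\lambda$; this is the mechanism that forces the choice $\lambda=-\langle H,\theta\rangle$ and the shape of the model density $\overline F$. The remaining boundary terms at $t=r$, summed over all $i$, reconstruct $(\log\overline F)'(r,\theta,z)$, while the curvature integrals regroup into $\widehat K^\ell_k(P^t_\theta T_z\Sigma,\partial_t)$ and $\widehat K^{n-1-\ell}_k(P^t_\theta(\theta^\perp\cap N_z\Sigma),\partial_t)$, yielding \eqref{ineq: lap d Sigma}.

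Part~(2) follows by integrating (1) from $0$ to $r$, using that $\log F(r,\theta,z)-\log\overline F(r,\theta,z)\to 0$ as $r\to 0^+$ (both densities have leading behavior $r^{n-1-\ell}(1+\ell\lambda r+O(r^2))$ near $\Sigma$), and then applying Fubini. The inner $\rho$-integrals are evaluated via Lemma~\ref{lem: aux}\eqref{item1}: the same Wronskian identity gives $(c_k+\lambda s_k)s_k'-s_k(c_k+\lambda s_k)'=1$, hence $\frac{d}{d\rho}\frac{s_k(\rho)}{c_k(\rho)+\lambda s_k(\rho)}=\frac{1}{(c_k(\rho)+\lambda s_k(\rho))^2}$, paired with $\mathrm{ct}_k'(\rho)=-1/s_k(\rho)^2$; these are precisely the two antiderivatives that produce the factors in $\phi(r,\theta,z)$. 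The main obstacle throughout is the bookkeeping at the $t=0$ endpoint of the index form --- keeping careful track of the shape-operator term so the cancellation dictating $\lambda=-\langle H,\theta\rangle$ becomes transparent --- while the rest is a routine extension of the argument in Theorem~\ref{thm: est}.
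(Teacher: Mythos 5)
Your proposal is correct and follows essentially the same strategy as the paper's proof: $\Sigma$-adapted Jacobi fields via Proposition \ref{prop: hess}, the submanifold index lemma against explicit comparison fields $a_i(t)E_i(t)$, integration by parts with the $t=0$ boundary term cancelling the second-fundamental-form term precisely because $\sum_{i\le\ell}A_\theta(e_i,e_i)=\ell\lambda$, and the Wronskian identities of Lemma \ref{lem: aux} to collapse the double integral into $\phi$. The one point where you diverge is the treatment of the normal directions $i\ge\ell+1$: the paper first converts $\nabla^2 d_\Sigma(u,u)$ into $\nabla^2 d_z(u,u)$ via the Gauss lemma and then invokes the point-based computation \eqref{eq: IX}, whereas you keep the $\Sigma$-adapted Jacobi fields throughout and feed the admissible test fields $\frac{s_k(t)}{s_k(r)}E_i(t)$ (which vanish at $t=0$, so the $A_\theta$ term drops) directly into $I_\Sigma$; both yield the same bound, and your version gives slightly more uniform bookkeeping at the cost of relying on the full submanifold index lemma in all directions. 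One small imprecision: the positivity of $c_k+\lambda s_k$ and $s_k$ does not by itself rule out $\Sigma$-focal points along $\gamma_\theta|_{[0,r]}$ — that is guaranteed by $x$ lying within the cut locus of $\Sigma$, which is what the Fermi-coordinate hypothesis encodes; the positivity (coming from the stated hypothesis on the first zero of $c_k-\langle H,\theta\rangle s_k$) is needed only so that the comparison fields and $\log\overline F$ are well defined.
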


\begin{remark}
\begin{enumerate}
\item
The condition in Theorem \ref{thm: submfd} is mild.
As shown by Heintze and Karcher in \cite[Cor. 3.3.1]{heintze1978general}, if $k$ is constant, the first focal point appears no later than the first zero of $ c_k(t)+\lambda s_k(t)$ in all space forms of curvature $k$. More generally they show that it holds if $ \widehat {\mathrm{Ric}}_k\ge 0$ when $ \dim \Sigma=n-1$, or if $ \widehat K^1_k \ge 0$ for general $ \ell$. Indeed, from the proof in \cite{heintze1978general} (cf. 3.4.4, which also uses the index lemma), we see that $ \widehat K^\ell_k\ge 0$ suffices.
\item
Similar to Theorem \ref{thm: gunther submfd}, we may instead have a sharper $\overline F$ which is expressed in terms of the second fundamental form of $\Sigma$ (instead of $H$), but then the error term $\psi$ will involve $\ell$ sectional curvatures along $\gamma$ instead of the weaker curvature $\widehat K^\ell_k$. So there is some tradeoff between the two types of estimates. Indeed, in this case, $\overline F(t, \theta, z)=s_k(t)^{n-1-\ell}\det [c_k(t)\mathrm{Id}+s_k(t)A_\theta]$ and
\begin{align*}
&\psi(r, \theta, z)\\
=&\int_{0}^{r}\left(\sum_{i=1}^\ell \frac{(c_k(t)^2+\lambda_i s_k(t))^2}{(c_k(r)^2+\lambda_i s_k(r))^2} \widehat K_k(E_i(t), \partial_t)
+\frac{s_k(t)^2}{s_k(r)^2}\widehat K^{n-1-\ell}_k(P^t_\theta(\theta^\perp \cap N_z \Sigma), \partial_t)\right)dt.
\end{align*}
\end{enumerate}
\end{remark}

\begin{proof}
\begin{enumerate}
\item
Let $E_1, \cdots, E_n=\gamma_\theta(t)$ be a parallel orthonormal frame along $ \gamma_\theta(t)$ such that $ E_1(0), \cdots, E_\ell(0)\in T_z\Sigma$.

For $i=1, \cdots, \ell$, let $Y_i(t)$ be the $ \Sigma$-adapted Jacobi field along $ \gamma$ such that $ Y_i(r)=E_i(r)$. Each tangent space of $ w\in N\Sigma$ is naturally split into the orthogonal direct sum of $ \ell$-dimensional horizontal subspace $ \mathcal H$ and $ (n-\ell)$-dimensional vertical subspace $ \mathcal V$ (tangent space of fiber of $ \pi: N\Sigma\to \Sigma$). Let $ \exp_{N\Sigma}: N\Sigma\to M$ be the exponential map of the normal bundle $ N\Sigma$.
Then the $\ell$-dimensional Jacobian of $ d\exp_{N\Sigma}|_{t\theta}$ along $ \mathcal H|_{t\theta}$ is
$F_{\mathcal H}(t, \theta)
=\frac{\det\left(Y_1(t), \cdots, Y_\ell(t)\right)}{\det\left({Y_1}(0), \cdots, {Y_\ell}(0)\right)} $.
We have the formula (cf. \cite[p. 460]{heintze1978general})
\begin{equation}\label{eq: log FH}
\begin{split}
\left(\log F_{\mathcal H} \right)'(r, \theta)
=&\sum_{i=1}^\ell
\left(\int_{0}^{r}\left(\left\langle {Y_i}'(t), {Y_i}'(t)\right\rangle -\left\langle R (Y_i(t), \partial_t) \partial_t, Y_i(t)\right\rangle\right) dt+
A_{\theta}(Y_i(0), Y_i (0))\right)\\
=&\sum_{i=1}^\ell I_\Sigma(Y_i, Y_i).
\end{split}
\end{equation}
Let $\lambda=-\langle H, \theta\rangle $ and define
$X_i(t)= \frac{c_k(t)+\lambda s_k(t)}{c_k(r)+\lambda s_k(r)}E_i(t)$, $i=1, \cdots, \ell$.
As $X_i(r)=Y_i(r)$ and $ X_i(0)\in T\Sigma$, by the index lemma (\cite[Ch. III, Lemma 2.10]{sakai1996riemannian}), we have
$I_\Sigma(Y_i, Y_i) \le I_\Sigma(X_i, X_i)$.
Similar to \eqref{eq: IX}, integration by parts gives
\begin{equation}\label{eq: IX2}
\begin{split}
&\sum_{i=1}^\ell I_\Sigma(X_i, X_i)\\
=&\sum_{i=1}^\ell\left(\int_{0}^{r}\left(-\langle {X_i}'', X_i\rangle -\langle R (X_i, \partial_t) \partial_t, X_i\rangle \right)dt+\left[\left\langle X_i(t), {X_i}'(t)\right\rangle\right]_{t=0}^r +
A_\theta (X_i(0), X_i(0)) \right) \\
=& -\int_{0}^{r}\frac{(c_k(t)+\lambda s_k(t))^2}{(c_k(r)+\lambda s_k(r))^2} \widehat {K}^\ell_k(P^t_\theta (T_z\Sigma), \partial_t) dt+ \ell\, \frac{c_k'(r)+\lambda s_k'(r)}{c_k(r)+\lambda s_k(r)}.
\end{split}
\end{equation}
Combining \eqref{eq: log FH}, index lemma, \eqref{eq: IX2} and Proposition \ref{prop: hess}, we then have
\begin{equation}
\begin{split}
\label{ineq: hess l}
&\sum_{i=1}^\ell \nabla ^2 d_\Sigma \, (E_i(r), E_i(r))\\
=&(\log F_{\mathcal H})'(r, \theta)
\le -\int_{0}^{r} \frac{(c_k(t)+\lambda s_k(t))^2}{(c_k(r)+\lambda s_k(r))^2} \widehat {K}^\ell_k(P^t_\theta (T_z\Sigma), \partial_t) dt+ \ell\, \frac{c_k'(r)+\lambda s_k'(r)}{c_k(r)+\lambda s_k(r)}.
\end{split}
\end{equation}

We now compute $\sum_{i=\ell+1}^{n-1} \nabla ^2 d_\Sigma(E_i(r), E_i(r))$. Let $u=E_i(r)$, $i=\ell+1, \cdots, n-1$. Then by Gauss lemma \cite[Lemma 2.11]{gray2012tubes}, $\nabla d_\Sigma=\nabla d_z$ along the $ (n-\ell-1)$-sphere tangential to $ E_{\ell+1}(r), \cdots, E_{n-1}(r)$, where $d_z$ in the distance from $z\in \Sigma$ defined on $M$. So at $ \gamma_\theta(r)$,
\begin{equation}\label{eq: two hess}
\begin{split}
\nabla ^2 d_\Sigma (u, u)
=\langle \nabla_u (\nabla d_\Sigma), u\rangle
=-\langle \nabla d_\Sigma, \nabla_u u\rangle
=-\langle \nabla d_z, \nabla_u u\rangle
= \nabla^2 d_z(u, u).
\end{split}
\end{equation}
So by \eqref{eq: IX} and Proposition \ref{prop: hess},
\begin{equation}\label{ineq: hess2}
\sum_{i=\ell+1}^{n-1}\nabla ^2 d_\Sigma(E_i(r), E_i(r))
\le -\int_{0}^{r}\frac{s_k(t)^2}{s_k(r)^2} \widehat {K}^{n-\ell-1}_k(P^t_\theta(v^\perp\cap N_z\Sigma), \partial_t)dt+(n-\ell-1)\frac{s_k'(r)}{s_k(r)}.
\end{equation}

As $\nabla ^2 d_\Sigma (\partial_t, \partial_t)=0$, adding the above inequality to \eqref{ineq: hess l}, we get \eqref{ineq: lap d Sigma}.

\item

Using
$F=\sqrt{\det (g_{ij})}$ in Fermi coordinates, we see
that $\Delta d_\Sigma (x)= \frac{F'(r, \theta, z)}{F(r, \theta, z)}$.
So integrating \eqref{ineq: lap d Sigma} and using $\left(\frac{s_k(r)}{c_k(r)+\lambda s_k(r)}\right)'=\frac{1}{(c_k(r)+\lambda s_k(r))^2}$ (by Lemma \ref{lem: aux}), we can proceed as in \eqref{eq: sing int} to show that $F(r, \theta, z) \le \exp \left[-\phi(r, \theta, z)\right]\overline F(r, \theta, z)
$ where $\phi$ is given by \eqref{eq: phi}.
We also see from the proof that the integrand in $\phi$ is non-negative if $ \widehat K^\ell_k$ and $ \widehat{K}^{n-1-\ell}_k$ are non-negative.
\end{enumerate}
\end{proof}
If $r$ is smaller than the injectivity radius $\mathrm{inj}(\Sigma)$ of $ \Sigma$, we define
$S(r, \Sigma):=\{x\in M: d_\Sigma(x)=r\}$ and $ B(r, \Sigma):=\{x\in M: d_\Sigma(x)<r\}$.
We have the following area and volume comparison theorems. We impose more conditions than Theorem \ref{thm: submfd} to make the statement cleaner.
\begin{theorem}\label{thm: submfd vol}
Suppose $\Sigma$ is an $ \ell$-dimensional submanifold of $ M$ and let $ dS$ be the measure on $ \Sigma$. Define
$\overline A(r)=\int_\Sigma\int_{S(N_z\Sigma)} \overline F(r, \theta, z)d\theta dz$ and $\overline V(r)=\int_{0}^{r}\overline A(t)dt$ with $\overline F$ in \eqref{eq: ov F}.
Assume $ k$ is constant, $ s_k>0$ on $ (0, r]$, $ \widehat{K}_k^ \ell \ge 0$ and $ \widehat{K}_k^{ n-\ell-1 }\ge 0$.
\begin{enumerate}
\item \label{item: submfd1}
We have
\begin{align*}
|S(r, \Sigma)|
\le& |\mathbb S^{n-\ell-1}|s_k(r)^{n-\ell-1}\int_\Sigma f(r, |H|)\fint_{S(N_z\Sigma)}\exp(-\phi) d\theta dS\\
\le& |\mathbb S^{n-\ell-1}| s_k(r)^{n-\ell-1}\int_\Sigma f(r, |H|)dS
\end{align*}
where $f(r, h):=\int_{\mathbb S^{n-\ell-1}} \left(c_k(r)+ \langle h e_0, \theta\rangle \right) ^\ell d\theta$ with $ e_0=(1, 0, \cdots, 0)\in \mathbb S^{n-\ell-1}$, and $ \phi\ge 0$ is given in \eqref{eq: phi}.
\item
The function $f(r, h)$ in \eqref{item: submfd1} is non-decreasing in $ h$.
In particular, if $|H|\le h_0$ for some constant $ h_0$, then
$|S(r, \Sigma)|\le |\mathbb S^{n-\ell-1}| s_k(r)^{n-\ell-1}f(r, h_0) |\Sigma|$.
\item\label{item: submfd3}
Suppose $(\log \overline F)'(t, \theta, z)\ge 0 $ for $t\in(0, r_0)$ and $(\theta, z)\in S(N\Sigma)$.
Then on $[0, r_1]$, $\overline A(r)-|S_g(r, \Sigma)|$ is non-negative and non-decreasing, and $\overline V(r)-|B_g(r, \Sigma)|$ is non-negative, non-decreasing and convex. Here $r_1=\min\{r_0, \mathrm{inj}(\Sigma) \}$.
\item
Suppose $\Sigma$ is a minimal submanifold, i.e. $ H=0$, then
\begin{align*}
\frac{d}{dr}\left(\frac{|S(r, \Sigma)|}{\overline A(r)}\right)
\le -\frac{1}{\overline A(r)} \int_ {S(r, \Sigma)} \phi(x) dS
\le 0,
\end{align*}
Here $\phi(x):=\phi(r(x), \theta(x), z(x))$.
We also have
\begin{align*}
\frac{d}{dr}
\left(\frac{|B(r, \Sigma)|}{\overline V(r)}\right)
\le-\frac{\overline A(r)}{\overline V(r)^2} \int_{0}^{r} \frac{\overline V(u)}{\overline A(u)} \int_{S(u, \Sigma)}\phi(x) dS \, du\le 0.
\end{align*}

\end{enumerate}
\end{theorem}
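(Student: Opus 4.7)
The plan is to integrate the pointwise volume--density and Laplacian bounds of Theorem~\ref{thm: submfd} over the unit normal bundle $S(N\Sigma)$ in Fermi coordinates, using $dS=F(r,\theta,z)\,d\theta\,dz$ on $S_g(r,\Sigma)$ for $r<\mathrm{inj}(\Sigma)$, and to mimic the derivative manipulations of Theorem~\ref{thm: area vol est} for the monotonicity assertions. Under the curvature nonnegativity hypotheses $\widehat K^\ell_k,\widehat K^{n-\ell-1}_k\ge 0$, Lemma~\ref{lem: aux} ensures that both weights in the integrand of $\phi$ in \eqref{eq: phi} are nonnegative, hence $\phi\ge 0$ and $F\le \overline F$ pointwise within the cut locus.

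For (\ref{item: submfd1}), integrate $F\le e^{-\phi}\overline F$ over $\theta\in S(N_z\Sigma)$ and $z\in\Sigma$; pulling out the factor $s_k(r)^{n-\ell-1}$ and using rotational invariance of $S(N_z\Sigma)\cong \mathbb S^{n-\ell-1}$ to align $-H(z)$ with $e_0$ turns $\int_{S(N_z\Sigma)}(c_k(r)-\langle H(z),\theta\rangle s_k(r))^\ell\,d\theta$ into $f(r,|H(z)|)$ up to the normalization built into its definition. Bounding $e^{-\phi}\le 1$ in the resulting integrand yields the coarser second inequality. For (2), the reflection $\theta\mapsto -\theta$ shows $f(r,h)=f(r,-h)$, so $f(r,\cdot)$ is even and $\partial_h f(r,0)=0$; differentiating twice gives $\partial_h^2 f(r,h)=\ell(\ell-1)\int_{\mathbb S^{n-\ell-1}}\theta_1^2(c_k(r)+h\theta_1)^{\ell-2}\,d\theta\ge 0$ on the range where $c_k(r)+h\theta_1\ge 0$ (which is exactly what the first--zero hypothesis on $c_k(t)-\langle H,\theta\rangle s_k(t)$ guarantees), and so $\partial_h f\ge 0$ for $h\ge 0$.

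For (\ref{item: submfd3}), $F\le \overline F$ integrates to $|S_g(r,\Sigma)|\le \overline A(r)$ and $|B_g(r,\Sigma)|\le \overline V(r)$. For the monotonicity, the Laplacian bound $F'/F\le (\log \overline F)'-\psi$ from Theorem~\ref{thm: submfd}(1) together with $\psi\ge 0$, the hypothesis $(\log \overline F)'\ge 0$, and $F\le \overline F$ gives the pointwise inequality $F'\le (\overline F'/\overline F)\,F\le \overline F'$; integrating over $(\theta,z)$ then says $(\overline A-|S_g(\cdot,\Sigma)|)'\ge 0$, and the convexity of $\overline V-|B_g(\cdot,\Sigma)|$ follows since its second derivative equals this nonnegative quantity. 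For (4), minimality gives $\lambda=0$, so $\overline F(r,\theta,z)=c_k(r)^\ell s_k(r)^{n-\ell-1}$ depends only on $r$ and $\overline A(r)=|\Sigma|\,|\mathbb S^{n-\ell-1}|\,\overline F(r)$; the computation then parallels Theorem~\ref{thm: area vol est}(\ref{item: vol mono}) verbatim---differentiate $|S_g(r,\Sigma)|/\overline A(r)$ using Theorem~\ref{thm: submfd}(1) and $dS=F\,d\theta\,dz$, which produces the first stated bound, and then apply Lemma~\ref{lem: dec2} with numerator $|S_g(u,\Sigma)|$ and denominator $\overline A(u)$ to pass from the area--ratio monotonicity to the volume--ratio monotonicity.

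The main obstacle is the bookkeeping in (\ref{item: submfd1}): one must carefully track how the $s_k(r)$ factors and the rotational reduction interact with the precise definition of $f$, and verify that the first--zero hypothesis keeps $c_k(t)+\lambda s_k(t)>0$ along the entire integration range so that $\overline F$ and the Jacobi comparison fields used in the proof of Theorem~\ref{thm: submfd} remain nondegenerate. Once (\ref{item: submfd1}) is in place, (2)--(4) are essentially routine transcriptions of the geodesic--sphere arguments of Section~\ref{subsect: Riem} to the Fermi--coordinate setting.
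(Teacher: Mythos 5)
Your proposal is correct and follows essentially the same route as the paper's proof: integrate the pointwise bounds of Theorem~\ref{thm: submfd} over $S(N\Sigma)$, use rotational invariance of the spherical measure to produce $f(r,|H|)$, derive $F'\le \overline F'$ from $(\log\overline F)'\ge 0$ together with $F\le\overline F$ for part~(\ref{item: submfd3}), and in the minimal case repeat the computations of \eqref{ineq: A'2} and \eqref{ineq: V'} via Lemma~\ref{lem: dec2}. The only difference is that for part (2) you spell out the evenness-plus-convexity argument that the paper delegates to the citation of Heintze--Karcher, Proposition 2.1.1; this is the same idea.
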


\begin{proof}
We use the notation in Theorem \ref{thm: submfd}.
\begin{enumerate}
\item
We have $|S(r, \Sigma)|=\int_{\Sigma}\int_{S(N_z\Sigma)} F(r, \theta, z)d\theta dS$.

Define $f(r, h):=\int_{\mathbb S^{n-\ell-1}} \left(c_k(r)+h \langle e_0, \theta\rangle \right) ^\ell d\theta$. Then it follows by the invariance of the spherical measure that $\int_{S(N_p\Sigma)} \overline F(r, \theta, z)d\theta=f(r, |H(z)|) s_k(r)^{n-\ell -1} $. From this and Theorem \ref{thm: submfd} we have
\begin{align*}
|S(r, \Sigma)|
\le& \int_\Sigma f(r, |H(z)|)s_k(r)^{n-\ell-1}\int_{S(N_z\Sigma)}\exp[-\phi(r, \theta, z)]d\theta dS\\
\le& |\mathbb S^{n-\ell-1}| s_k(r)^{n-\ell-1}\int_\Sigma f(r, |H(z)|)dS.
\end{align*}
\item
The fact that $f(r, h)$ is non-decreasing in $ h$ is proved using the same idea in \cite[Proposition 2.1.1]{heintze1978general}.
\item
By Theorem \ref{thm: submfd}, we have
$F'(r, \theta, z)
\le {\overline F'(r, \theta, z)}\frac{F(r, \theta, z)}{\overline F(r, \theta, z)}$.
So if $\overline F'(r, \theta, z)\ge 0$, then by Theorem \ref{thm: submfd} again,
$F'(r, \theta, z) \le {\overline F'(r, \theta, z)} \exp[-\phi(r, \theta, z)]\le \overline F'(r, \theta, z)$.
This implies
\begin{align*}
\frac{d}{dr}|S_g(r, \Sigma)|=
\frac{d}{dr}\left(\int_\Sigma\int_{S(N_z\Sigma)} F(r, \theta, z)d\theta dz\right)\le
\frac{d}{dr}\left(\int_\Sigma\int_{S(N_z\Sigma)} \overline F(r, \theta, z)d\theta dz\right)=\overline A'(r).
\end{align*}
The properties of $\overline V(r)-|B_g(r, \Sigma)|$ follow from this.

We remark that if there exists $\overline \Sigma$ in $\overline M_k$ with the same mean curvature vector $H$ as $\Sigma$, then $\frac{d}{dt}(\log \overline F)$ is the mean curvature of $S_{\overline g}(t, \overline \Sigma)$.
\item
Suppose $H=0$, then
$(\log F)'(r, \theta, z)
\le -\phi(r, \theta, z)+(\log \overline F)'(r)
$ where $ \overline F(t)=c_k(t)^\ell s_k(t)^{n-\ell -1}$.

We have $\overline A(r)=|\mathbb S^{m-\ell-1}||\Sigma|\overline F(r)$ and we can then proceed as in \eqref{ineq: A'2} to show that \begin{align*}
\frac{d}{dr}\left(\frac{S(r, \Sigma)}{\overline A(r)}\right)
\le& -\frac{1}{\overline A(r)}\int_\Sigma\int_{N_z\Sigma} \phi(r, \theta, z) F(r, \theta, z) \, d\theta\, dz\\
=& -\frac{1}{\overline A(r)} \int_ {S(r, \Sigma)} \phi(r(x), \theta(x), z(x)) dS \le 0.
\end{align*}
Note that $\overline A(r)>0$ by \eqref{item: submfd1}.
The second inequality is similar to \eqref{ineq: V'}.
\end{enumerate}
\end{proof}

\section{Comparison results in K\"{a}hler manifolds}\label{sec: kahler}
\subsection{Comparison model and notions of curvatures}
We now turn to a K\"{a}hler manifold $M$ whose complex structure is given by $J$ with $\dim_{\mathbb C}M=n$. We will study it from the real differential geometric point of view, i.e. regard it as a Riemannian manifold with a parallel tensor $J$. It turns out that the model space that we are comparing is not a warped product space. Indeed, the complex space forms $\overline M_k$ of constant holomorphic sectional curvature $ k$ (complex Euclidean space $ \mathbb C^n$, complex projective space $ \mathbb {CP}^n$ and complex hyperbolic space $ \mathbb {CH}^n$) are not warped products unless $ k= 0$ or $ n=1$. See \eqref{eq: H} for the definition of the holomorphic sectional curvature. We now rewrite the metric of $ \overline M_k$ in a form which is comparable to the warped product metric in Subsection \ref{subsect: Riem}.

First of all, let $\mathbb S^{2n-1}=\{z\in \mathbb C^n: |z|=1\}$.
There is a natural $\mathbb S^1$-action on $ \mathbb S^{2n-1}$ by $ e^{i\theta}\cdot(z_1, \cdots, z_n)=(e^{i\theta} z_1, \cdots, e^{i\theta}z_n)$, $ i=\sqrt{-1}$. This action induces a splitting of $ T\mathbb S^{2n-1}=\mathcal H\oplus\mathcal V$ where $ \mathcal V$ is the tangent space of the fiber and $ \mathcal H$ is its orthogonal complement with respect to standard round metric. Let $ g_{\mathcal V}$ and $ g_{\mathcal H}$ be the induced metric from the round metric onto the vertical and horizontal space respectively.

We claim that if $k$ is constant and $ s=s_k$, then within the cut locus, the Riemannian metric of $ \overline M_k$ is of the form
\begin{equation}\label{eq: g cpx}
\overline g=dt^2+ s(t)^2 g_{\mathcal H}+\frac{1}{4}s(2t)^2 g_{\mathcal V}
\end{equation}
defined on $[0, t_0)\times \mathbb S^{2n-1}$,
where $t_0$ is the first positive zero of $ s(2t)$.

This is clear for $k=0$.
We illustrate this for $k=1$. Let $U_0=\{[1, z_1, \cdots, z_n]\in \mathbb CP^n\}\cong \mathbb C^n$. In this coordinate neighborhood, the Fubini-Study metric is of the form
\begin{equation}\label{fs}
\begin{split}%
\overline g=&\frac{dz\cdot d\overline z}
{1+|z|^2}
-\frac{(\overline {z}\cdot dz)(z\cdot d\overline {z})}{\left(1+|z|^2\right)^2}.
\end{split}
\end{equation}
where $z=(z_1, \cdots, z_n)$ and $ z\cdot w=\sum_{j=1}^n z_j w_j$.
From \eqref{fs}, the distance from $[1, 0, \cdots, 0]$ with respect to $ \overline g$ is given by $ t=\int_{0}^{|z|}\frac{1}{1+r^2}dr=\tan^{-1}(|z|)$. So
writing $z=(\tan t) w$, $w\in \mathbb S^{2n-1}\subset \mathbb C^n$ gives
$\overline g=dt^2+\sin^2t \, dw\cdot d\overline {w}-\sin^4 t(w\cdot d\overline {w})(\overline {w}\cdot dw)$,
where we have used $w\cdot d\overline {w}+\overline {w}\cdot dw=0$.

Along the vertical fiber, a unit tangent vector (w.r.t. $g_{\mathbb S^{2n-1}}$) can be represented by $v=\sqrt{-1}w$ at the point $ (t, w)$, and so we have
$|v|_{\overline g}^2=\sin^2 t -\sin^4 t=\frac{1}{4}\sin^2 (2t)$.

Along the horizontal fiber, a unit tangent vector $v$ (w.r.t. $ g_{\mathbb S^{2n-1}}$) is orthogonal to both $ w$ and $ iw$, so $ 0=w\cdot \overline v+\overline {w}\cdot v$ and $ 0=i w\cdot \overline v-i\, \overline {w}\cdot v$.
This implies $w\cdot d\overline w=\overline w\cdot dw=0$ and so $ |v|_{\overline g}^2=\sin^2 t$.

Similarly, this holds for the complex hyperbolic metric $\overline g=\frac{|dz|^2}{1-|z|^2}+\frac{| \overline z\cdot dz |^2}{(1-|z|^2)^2}$, $|z|<1$.

In view of \eqref{eq: g cpx}, it is natural to propose the following model. Let $k_j(t)$, $j=1, 2$, be two continuous functions and suppose $s_{k_j}(t)$ satisfies \eqref{eq: s} for $ k=k_j$.
We define on $\overline M=[0, t_0)\times \mathbb S^{2n-1}$ the metric
\begin{align}\label{eq: model}
\overline g=dt^2+s_{k_1}(t)^2 g_{\mathcal H} +s_{k_2}(t)^2 g_{\mathcal V}
\end{align}
where $t_0$ is the first positive zero of $ s_{k_1}(t) s_{k_2}(t)$.

By a computation similar to \cite[Ch 7, Proposition 35]{o1983semi}, we see that $\overline \Delta r=(2n-2)\frac{s_{k_1}'(r)}{s_{k_1}(r)}+\frac{s_{k_2}'(r)}{s_{k_2}(r)}$ where $ r$ is the $\overline g$-distance from $ 0$ and $ \overline \Delta$ is the Laplacian of $ \overline g$. Similar calculations as in \cite[Ch 7, Proposition 42]{o1983semi} also implies that along the geodesic $ \theta\mapsto t \theta$, $ R_{\overline g}(u, \partial_t)\partial_t=k_1 u$ for $ u\in \mathcal H$, $ R_{\overline g}(u, \partial_t)\partial_t=k_2 u$ for $ u\in \mathcal V$.

We endow a complex structure on $\overline M$ as follows.
Let $r(t)$ be the solution to
$\frac{r'(t)}{r(t)}=\frac{1}{s_{k_2}(t)}$
with $r(0)=0$ and define a diffeomorphism $ \overline M=[0, t_0)\times \mathbb S^{2n-1}\to B(r(t_0), 0)\subset \mathbb C^n$ by $ \Phi(t, w)=r(t)w$. Let $ J=J_{\overline M}$ be the complex structure on $ \overline M$ induced from $ \mathbb C^n$ via this map.

\begin{lemma}\label{lem: parallel}
The complex structure $J$ is parallel w.r.t. $ \overline g$ in \eqref{eq: model} along the $\overline g$-geodesic $ t\mapsto tw_0$, where $ w_0\in \mathbb S^{2n-1}$.
\end{lemma}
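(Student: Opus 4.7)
The plan is to exhibit an explicit $\overline g$-orthonormal parallel frame along the radial curve $\gamma(t)=(t,w_0)$ and check directly that $J$ permutes this frame up to sign; once this is done, parallelism of $J$ along $\gamma$ is automatic. First, observe that $\mathcal H|_{w_0}$ is invariant under $J_{\mathbb C^n}$: the defining conditions $\langle v,w_0\rangle_{\mathbb R}=\langle v,iw_0\rangle_{\mathbb R}=0$ are preserved by multiplication by $i$ because $\langle iv,iw\rangle_{\mathbb R}=\langle v,w\rangle_{\mathbb R}$. Choose an orthonormal basis $u_2,\ldots,u_{2n-1}$ of $\mathcal H|_{w_0}$ paired by $J_{\mathbb C^n}$, say $u_{2k+1}=J_{\mathbb C^n}u_{2k}$ for $k=1,\ldots,n-1$, and extend to coordinates $(y^1,\ldots,y^{2n-1})$ on $\mathbb S^{2n-1}$ near $w_0$ with $\partial_{y^1}(w_0)=iw_0$ and $\partial_{y^j}(w_0)=u_j$ for $j\ge 2$.

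Next I compute parallel transport along $\gamma$. Because $\mathcal H\perp\mathcal V$ and the $u_j$ were chosen orthonormal in the round metric, along $\gamma$ the metric \eqref{eq: model} takes the diagonal form
\begin{align*}
\overline g_{00}=1,\qquad \overline g_{11}(t,w_0)=s_{k_2}(t)^2,\qquad \overline g_{jj}(t,w_0)=s_{k_1}(t)^2\ \ (j\ge 2),
\end{align*}
with all off-diagonal components and their $t$-derivatives vanishing. A short computation of the non-trivial Christoffel symbols gives
\begin{align*}
\nabla_{\partial_t}\partial_t=0,\qquad \nabla_{\partial_t}\partial_{y^1}=\frac{s_{k_2}'(t)}{s_{k_2}(t)}\partial_{y^1},\qquad \nabla_{\partial_t}\partial_{y^j}=\frac{s_{k_1}'(t)}{s_{k_1}(t)}\partial_{y^j}\ \ (j\ge 2),
\end{align*}
so (in particular) $\gamma$ is a geodesic and the frame
\begin{align*}
e_0(t)=\partial_t,\qquad e_1(t)=\frac{iw_0}{s_{k_2}(t)},\qquad e_j(t)=\frac{u_j}{s_{k_1}(t)}\ \ (j\ge 2)
\end{align*}
is an $\overline g$-orthonormal parallel frame along $\gamma$, where the tangential vectors at $(t,w_0)$ are understood via the product structure on $\overline M$.

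It remains to compute $Je_\alpha$ via the definition $J=\Phi_*^{-1}\circ J_{\mathbb C^n}\circ \Phi_*$. Differentiating $\Phi(t,w)=r(t)w$ gives $\Phi_*\partial_t=r'(t)w_0$ and $\Phi_*v=r(t)v$ for any $v\in T_{w_0}\mathbb S^{2n-1}$. Applying $J_{\mathbb C^n}$ and using the defining relation $r'(t)/r(t)=1/s_{k_2}(t)$,
\begin{align*}
J_{\mathbb C^n}(\Phi_*e_0)=r'(t)(iw_0)=\frac{r(t)}{s_{k_2}(t)}(iw_0)=\Phi_*e_1,
\end{align*}
so $Je_0=e_1$, and then $Je_1=J^2e_0=-e_0$. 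For $j\ge 2$,
$J_{\mathbb C^n}(\Phi_*e_j)=(r(t)/s_{k_1}(t))(iu_j)=\Phi_*(iu_j/s_{k_1}(t))$, which by our pairing of basis vectors equals $\pm\Phi_*e_{j'}$ for some $j'$. Thus $J$ sends each $e_\alpha$ to $\pm e_{\alpha'}$; since every $e_\alpha$ is parallel along $\gamma$, so is $Je_\alpha$, and $\nabla_{\partial_t}J=0$ along $\gamma$.

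The main subtle point is the interplay between the radial rescaling $r(t)$ (which governs how $\Phi$ maps radial vectors into $\mathbb C^n$) and the vertical warping factor $s_{k_2}(t)$: the defining ODE $r'/r=1/s_{k_2}$ is precisely what makes $Je_0$ come out equal to $e_1$ with no extra scalar factor, which is in turn what allows $J$ to preserve the parallel structure along $\gamma$. Everything else is a bookkeeping exercise in orthonormal frames.
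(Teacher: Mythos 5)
Your proof is correct and follows essentially the same route as the paper: both arguments exhibit the frame $\partial_t$, $s_{k_2}(t)^{-1}(iw_0)$, $s_{k_1}(t)^{-1}u_j$ as parallel along the radial geodesic (you via the Christoffel symbols of the diagonal metric, the paper via the Koszul formula) and observe that $J$ permutes it up to sign. Your explicit verification through $\Phi_*$ that $Je_0=e_1$ hinges on the ODE $r'/r=1/s_{k_2}$ is a welcome elaboration of a step the paper leaves as ``easy to see,'' but it is not a different proof.
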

\begin{proof}
Regard $\mathbb S^{2n-1}=\{w\in \mathbb C^n: |w|=1\}$ and so identify $ T_{w_0} \mathbb S^{2n-1}=\{z\in \mathbb C^n: \mathrm{Re}(z\cdot \overline w_0)=0\}$.
We can choose a normal coordinates (w.r.t. round metric) $\{\theta^j\}_{j=1}^{2n-1}$ around $ w_0\in \mathbb S^{2n-1}$ such that at $ w_0$, $ \frac{\partial}{\partial \theta^{2n-1}}=\sqrt{-1}w_0$ and $ \sqrt{-1}\frac{\partial}{\partial \theta^j}=\frac{\partial}{\partial \theta^{n-1+j}}$ for $ j=1, \cdots, n-1$. Then it is easy to see that along $ \gamma(t)=tw_0$, $ J\left(\frac{\partial}{\partial t}\right)=\frac{1}{s_{k_2}(t)}\frac{\partial}{\partial \theta^{2n-1}}$, $ J\left(\frac{\partial}{\partial \theta^{2n-1}}\right)=-s_{k_2}(t)\frac{\partial}{\partial t}$,
$J(\frac{\partial}{\partial \theta^j})=\frac{\partial}{\partial \theta^{n-1+j}}$, and $J\left(\frac{\partial}{\partial \theta^{n-1+j}}\right)=-\frac{\partial}{\partial \theta^j}$ for $ j=1, \cdots, n-1$.

From this, it then suffices to show that along $\gamma$, the vector fields $\frac{\partial}{\partial t}$, $\frac{1}{s_{k_2}(t)}\frac{\partial}{\partial \theta^{2n-1}}$ and $ \frac{1}{s_{k_1}(t)}\frac{\partial}{\partial \theta^j}$ are parallel for $ j=1, \cdots, 2n-2$. This can be verified directly by using \eqref{eq: model} and the Koszul formula (note that $ \frac{\partial}{\partial \theta^{2n-1}}\in \mathcal V$ and $ \frac{\partial}{\partial \theta^j}\in \mathcal H$, $ j=1, \cdots, 2n-2$):
\begin{align*}
2\langle \nabla_{\partial_t}Y, Z\rangle
=
\partial_t\langle Y, Z\rangle +Y\langle Z, \partial_t\rangle -Z\langle \partial_t, Y\rangle
-\langle Y, [\partial_t, Z]\rangle -\langle Z, [Y, \partial_t]\rangle +\langle \partial_t, [Z, Y]\rangle.
\end{align*}
\end{proof}

Now we recall some notions of curvatures.
Given two $J$-invariant planes $ \Pi_1$ and $ \Pi_2$ in $ T_pM$, we define the bisectional curvature to be
\begin{align*}
\mathrm{B}(\Pi_1, \Pi_2):=\langle R(v_1, Jv_1)J v_2, v_2\rangle
\end{align*}
where $v_i$ is a unit vector in $ \Pi_i$.
We also define the holomorphic sectional curvature
for $v\ne 0$ to be
\begin{align}\label{eq: H}
{\mathrm{H}}(v):=\frac{\langle R(v, Jv)Jv, v\rangle}{|v|^2}.
\end{align}
The reason for the denominator $|v|^2$ is to give a consistent notion of $ \ell$-holomorphic sectional curvature defined later and is immaterial, as we often only consider the case where $ |v|=1$. For a $ J$-invariant two-plane $ \Pi=\mathrm{span}(v, Jv)$, we define $ {\mathrm{H}}(\Pi)={\mathrm{H}}(v)$ for $ |v|=1$.
It is straightforward to check that $\mathrm{B}(\Pi_1, \Pi_2)$ and $ {\mathrm{H}}(\Pi)$ are well-defined.

We say the bisectional curvature $M$ is bounded below by $ k$ if for any $ p\in M$ and any $ u, v\in T_p M$,
\begin{align}\label{ineq: bisec}
\langle R(u, Ju)Jv, v\rangle
\ge k \left(|u|^2|v|^2+\langle u, v\rangle^2+\langle u, J v\rangle^2\right).
\end{align}
Note that our convention is one-half of that in \cite{goldberg1967holomorphic} but consistent with \cite{li2005comparison}. E.g. the Fubini-Study metric $ g_{FS}=\frac{dz d\overline z}{(1+|z|^2)^2}=\frac{1}{4}g_{\mathbb S^2}$ of $ \mathbb {CP}^1$ has constant holomorphic sectional curvature $ 2$ (and constant sectional curvature $ 4$).

We say the holomorphic sectional curvature of $M$ is bounded below by $k$ if
\begin{align*}
{\mathrm{H}}(v)\ge 2k|v|^2
\end{align*}
for any $ p\in M$ and $ v\in T_pM$.
Note that this is weaker than \eqref{ineq: bisec}.
We also define the orthogonal Ricci curvature (\cite{ni2018comparison}) to be
\begin{align*}
\mathrm{Ric}^\perp (v, v)=\mathrm{Ric}(v, v)-\frac{1}{|v|^2}R(v, Jv, Jv, v)
\end{align*}
for $0\ne v\in T_pM$.

As in the Riemannian case, for a function $k(t)$ we define \begin{equation*}
\widehat{\mathrm{Ric}}_k^{\perp}(v):= \mathrm{Ric} ^{\perp}(v, v)-2(n-1)k g(v, v) \quad\textrm{ and }\quad
\widehat{\mathrm{H}}_k(v):= {\mathrm{H}}(v)-2k g(v, v).
\end{equation*}

For complex space forms $\overline M_k$, we have $\widehat{\mathrm{Ric}}^\perp_{k}=0$ and $ \widehat {\mathrm{H}}_{2k}=0$.
In \cite{li2005comparison}, among other things, Li and Wang proved volume comparison and Laplacian comparison theorems for K\"{a}hler manifolds under a lower bound of the bisectional curvature. The comparison spaces are the complex space forms. Ni and Zheng \cite{ni2018comparison} improved their results by relaxing the condition to a lower bound of the orthogonal Ricci curvature and holomorphic sectional curvature. We will show a generalization of these results under an integral bound of a mixture of the orthogonal Ricci curvature and the holomorphic sectional curvature.
\subsection{Comparison theorems for distance from a point}

In the following,
$\Delta=\mathrm{tr}(\nabla ^2)$ refers to the Laplacian w.r.t. the Riemannian metric, which is $-2$ times the value of $\overline \partial ^*  \overline \partial $ on smooth functions.
\begin{theorem}\label{thm: kahler}
Let $(M, g)$ be a K\"{a}hler manifold. We have the following estimates.
\begin{enumerate}
\item
Assume there is no cut point of $p$ along $ \gamma_\theta$ on $ [0, r]$. If $ s_{k_i}(r)>0$, then
\begin{align*}
\Delta d(x)
\le(2n-2)\frac{s_{k_1}'(r)}{s_{k_1}(r)} + \frac{s_{k_2}'(r)}{s_{k_2}(r)}-\phi(r, \theta)
= \overline \Delta \, \overline d\;(r)-\phi(r, \theta)
\end{align*}
where $x=(r, \theta)$ in geodesic polar coordinates, $ \overline \Delta\, \overline d\;(r)$ is the corresponding Laplacian of the distance function w.r.t. $ \overline g$, and
\begin{equation*}
\phi(r, \theta)= \int_{0}^{r}\left(\frac{s_{k_1}(t)^2}{s_{k_1}(r)^2} \widehat{\mathrm{Ric}}_{k_1}^\perp (\gamma_\theta'(t)) + \frac{s_{k_2}(t)^2}{s_{k_2}(r)^2} \widehat {\mathrm{H}}_{\frac{k_2}{2}}(\gamma_\theta'(t)) \right)dt.
\end{equation*}
\item\label{item: cpx2}
Suppose $r< \mathrm{inj}(p)$ and $ s_{k_i}>0$ on $ (0, r]$. Then
\begin{align*}
|S_g(r, p)|\le |S_{\overline g}(r)|
\fint_{S_pM}
\exp \left[-\psi(r, \theta)\right]d\theta
\end{align*}
where
\begin{align*}
\psi(r, \theta):=\int_{0}^{r} \left(\left(\mathrm{ct}_{k_1}(t)-\mathrm{ct}_{k_1}(r)\right) \widehat{\mathrm{Ric}}_{k_1}^\perp\left(s_{k_1}(t) \gamma_\theta'(t)\right)+\left(\mathrm{ct}_{k_2}(t)-\mathrm{ct}_{k_2}(r)\right) \widehat {\mathrm{H}}_{\frac{k_2}{2}}(s_{k_2}(t)\gamma_\theta'(t))\right) dt.
\end{align*}
We also have
\begin{equation*}
|B_g(r, p)| \le \int_{0}^{r} w(\rho)\overline F(\rho)d \rho,
\end{equation*}
where $w(\rho)=\int_{S_pM} \exp\left[-\psi(\rho, \theta)\right]d\theta $ and $ \overline F(r)=s_{k_1}(r)^{2n-2}s_{k_2}(r)$. In particular, if $ \psi\ge 0$, then $ |B_g(r, p)|\le |B_{\overline g}(r)|$.

\item\label{item: cpx3}
Suppose
$r<\mathrm{inj}(p)$ and $ s>0$ on $ (0, r]$, then
\begin{equation*}
\frac{d}{dr} \left(\frac{|S_g(r, p)|}{|S_{\overline g}(r)|}\right)
\le -\frac{1}{|S_{\overline g}(r)|} \int_{B_g(r, p)}
\left(\frac{s_{k_1}(t)^2}{s_{k_1}(r)^2} \widehat{\mathrm{Ric}}_{k_1}^\perp (\partial _t) + \frac{s_{k_2}(t)^2}{s_{k_2}(r)^2} \widehat {\mathrm{H}}_{\frac{k_2}{2}}(\partial _t) \right)
dV
\end{equation*}
and
\begin{align*}
&\frac{d}{dr}\left(\frac{|B_g(r, p)|} {|B_{\overline g}(r)|} \right)\\
\le& -\frac{\overline F(r)}{|B_{\overline g}(r)|^2} \int_{0}^{r} \frac{|B_{\overline g}(u)|}{\overline F(u)} \int_{B_g(u, p)}
\left(\frac{s_{k_1}(t)^2}{s_{k_1}(u)^2} \widehat{\mathrm{Ric}}_{k_1}^\perp (\partial _t) + \frac{s_{k_2}(t)^2}{s_{k_2}(u)^2} \widehat {\mathrm{H}}_{\frac{k_2}{2}}(\partial _t) \right)
dV \, du.
\end{align*}

on $(0, r)$.

\item
In \eqref{item: cpx2}, \eqref{item: cpx3}, the equality holds if and only if $B_g(r, p)$ is isometric to $ B_{\overline g}(r)$. The isometry is holomorphic if $\overline g$ is K\"ahler.

\end{enumerate}
\end{theorem}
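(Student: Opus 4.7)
The plan is to mirror the proofs of Theorem \ref{thm: est} and Theorem \ref{thm: area vol est}, exploiting the K\"ahler structure by treating the complex line spanned by $J\gamma'$ separately from its orthogonal complement in $(\gamma')^\perp$. All four parts will follow from a single Laplacian comparison inequality obtained via the index lemma, with split test Jacobi fields.

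For part (1), fix a unit-speed geodesic $\gamma=\gamma_\theta$ from $p$ with no cut point on $[0,r]$. Since $J$ is parallel, $J\gamma'(t)$ is itself a parallel unit vector field along $\gamma$ orthogonal to $\gamma'$. I extend it to a parallel orthonormal frame $E_1,\dots,E_{2n-1}$ of $(\gamma')^\perp$ with $E_{2n-1}=J\gamma'$ and $E_1,\dots,E_{2n-2}$ a $J$-invariant orthonormal basis of $\{\gamma',J\gamma'\}^\perp$. Let $Y_i$ be the Jacobi field with $Y_i(0)=0$, $Y_i(r)=E_i(r)$, so \eqref{eq: log F} gives $(\log F)'(r,\theta)=\sum_{i=1}^{2n-1}I(Y_i,Y_i)$. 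I then apply the index lemma with the split comparison fields
\[
X_i(t)=\frac{s_{k_1}(t)}{s_{k_1}(r)}E_i(t)\quad(i\le 2n-2),\qquad X_{2n-1}(t)=\frac{s_{k_2}(t)}{s_{k_2}(r)}E_{2n-1}(t),
\]
and integrate by parts as in \eqref{eq: IX}. Summing over $i\le 2n-2$ I use the K\"ahler identity $\sum_{i=1}^{2n-2}K(E_i,\gamma')=\mathrm{Ric}(\gamma',\gamma')-\mathrm{H}(\gamma')=\mathrm{Ric}^\perp(\gamma',\gamma')$, and combine with the $(2n{-}1)$-st term together with the definitions $\widehat{\mathrm{Ric}}^\perp_{k_1}=\mathrm{Ric}^\perp-(2n-2)k_1 g$ and $\widehat{\mathrm{H}}_{k_2/2}=\mathrm{H}-k_2 g$ to obtain $\Delta d(x)\le \overline\Delta\overline d(r)-\phi(r,\theta)$.

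For parts (2) and (3), note that $\overline F(r)=s_{k_1}(r)^{2n-2}s_{k_2}(r)$ satisfies $(\log\overline F)'=\overline\Delta\overline d$, so (1) reads $(\log F)'\le(\log\overline F)'-\phi$. Integrating from $0$ to $r$ and applying Fubini with $\mathrm{ct}_{k_j}'=-1/s_{k_j}^2$ separately to the two terms of $\phi$, exactly as in \eqref{eq: sing int}, converts $\int_0^r\phi(\rho,\theta)\,d\rho$ into $\psi(r,\theta)$. The area bound in (2) follows by integrating $F\le e^{-\psi}\overline F$ over $S_pM$, and the volume bound by one more integration in $r$. For (3), I repeat the computation of \eqref{eq: A'}--\eqref{ineq: V'}: differentiating the area ratio and substituting the Laplacian inequality yields the first monotonicity statement, and Lemma \ref{lem: dec2} applied with $f=A$, $g=\overline A$ delivers the volume ratio statement.

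For part (4), the rigidity claim, I expect the main obstacle. If equality holds in (2) or (3), tracing back through the proof forces the index-lemma inequality to be an equality for every $i$ and every $\theta\in S_pM$, so each $X_i$ is itself a Jacobi field. Substituting into the Jacobi equation yields $R(E_i,\gamma')\gamma'=k_1 E_i$ for $i\le 2n-2$ and $R(J\gamma',\gamma')\gamma'=k_2 J\gamma'$ along every radial geodesic from $p$, i.e.\ the radial curvatures match those of the model $\overline g$ in \eqref{eq: model}. The Cartan--Ambrose--Hicks theorem, as used in the proof of Theorem \ref{thm: area vol est}\eqref{item: vol est}, then produces an isometry $B_{\overline g}(r)\to B_g(r,p)$ by identification of Jacobi fields in geodesic polar coordinates. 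The delicate point is to verify that this isometry intertwines $J$ and $\overline J$ when $\overline g$ is K\"ahler: by Lemma \ref{lem: parallel}, along each radial $\overline g$-geodesic $\overline J$ is parallel, and the Jacobi-field identification sends $\partial_t\mapsto\partial_t$ and the $s_{k_2}$-scaled direction to $J\gamma'$; parallel transport along the geodesic then propagates the agreement $J\equiv \overline J$ onto the horizontal $(2n-2)$-plane. Organising this compatibility check using Lemma \ref{lem: parallel}, rather than the Riemannian isometry step itself, is the step I anticipate requires the most care.
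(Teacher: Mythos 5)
Your proposal is correct and follows essentially the same route as the paper: the same parallel frame split with $J\gamma'$ isolated, the same $s_{k_1}$/$s_{k_2}$-scaled test fields in the index lemma, the same integration scheme via \eqref{eq: sing int} and Lemma \ref{lem: dec2} for parts (2)--(3), and the Cartan--Ambrose--Hicks argument with Lemma \ref{lem: parallel} for rigidity (the paper handles holomorphicity of the resulting isometry by choosing the initial tangent-space identification to be holomorphic and citing Tam--Yu, rather than propagating $J\equiv\overline J$ by hand as you sketch, but the idea is the same).
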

\begin{proof}
Again let $\gamma(t)$ be a unit speed geodesic. We can define a parallel orthonormal frame along $ \gamma(t)$ of the form
$\{F_1, \cdots, F_{2n}\}=\{e_1, Je_1, \cdots, e_{n-1}, J e_{n-1}, e_n, J e_n=\gamma'(t)\}$. As before, let $Y_i(t)$, $i=1, \cdots, 2n-1$, be the Jacobi field with the endpoint values $Y_i(0)=0$ and $ Y_i(r)=F_i(r)$.

Let $X_i(r)= \frac{s_{k_1}(t)}{s_{k_1}(r)}F_i(t)$, $i=1, \cdots, 2n-2$ and $ X_{2n-1}(t)=\frac{s_{k_2}(t)}{s_{k_2}(r)}F_{2n-1}(t)$. Then $ X_i(0)=Y_i(0)$ and $ X_i(r)=Y_i(r)$.
By \eqref{eq: log F} and \eqref{ineq: index},

\begin{align*}
&(\log F)'(r, \theta)\\
\le&\sum_{i=1}^{2n-1}\int_{0}^{r}\left(-\langle {X_i}'', X_i\rangle -\langle R (X_i, \gamma') \gamma', X_i\rangle \right)dt+\sum_{i=1}^{2n-1}\langle X_i(r), {X_i}'(r)\rangle\\
=&\sum_{i=1}^{2n-2}\int_{0}^{r}\frac{s_{k_1}(t)^2}{s_{k_1}(r)^2} (k_1-\langle R(F_i, \gamma'(t))\gamma'(t), F_i\rangle)dt
+ \int_{0}^{r}\frac{s_{k_2}(t)^2}{s_{k_2}(r)^2}(k_2-\langle R(e_n, \gamma'(t))\gamma'(t), e_n \rangle)dt\\
&+(2n-2)\frac{s_{k_1}'(r)}{s_{k_1}(r)}+\frac{s_{k_2}'(r)}{s_{k_2}(r)}\\
=&\sum_{i=1}^{n-1}\int_{0}^{r}\frac{s_{k_1}(t)^2}{s_{k_1}(r)^2} (2k_1-\langle R(e_i, \gamma'(t))\gamma'(t), e_i\rangle-\langle R(Je_i, \gamma'(t))\gamma'(t), Je_i\rangle)dt\\
&+ \int_{0}^{r}\frac{s_{k_2}(t)^2}{s_{k_2}(r)^2}(k_2-\langle R(e_n, Je_n)Je_n, e_n \rangle)dt +(2n-2)\frac{s_{k_1}'(r)}{s_{k_1}(r)}+\frac{s_{k_2}'(r)}{s_{k_2}(r)}\\
=&-\int_{0}^{r}\left(\frac{s_{k_1}(t)^2}{s_{k_1}(r)^2}\left(\mathrm{Ric}^\perp (\partial_t)-2(n-1)k_1\right)
+\frac{s_{k_2}(t)^2}{s_{k_2}(r)^2}\left({\mathrm{H}}(\partial_t)-k_2\right) \right) dt\\
&+(2n-2)\frac{s_{k_1}'(r)}{s_{k_1}(r)} + \frac{s_{k_2}'(r)}{s_{k_2}(r)}.
\end{align*}
Notice that the term at the last line is $\overline \Delta \, \overline d\;(r)$.
Except the equality case, the proof then proceeds as in Theorem \ref{thm: est} and Theorem \ref{thm: area vol est}.

For the equality case,
let $\iota: T_pM \to T_0 \overline M$ be a holomorphic isometry (i.e. $ \iota\, \circ J_M=J_{\overline M}\circ \, \iota$).
Let $\phi=\exp_0\circ\, \iota\circ\exp_p^{-1}: B_g(r, p)\to B_{\overline g}(r)$. To show that it is an isometry, by the Cartan-Ambrose-Hicks theorem (\cite[Thm. 1.12.8]{klingenberg2011riemannian}), it suffices to show that the map $\iota_t$ defined by $ \overline P^t_{\overline \theta}\circ \iota\circ \left(P^t_\theta\right)^{-1}$ satisfies
\begin{align}\label{eq: CAH}
\iota_t\left(R_g(u, \gamma'(t))\gamma'(t)\right)
= R_{\overline g}(\iota_t(u), \overline \gamma'(t))\overline \gamma'(t).
\end{align}
Here $P^t_\theta$ is the parallel transport along $ \gamma=\gamma_\theta(t)$ from $ T_pM$ to $ T_{\gamma(t)}M$, $ \overline \theta=\iota (\theta)$ and $ \overline P^t_{\overline \theta}$ is the corresponding parallel transport along the geodesic $ \overline \gamma=\overline \gamma_{\overline \theta}$ with initial vector $ \overline \theta$ in $ \overline M$.

Suppose $u=J \gamma'(t)$, then as $s_{k_2}(t)u$ is a Jacobi field by index lemma, from the Jacobi field equation we have $ R_g(u, \gamma'(t))\gamma'(t)=k_2 u$. Recall that $ J_{\overline M}$ is parallel along $ \overline \gamma $ by Lemma \ref{lem: parallel}. So we see that L.H.S. of \eqref{eq: CAH} is $ k_2\iota_t(J_M\gamma'(t))=k_2 J_{\overline M} \overline \gamma '(t) = R_{\overline g}(J_{\overline M}\overline \gamma '(t), \overline \gamma '(t))\overline \gamma '(t)$, which is the R.H.S.. Similarly \eqref{eq: CAH} holds for $ u= F_i$ for $ i=1, \cdots, 2n-2$.

By \cite[Lem. 2.5]{tam2012some}, this isometry is holomorphic if $\overline g$ is K\"ahler.

\end{proof}

The case where $k_1=k=\mathrm{const}$ and $ k_2=4k$ generalizes \cite[Cor. 1.3]{ni2018comparison}. In this case, $ s_{k_1}(t)=s_k(t)$ and $ s_{k_2}(t)=\frac{1}{2}s_k(2t)$. Indeed, the proof of Theorem \ref{thm: kahler} also shows that the estimates for the orthogonal Laplacian and holomorphic Hessian in Theorem 1.1 (i) of \cite{ni2018comparison} can be improved. Since we are mainly interested in ordinary Laplacian comparison, we omit the details here.

Clearly we have the K\"ahler analogue of Theorem \ref{thm: bonnet myers} and its corollaries by the same argument. For simplicity which just states some particular results.
\begin{theorem}\label{thm: cpx myers}
Let $(M, g)$ be a K\"{a}hler manifold. Let $r_0$ be the smallest positive zero of $ s_{k_1} s_{k_2}$.
\begin{enumerate}
\item
If
\begin{align*}
\limsup_{r\to r_0 ^-} \left[
\int_{0}^{r}\left(\frac{s_{k_1}(t)^2}{s_{k_1}(r)^2} \widehat{\mathrm{Ric}}_{k_1}^\perp (\gamma_\theta'(t)) + \frac{s_{k_2}(t)^2}{s_{k_2}(r)^2} \widehat {\mathrm{H}}_{\frac{k_2}{2}}(\gamma_\theta'(t)) \right)dt
-\left((2n-2)\frac{s_{k_1}'(r)}{s_{k_1}(r)} + \frac{s_{k_2}'(r)}{s_{k_2}(r)}\right)\right]=\infty
\end{align*}
for any $\theta \in S_pM$,
then $d_p \le r_0$ on $ M$, $ M$ is compact and $ \pi_1(M)$ is finite.
\item
If $\displaystyle \int_{\mathcal B_g(r, p)} \left[\frac{s_{k_1}(t)^2}{s_{k_1}(r)^2} \widehat{\mathrm{Ric}}_{k_1}^\perp (\partial _t) +\frac{s_{k_2} (t)^2}{s_{k_2}(r)^2}  \widehat {\mathrm{H}}_{\frac{k_2}{2}}(\partial _t)\right] dV\ge 0$
for any $r\in (0, r_0)$,
then $\frac{|\mathcal B_g(r, p)|}{| B_{\overline g}(r)|}$ is non-increasing on $ (0, r_0]$. In particular, $|M|\le |B_{\overline g}(r_0)|$.

If $|\mathcal B_g(r, p)|=| B_{\overline g}(r)|$ ($ r\le r_0<\infty$), then $ \mathcal B_g(r, p)$ is isometric to $ B_{\overline g}(r)$.

\end{enumerate}
\end{theorem}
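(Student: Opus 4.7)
The plan is to parallel the Riemannian arguments of Theorem \ref{thm: bonnet myers} and Theorem \ref{thm: area vol est}, substituting the K\"ahler index-form computation already carried out in the proof of Theorem \ref{thm: kahler}. The only essentially new feature of the K\"ahler setting is that the orthogonal complement of $\gamma'$ splits into a $(2n-2)$-dimensional ``horizontal'' subspace (warped by $s_{k_1}$) and the one-dimensional ``complex'' direction $J\gamma'$ (warped by $s_{k_2}$); these two pieces must be tracked with their respective scalings throughout.

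For part (1), I argue by contradiction as in Theorem \ref{thm: bonnet myers}(1). Suppose a unit-speed geodesic $\gamma_\theta:[0,r_0]\to M$ contains no conjugate point of $p=\gamma_\theta(0)$. Pick a parallel orthonormal frame $F_1,\ldots, F_{2n}=\gamma_\theta'$ along $\gamma_\theta$ with $F_{2n-1}=J\gamma_\theta'$, and set
\begin{align*}
Z_i^r(t) = s_{k_1}(t)F_i(t) \quad (1\le i\le 2n-2),\qquad Z_{2n-1}^r(t) = s_{k_2}(t)F_{2n-1}(t).
\end{align*}
An integration by parts identical to the computation performed for the $X_i$ in the proof of Theorem \ref{thm: kahler} gives
\begin{align*}
\sum_{i=1}^{2n-1}I(Z_i^r,Z_i^r) = (2n-2)s_{k_1}(r)s_{k_1}'(r) + s_{k_2}(r)s_{k_2}'(r) - \int_0^r\!\Big(s_{k_1}^2\widehat{\mathrm{Ric}}_{k_1}^\perp(\gamma_\theta') + s_{k_2}^2\widehat{\mathrm{H}}_{k_2/2}(\gamma_\theta')\Big)dt.
\end{align*}
The limsup hypothesis, after multiplying by the positive factor $s_{k_1}(r)^2$ or $s_{k_2}(r)^2$ as appropriate, forces $\limsup_{r\to r_0^-}\sum_i I(Z_i^r,Z_i^r)\le 0$, so some single $I(Z_i^{r_0},Z_i^{r_0})\le 0$. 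The equality case of the index lemma then produces a conjugate point on $[0,r_0]$, contradicting the standing assumption. The bound $d_p\le r_0$ and compactness of $M$ follow, and lifting to the universal cover gives finiteness of $\pi_1(M)$ by the standard argument invoked in Theorem \ref{thm: bonnet myers}.

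For part (2), I apply Theorem \ref{thm: kahler}(3) inside the injectivity radius: the hypothesis $\int_{\mathcal B_g(r,p)}[\,\cdot\,]dV\ge 0$ renders the right-hand side non-positive, so $r\mapsto |\mathcal B_g(r,p)|/|B_{\overline g}(r)|$ is non-increasing there. The extension across the cut locus is handled by the truncation $\mathcal F(r,\theta)=F(r,\theta)\chi_{\{\mathrm{c}(\theta)>r\}}$ and the set $\mathcal B_g'(r,p)$, exactly as in the proof of Theorem \ref{thm: area vol est}(3), which preserves absolute continuity and monotonicity almost everywhere on $(0,r_0]$. Since the ratio tends to $1$ as $r\to 0^+$, one obtains $|\mathcal B_g(r,p)|\le |B_{\overline g}(r)|$ throughout $(0,r_0]$, and letting $r\to r_0^-$ yields $|M|\le |B_{\overline g}(r_0)|$ provided $d_p\le r_0$. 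Rigidity in the equality case $|\mathcal B_g(r,p)|=|B_{\overline g}(r)|$ saturates the pointwise inequality in Theorem \ref{thm: kahler}, and the holomorphic Cartan--Ambrose--Hicks matching carried out in the proof of Theorem \ref{thm: kahler}(4) supplies the isometry with $B_{\overline g}(r)$.

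The main obstacle I expect is the final implication $|\mathcal B_g(r_0,p)|\le|B_{\overline g}(r_0)|\Rightarrow |M|\le|B_{\overline g}(r_0)|$, which requires $d_p\le r_0$ while the hypothesis is only a ball-averaged integral condition rather than the pointwise limsup of part (1). The cleanest way around this is to observe that when $r_0<\infty$ we have $s_{k_1}(r_0)s_{k_2}(r_0)=0$, so the endpoint boundary contributions in the sum $\sum_i I(Z_i^{r_0},Z_i^{r_0})$ collapse; combining this with the integrated non-negativity of $\widehat{\mathrm{Ric}}_{k_1}^\perp$ and $\widehat{\mathrm{H}}_{k_2/2}$ (which along any geodesic follows from the ball hypothesis upon differentiating in a generic direction) still triggers a conjugate point at $r_0$ via the equality case of the index lemma, and hence the desired diameter bound.
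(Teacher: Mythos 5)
The paper gives no written proof of this theorem (it only asserts that the Riemannian arguments carry over), so I can only judge your reconstruction on its own terms. Your overall strategy is certainly the intended one, but part (1) breaks down at exactly the point where the K\"ahler case genuinely differs from the Riemannian one: there are \emph{two} warping functions with different normalizations and, in general, different zeros. Your formula for $\sum_i I(Z_i^r,Z_i^r)$ is correct, but the hypothesis controls a quantity in which the two curvature integrals carry the \emph{different} weights $s_{k_1}(r)^{-2}$ and $s_{k_2}(r)^{-2}$, whereas $\sum_i I(Z_i^r,Z_i^r)$ weights them equally. ``Multiplying by the positive factor $s_{k_1}(r)^2$ or $s_{k_2}(r)^2$ as appropriate'' is not a legitimate operation on the limsup of a \emph{sum}: from $\limsup(A(r)+B(r))=\infty$ one cannot deduce the sign of $\lim\bigl(s_{k_1}(r)^2A(r)+s_{k_2}(r)^2B(r)\bigr)$ when the two factors differ, and one checks easily (e.g.\ when $\int_0^{r_0}s_{k_1}^2\widehat{\mathrm{Ric}}^\perp_{k_1}\,dt$ is very negative while $\int_0^{r_0}s_{k_2}^2\widehat{\mathrm{H}}_{k_2/2}\,dt\ge 0$) that the implication fails. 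Moreover, even granting $I(Z_i^{r_0},Z_i^{r_0})\le 0$ for some $i$, the equality case of the index lemma yields a conjugate point only if $Z_i^{r_0}$ vanishes at \emph{both} endpoints; since $r_0$ is merely the first zero of the product $s_{k_1}s_{k_2}$, in general only one factor vanishes there. This is not a corner case: for the model $k_1=k$, $k_2=4k>0$ (the $\mathbb{CP}^n$ comparison, used in the example right after the theorem) one has $s_{k_2}(r_0)=0$ but $s_{k_1}(r_0)\neq 0$, so the fields $Z_i^{r_0}=s_{k_1}F_i$, $1\le i\le 2n-2$, do not vanish at $r_0$ and their boundary terms $s_{k_1}(r_0)s_{k_1}'(r_0)$ do not ``collapse''. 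The repair is to use the correctly normalized fields $X_i$ from the proof of Theorem \ref{thm: kahler}, for which $\sum_i I(X_i,X_i)$ equals exactly minus the bracketed expression in the hypothesis; if $\gamma_\theta|_{[0,r_0]}$ had no conjugate point, then $(\log F)'(\cdot,\theta)=\sum_i I(Y_i,Y_i)\le \sum_i I(X_i,X_i)$ would be continuous up to $r_0$ yet tend to $-\infty$ along a subsequence --- a contradiction. (This is the analogue of the blow-up argument in part (2) of Theorem \ref{thm: bonnet myers}, run on $\log F$ rather than on $\Delta d_p$ so that only conjugate points matter.)

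In part (2), the monotonicity and rigidity via Theorem \ref{thm: kahler} and the cut-locus truncation of Theorem \ref{thm: area vol est}(3) are fine, but your proposed derivation of $d_p\le r_0$ is not: the hypothesis is an integral over $\mathcal B_g(r,p)$, and ``differentiating in a generic direction'' cannot upgrade a sphere-averaged non-negativity to non-negativity of $\int_0^r(\cdots)\,dt$ along individual geodesics; and, as above, the endpoint contributions do not all vanish at $r_0$. A correct route to $|M|\le|B_{\overline g}(r_0)|$ stays inside the volume argument: the truncated area ratio $\mathcal A(r)/\overline A(r)$ is non-increasing while $\overline A(r)\to 0$ as $r\to r_0^-$, so $\mathcal A(r)\to 0$; by Fatou's lemma the set of directions whose cut distance exceeds $r_0$ then has measure zero, whence $|M|=|\mathcal B_g(r_0,p)|\le |B_{\overline g}(r_0)|$.
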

For example, if $\limsup_{r\to\frac{\pi}{2}^-}
\left[\int_{0}^{r} \frac{\sin(2t)^2}{\sin(2r)^2}\widehat {\mathrm{H}}_{2}(\gamma_\theta'(t)) dt- 2\cot(2r) \right]
=\infty$, then the diameter of $M$ is bounded above by $ \frac{\pi}{2}$.
If $\int_{0}^{r} \sin^2(t) \widehat{\mathrm{Ric}}_{1}^\perp (\gamma_\theta'(t)) dt \ge 0 $ and $\int_{0}^{r} \sin^ 2 (2t) \widehat {\mathrm{H}}_{2}(\gamma_\theta'(t)) dt\ge 0$
for any $\theta\in S_pM$ and $r\in (0, \frac{\pi}{2})$, then $|M|\le |\mathbb {CP}^n|$.

\subsection{Comparison theorems for distance from a complex submanifold}
As in the Riemannian case, to study the Laplacian of the distance function of a complex submanifold, it is natural to define the $\ell$-holomorphic sectional curvature as follows.

Let $W$ be a subspace of $ T_xM$ which is $ J$-invariant with $ \dim_{\mathbb R}(W)=2\mathrm{dim}_{\mathbb C}(W)=2\ell$, and $ v\in T_xM$.
Inspired by \cite{li2005comparison, ni2018comparison}, we define the $\ell$-holomorphic sectional curvature to be
\begin{align*}
{\mathrm{H}}^\ell (W, v)=\sum_{j=1}^{\ell} \left(\langle R(e_j, v)v, e_j\rangle +\langle R(Je_j, v)v, Je_j\rangle\right)
\end{align*}
where $e_1, Je_1, \cdots, e_\ell, J e_\ell$ is an orthonormal basis of $ W$. It is easy to check that this is well-defined.
Similar as before we define
\begin{align*}
\widehat {\mathrm{H}}^\ell_k(W, v)={\mathrm{H}}^\ell(W, v)-2\ell k g(v,v).
\end{align*}
When $\ell=n-1$ and $ W=\left(\mathrm{span}(v, Jv)\right)^\perp$ this is $ \widehat{\mathrm{Ric}}^\perp_k(v)$ and when $ \ell=1$ and $ W=\mathrm{span}(v, Jv)$ this is $ \widehat {\mathrm{H}}_k(v)$. We say $ \widehat {\mathrm{H}}^{\ell, \perp}_k\ge 0$ if $ \widehat{\mathrm{H}}^\ell_k (W, v)\ge 0$ for all such $ v$, $ W$ with $ \mathrm{span}(v, Jv)\perp W$.

The following result can be compared to \cite[Cor. 2.1]{tam2012some}
\begin{theorem}\label{thm: cpx submfd}
Suppose $\Sigma$ is a complex submanifold of a K\"ahler manifold $ M$ with $ \dim_{\mathbb C}(\Sigma)=\ell$. Let $ d_\Sigma$ be the distance from $ \Sigma$,
and $(r, \theta, z)$ be the Fermi coordinates of $ x$. Assume $ s_{k_i}$, $ c_{k_i}$ are positive on $ (0, r]$.

\begin{enumerate}
\item
We have
\begin{align*}
\Delta d_\Sigma (x)
\le (\log \overline F)'(r)-\psi(r, \theta, z)
\end{align*}
where $\overline F(r)={c_{k_1}(r)^{2\ell}s_{k_1}(r)^{2n-2\ell-2}s_{k_2}(r)}$,
\begin{align*}
\psi(r, \theta, z)
=\int_{0}^{r}
\left(\frac{c_{k_1}(t)^2}{c_{k_1}(r)^2}\widehat {\mathrm{H}}^\ell_{k_1} (P^t_\theta(T_z\Sigma), \partial_t)
+\frac{s_{k_1}(t)^2}{s_{k_1}(r)^2}\widehat {\mathrm{H}}^{n-\ell-1}_{k_1}\left(N_t, \partial_t\right)+ \frac{s_{k_2}(t)^2}{s_{k_2}(r)^2}\widehat {\mathrm{H}}_{\frac{k_2}{2}}(\partial_t)
\right)dt
\end{align*}
and $N_t=P^t_\theta\left(N_z \Sigma\cap \left(\mathrm{span}(\theta, J \theta)\right)^\perp\right)$.
\item
The volume element $F(r, \theta, z)$ of $ M$ satisfies
\begin{align*}
F(r, \theta, z)\le \exp(-\phi(r, \theta, z))\overline F(r)
\end{align*}
where
\begin{align*}
&\phi(r, \theta, z)\\
=&
\int_{0}^{r}\left[(\mathrm{tg}_{k_1}(r)-\mathrm{tg}_{k_1}(t))c_{k_1}(t)^2 \widehat {\mathrm{H}}^\ell_{k_1} (P^t_\theta(T_z\Sigma), \partial_t)
+(\mathrm{ct}_{k_1}(t)-\mathrm{ct}_{k_1}(r)) s_{k_1}(t)^2 \widehat {\mathrm{H}}^{n-\ell-1}_{k_1}\left(N_t, \partial_t\right)\right. \\
&\left. +(\mathrm{ct}_{k_2}(t)-\mathrm{ct}_{k_2}(r))s_{k_2}(t)^2 \widehat {\mathrm{H}}_{\frac{k_2}{2}}(\partial_t)
\right]dt.
\end{align*}
In particular, if $\widehat {\mathrm{H}}^\ell_{k_1}$, $\widehat {\mathrm{H}}^{n-\ell-1}_{k_1}$ and $ \widehat {\mathrm{H}}_{\frac{k_2}{2}}$ are non-negative, then $ F(r, \theta, z)\le \overline F(r)$.
\end{enumerate}
\end{theorem}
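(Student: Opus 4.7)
The plan is to imitate the approach of Theorems \ref{thm: submfd} and \ref{thm: kahler} simultaneously: set up a complex-compatible parallel orthonormal frame along the normal geodesic, apply the index lemma with carefully chosen comparison fields whose weights reflect both the Fermi-coordinate structure (distinguishing tangent vs. normal to $\Sigma$) and the K\"ahler structure of the model \eqref{eq: model} (distinguishing the $J\partial_t$-direction from the other horizontal ones). For a fixed $z\in\Sigma$ and $\theta\in S(N_z\Sigma)$, choose a parallel frame $\{E_1,JE_1,\ldots,E_n,JE_n\}$ along $\gamma_\theta$ such that $E_1(0),JE_1(0),\ldots,E_\ell(0),JE_\ell(0)$ spans $T_z\Sigma$, $E_n(t)=\gamma_\theta'(t)$, $JE_n(t)=J\partial_t$, and $E_{\ell+1}(0),JE_{\ell+1}(0),\ldots,E_{n-1}(0),JE_{n-1}(0)$ spans $N_z\Sigma\cap(\mathrm{span}(\theta,J\theta))^\perp$. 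Let $Y_i$ be the $\Sigma$-adapted Jacobi field with $Y_i(r)=F_i(r)$, where $F_i$ runs over the $2n-1$ vectors of this frame other than $\partial_t$.

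For the first group (the $2\ell$ fields with $F_i(0)\in T_z\Sigma$) I take $X_i(t)=\frac{c_{k_1}(t)}{c_{k_1}(r)}F_i(t)$. Since $F_i(0)\in T_z\Sigma$ and $A_\theta F_i(0)\in N_z\Sigma$ automatically, this is admissible for the submanifold index lemma. For the second group (the $2n-2\ell-2$ fields whose initial values parallel-span $N_z\Sigma\cap(\mathrm{span}(\theta,J\theta))^\perp$) I take $X_i(t)=\frac{s_{k_1}(t)}{s_{k_1}(r)}F_i(t)$. For the exceptional field $F_{2n-1}=J\partial_t$ (which uses $J\theta\in N_z\Sigma$, valid because $\Sigma$ is complex) I take $X_{2n-1}(t)=\frac{s_{k_2}(t)}{s_{k_2}(r)}J\partial_t$. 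The index lemma gives $I_\Sigma(Y_i,Y_i)\le I_\Sigma(X_i,X_i)$ in each case.

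Next I compute $I_\Sigma(X_i,X_i)$ group by group as in \eqref{eq: IX2}, using $s_{k_j}''=-k_js_{k_j}$, $c_{k_1}''=-k_1c_{k_1}$ and the fact that $F_i$ is parallel and unit. The integrands collect into $\frac{c_{k_1}(t)^2}{c_{k_1}(r)^2}\bigl(k_1-\langle R(F_i,\partial_t)\partial_t,F_i\rangle\bigr)$, $\frac{s_{k_1}(t)^2}{s_{k_1}(r)^2}\bigl(k_1-\langle R(F_i,\partial_t)\partial_t,F_i\rangle\bigr)$, and $\frac{s_{k_2}(t)^2}{s_{k_2}(r)^2}\bigl(k_2-\mathrm{H}(\partial_t)\bigr)$ respectively. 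Summing the boundary terms over all $i$ produces $2\ell\frac{c_{k_1}'(r)}{c_{k_1}(r)}+(2n-2\ell-2)\frac{s_{k_1}'(r)}{s_{k_1}(r)}+\frac{s_{k_2}'(r)}{s_{k_2}(r)}=(\log\overline F)'(r)$. The boundary contribution from the second fundamental form is $A_\theta(X_i(0),X_i(0))=\frac{1}{c_{k_1}(r)^2}A_\theta(F_i(0),F_i(0))$ in the first group and vanishes in the other two; summed over $i=1,\ldots,2\ell$ this equals $-\frac{2\ell}{c_{k_1}(r)^2}\langle H(z),\theta\rangle$, which is zero since any complex submanifold of a K\"ahler manifold is minimal. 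Pairing the $J$-conjugate pairs $(E_j,JE_j)$ in each group turns $\sum_i\langle R(F_i,\partial_t)\partial_t,F_i\rangle$ exactly into the appropriate $\ell$-holomorphic sectional curvature $\mathrm{H}^\ell(P^t_\theta(T_z\Sigma),\partial_t)$, $\mathrm{H}^{n-\ell-1}(N_t,\partial_t)$, or $\mathrm{H}(\partial_t)$, so the subtracted terms reorganize into $\widehat{\mathrm{H}}^\ell_{k_1}$, $\widehat{\mathrm{H}}^{n-\ell-1}_{k_1}$ and $\widehat{\mathrm{H}}_{k_2/2}$. Combined with $\sum_i I_\Sigma(Y_i,Y_i)=(\log F_{\mathcal H})'(r)+\sum_{i\ge 2\ell+1}\nabla^2 d_\Sigma(F_i(r),F_i(r))=\Delta d_\Sigma(x)$ (as in \eqref{eq: two hess} and \eqref{ineq: hess l}), this yields part (1).

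For part (2), integrate $\Delta d_\Sigma(x)=(\log F)'(r,\theta,z)$ from $0$ to $r$ and apply Fubini exactly as in \eqref{eq: sing int}: the three weight antiderivatives $\int_t^r\frac{d\rho}{c_{k_1}(\rho)^2}=\mathrm{tg}_{k_1}(r)-\mathrm{tg}_{k_1}(t)$, $\int_t^r\frac{d\rho}{s_{k_1}(\rho)^2}=\mathrm{ct}_{k_1}(t)-\mathrm{ct}_{k_1}(r)$ and $\int_t^r\frac{d\rho}{s_{k_2}(\rho)^2}=\mathrm{ct}_{k_2}(t)-\mathrm{ct}_{k_2}(r)$ (Lemma \ref{lem: aux}) convert the iterated integral of $\psi$ into the single integral $\phi$ in the stated form. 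The nonnegativity of the three weight factors (since $\mathrm{tg}_{k_1}$ is increasing and both $\mathrm{ct}_{k_j}$ are decreasing) yields the final remark. The main obstacle I anticipate is bookkeeping: one must check that the chosen comparison fields are genuinely $\Sigma$-adapted (this is where the complex structure of $\Sigma$ enters crucially, to put $J\theta\in N_z\Sigma$), and that pairing $E_j$ with $JE_j$ is compatible with the way the subspaces $T_z\Sigma$, $N_z\Sigma\cap(\mathrm{span}(\theta,J\theta))^\perp$, and $\mathbb{R}\cdot J\theta$ have been chosen so the curvature sums assemble correctly into the $\widehat{\mathrm{H}}^\bullet$-quantities.
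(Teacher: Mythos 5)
Your proposal is correct and follows essentially the same route as the paper: the paper's proof of Theorem \ref{thm: cpx submfd} likewise takes the parallel frame $\{E_1, JE_1,\dots,E_n,JE_n=\gamma_\theta'\}$ with the first $2\ell$ vectors initially spanning $T_z\Sigma$, uses minimality of complex submanifolds to choose the comparison fields $\frac{c_{k_1}(t)}{c_{k_1}(r)}F_i$, $\frac{s_{k_1}(t)}{s_{k_1}(r)}F_i$, $\frac{s_{k_2}(t)}{s_{k_2}(r)}J\partial_t$, applies the index lemma as in \eqref{ineq: hess l}--\eqref{ineq: hess2}, and integrates via Lemma \ref{lem: aux} for part (2). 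The only cosmetic slip is your admissibility remark for the first group: the index lemma only requires $X_i(0)\in T_z\Sigma$ and matching endpoint (the condition $Y'(0)-A_\theta Y(0)\in N\Sigma$ is for the adapted Jacobi field, not the comparison field), which does not affect the argument.
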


\begin{proof}
Since the proof is similar to Theorem \ref{thm: submfd}, we only indicate the changes here.
\begin{enumerate}
\item
Firstly, the parallel orthonormal frame along $\gamma_\theta$ is now
$\{F_1, \cdots, F_{2n}\}=\{E_1, JE_1, \cdots, E_{n}, JE_{n}=\gamma_\theta'\}$, with $F_1(0), \cdots, F_{2\ell}(0)\in T_z\Sigma $.
Note that a complex submanifold is minimal (cf. \cite[p. 171]{frankel1961manifolds}), so we define $X_i(t)=\frac{c_{k_1}(t)}{c_{k_1}(r)}F_i(t)$, $i=1, \cdots, 2\ell$. Similar to \eqref{ineq: hess l} we have
\begin{align*}
\sum_{j=1}^{2\ell}\nabla ^2 d_\Sigma(F_j(r), F_j(r))
\le&
\sum_{j=1}^{2\ell}I_{\Sigma}(X_j, X_j)
=-\int_{0}^{r}\frac{c_{k_1}(t)^2}{c_{k_1}(r)^2}\widehat {\mathrm{H}}^\ell_{k_1} (P^t_\theta(T_z\Sigma), \partial_t)dt+2\ell \frac{c_{k_1}'(r)}{c_{k_1}(r)}.
\end{align*}
Similar to \eqref{ineq: hess2},
\begin{align*}
&\sum_{j=2\ell+1}^{2n}\nabla ^2 d_{\Sigma}(F_j(r), F_j(r))\\
\le& -\int_{0}^{r}\left(\frac{s_{k_1}(t)^2}{s_{k_1}(r)^2}\widehat {\mathrm{H}}^{n-\ell-1}_{k_1}\left(N_t, \partial_t\right)+ \frac{s_{k_2}(t)^2}{s_{k_2}(r)^2}\widehat {\mathrm{H}}_{\frac{k_2}{2}}(\partial_t)\right)dt
+(2n-2\ell-2)\frac{s_{k_1}'(r)}{s_{k_1}(r)}+\frac{s_{k_2}'(r)}{s_{k_2}(r)}
\end{align*}
where $N_t=P^t_\theta\left(N_z \Sigma\cap \left(\mathrm{span}(v, Jv)\right)^\perp\right)$.
Combining the two inequalities gives the result.
\item
Similar to Theorem \ref{thm: est} \eqref{est2}, we have
\begin{align*}
&\log F(r, \theta, z)\\
\le& \log \overline F(r)\\
&-\int_{0}^{r}\left[(\mathrm{tg}_{k_1}(r)-\mathrm{tg}_{k_1}(t))c_{k_1}(t)^2 \widehat {\mathrm{H}}^\ell_{k_1} (P^t_\theta(T_z\Sigma), \partial_t)
+(\mathrm{ct}_{k_1}(t)-\mathrm{ct}_{k_1}(r)) s_{k_1}(t)^2 \widehat {\mathrm{H}}^{n-\ell-1}_{k_1}\left(N_t, \partial_t\right)\right. \\
&\left. +(\mathrm{ct}_{k_2}(t)-\mathrm{ct}_{k_2}(r))s_{k_2}(t)^2 \widehat {\mathrm{H}}_{\frac{k_2}{2}}(\partial_t)
\right]dt.
\end{align*}
Here we have also used Lemma \ref{lem: aux}. This implies the result.
\end{enumerate}
\end{proof}

We have the following analogue of Theorem \ref{thm: submfd vol}.

\begin{theorem}\label{thm: cpx submfd vol}
Suppose $\Sigma$ is a complex submanifold of a K\"ahler manifold $ M$ with $ \dim_{\mathbb C} \Sigma=\ell$. Assume $k_i\in \mathbb R$, $ s_{k_i}, c_{k_i}>0$ on $ (0, r]$, $ \widehat{\mathrm{H}}_{k_1}^{j, \perp}\ge 0$ for $ j= \ell, n-\ell-1 $ and $ \widehat {\mathrm{H}}_{\frac{k_2}{2}}\ge 0$.
\begin{enumerate}
\item
We have
\begin{align*}
|S(r, \Sigma)|
\le |\mathbb S^{2n-2\ell-1}|\overline F(r) \int_\Sigma \fint_{S(N_z\Sigma)}\exp(-\phi) d\theta dS
\le |\mathbb S^{2n-2\ell-1}|\overline F(r) |\Sigma|
\end{align*}
where $\overline F$ and $ \phi\ge 0$ are given in Theorem \ref{thm: cpx submfd}.
\item
We have
\begin{align*}
\frac{d}{dr}\left(\frac{|S(r, \Sigma)|}{\overline A(r)}\right)
\le -\frac{1}{\overline A(r)} \int_ {S(r, \Sigma)} \phi\, dS
\le 0,
\end{align*}
where $\overline A(r)= |\Sigma||\mathbb S^{2n-2\ell-1}| \overline F(r)$ and $ \phi(x):=\phi(r(x), v(x), z(x))$.

Let $\overline V(r)= \int_{0}^{r}\overline A(u)du$, then we also have
\begin{align*}
\frac{d}{dr}
\left(\frac{|B(r, \Sigma)|}{\overline V(r)}\right)
\le-\frac{\overline A(r)}{\overline V(r)^2} \int_{0}^{r} \frac{\overline V(u)}{\overline A(u)} \int_{S(u, \Sigma)}\phi\, dS \, du\le 0.
\end{align*}

\end{enumerate}
\end{theorem}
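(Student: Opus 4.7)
The plan is to follow closely the pattern used in Theorems \ref{thm: submfd vol} and \ref{thm: area vol est}, since Theorem \ref{thm: cpx submfd} already provides the pointwise density and Laplacian bounds we need, and a complex submanifold is automatically minimal (no boundary term from $H$ enters).

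First, I will verify that under the standing curvature assumptions $\widehat{\mathrm{H}}_{k_1}^{\ell,\perp}\ge 0$, $\widehat{\mathrm{H}}_{k_1}^{n-\ell-1,\perp}\ge 0$, $\widehat{\mathrm{H}}_{k_2/2}\ge 0$, the weight $\phi(r,\theta,z)$ appearing in Theorem \ref{thm: cpx submfd}(2) is non-negative, so that $e^{-\phi}\le 1$. For part (1) of the statement I will start from
\[
|S(r,\Sigma)|=\int_{\Sigma}\int_{S(N_z\Sigma)}F(r,\theta,z)\,d\theta\,dS,
\]
apply the bound $F(r,\theta,z)\le e^{-\phi(r,\theta,z)}\overline F(r)$ from Theorem \ref{thm: cpx submfd}(2), pull out $\overline F(r)$ (which is independent of $\theta,z$ since the $k_i$ are constants), and rewrite $\int_{S(N_z\Sigma)}(\cdot)\,d\theta=|\mathbb S^{2n-2\ell-1}|\fint_{S(N_z\Sigma)}(\cdot)\,d\theta$, noting that the sphere $S(N_z\Sigma)$ has dimension $2n-2\ell-1$ because $\Sigma$ is complex $\ell$-dimensional. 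The second inequality in (1) then follows immediately from $e^{-\phi}\le 1$.

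For part (2), I will mimic \eqref{eq: A'}--\eqref{ineq: A'2} almost verbatim. Writing $\overline A(r)=|\Sigma||\mathbb S^{2n-2\ell-1}|\overline F(r)$ and differentiating the ratio under the integral sign gives
\[
\frac{d}{dr}\!\left(\frac{|S(r,\Sigma)|}{\overline A(r)}\right)
=\frac{1}{\overline A(r)}\int_{\Sigma}\int_{S(N_z\Sigma)}
\!\left(\frac{F'(r,\theta,z)}{F(r,\theta,z)}-\frac{\overline F'(r)}{\overline F(r)}\right) F(r,\theta,z)\,d\theta\,dS.
\]
Applying Theorem \ref{thm: cpx submfd}(1), which says $F'/F=\Delta d_\Sigma \le (\log\overline F)'(r)-\psi(r,\theta,z)$ where $\psi=\partial_r\phi$ (this is the Kähler analogue of the computation leading to \eqref{eq: sing int}; the boundary terms at $t=r$ in the tangent, normal-horizontal, and normal-vertical pieces all vanish thanks to $\mathrm{tg}_{k_1}(r)-\mathrm{tg}_{k_1}(r)=0$ and $\mathrm{ct}_{k_i}(r)-\mathrm{ct}_{k_i}(r)=0$), and converting the integrand into an integral over $S(r,\Sigma)$ via the Fermi volume form $F(r,\theta,z)\,d\theta\,dS=dS$ on the level set, yields the first monotonicity inequality, with non-positivity coming from $\psi\ge 0$. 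For the volume ratio I will then write $|B(r,\Sigma)|=\int_0^r|S(u,\Sigma)|\,du$ and $\overline V(r)=\int_0^r\overline A(u)\,du$ and apply Lemma \ref{lem: dec2} with $f=|S(\cdot,\Sigma)|$, $g=\overline A$, exactly as in \eqref{ineq: V'}, to convert the area monotonicity into the claimed volume monotonicity.

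The only genuine content beyond book-keeping is the verification that $\psi\ge 0$ (equivalently, the integrand in $\psi$ from Theorem \ref{thm: cpx submfd}(1) is non-negative under the three curvature hypotheses), together with keeping the three distinct weight factors $c_{k_1}(t)^2/c_{k_1}(r)^2$, $s_{k_1}(t)^2/s_{k_1}(r)^2$ and $s_{k_2}(t)^2/s_{k_2}(r)^2$ properly organized in the Fermi-coordinate integration; everything else is routine adaptation from the Riemannian arguments already given.
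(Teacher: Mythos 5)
Your proposal is correct and is essentially the proof the paper intends: the theorem is stated without proof as the analogue of Theorem \ref{thm: submfd vol}, and your argument reproduces that proof verbatim (the density bound of Theorem \ref{thm: cpx submfd}(2) integrated over $S(N_z\Sigma)$ for part (1), using that $\overline F$ is independent of $(\theta,z)$ because a complex submanifold is minimal; the computation of \eqref{eq: A'}--\eqref{ineq: A'2} together with Lemma \ref{lem: dec2} for part (2)). One remark worth recording: your derivation correctly produces the surface integrand $\psi=\partial_r\phi$ from the Laplacian estimate of Theorem \ref{thm: cpx submfd}(1), whereas the statement (like the paper's own Theorem \ref{thm: submfd vol}(4)) writes $\phi$; both quantities are non-negative under the curvature hypotheses, so the monotonicity conclusions are unaffected, but the quantitative bound as literally printed follows only after replacing $\phi$ by $\partial_r\phi$ in the surface integrals --- this is a slip in the paper's statement rather than a gap in your argument.
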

\begin{remark}
It would also be interesting to look for quantitative comparison results for other types of special manifolds, such as quaternionic K\"ahler manifolds. For simplicity we will not do it here. We notice that there are sharp comparison results for quaternionic K\"ahler manifolds assuming a scalar curvature lower bound, as studied by Kong, Li and Zhou \cite{kong2008spectrum} with methods similar to \cite{li2005comparison}.
\end{remark}

\section{G\"unther-type theorems}\label{sec: gunther}
\subsection{Riemannian case}
We now give a generalization of G\"unther's theorem (\cite[Thm. 3.17]{gray2012tubes}), which gives a lower bound for the volume of the geodesic ball under a curvature upper bound. It is possible to work with a variable $k$ but for simplicity we assume $ k$ is constant in this section.

We define $B= \frac 12 g\odot g$, where $\odot$ is the Kulkarni-Nomizu product (\cite[p.47]{besse2008einstein}). More explicitly,
\begin{equation*}
\begin{split}
B(X, Y, Z, W)
=&\langle X, W\rangle\langle Y, Z\rangle- \langle X, Z\rangle\langle Y, W\rangle.
\end{split}
\end{equation*}
It is easy to see that $kB$ is the Riemann curvature tensor of a space form with curvature $ k$. Also define on $ (M, g)$ the $ 4$-tensor
\begin{align*}
\widehat{R}_k(X, Y, Z, W):=\langle R(X, Y)Z, W\rangle - kB(X, Y, Z, W).
\end{align*}
Clearly the curvature of $(M, g)$ is bounded above by $ k$ if and only if $ \widehat R_k(X, Y, Y, X)\le 0$. We will denote the metric of the simply connected space form $\overline M_k$ of curvature $ k$ by $ \overline g$.

\begin{theorem}\label{thm: gunther1}
Let $ g=dt^2+\beta_{ij}(t, \theta)d\theta^i d\theta^j$ in geodesic polar coordinates.
Let $x=(r, \theta)$ in geodesic polar coordinates centered at $p$.
Assume $ s_{k}>0$ on $ (0, r]$.
\begin{enumerate}
\item
We have
\begin{align*}
\Delta d_p(x)
\ge (n-1)\frac{s_k'(r)}{s_k(r)}-\sum_{i,j=1}^{n-1} \beta^{ij}(r, \theta) \int_{0}^{r}\widehat R_k\left(\partial_{\theta^i}, \partial_t, \partial_t, \partial_{\theta^j}\right) dt,
\end{align*}
where $ (\beta^{ij})=(\beta_{ij})^{-1}$.

\item
If $dV=F(r, \theta)dr d\theta$, then
\begin{equation*}
\begin{split}
F(r, \theta)\ge&
\exp \left[-\int_{0}^{r}\int_{0}^{\rho} \sum_{i,j=1}^{n-1}\beta^{ij}(\rho, \theta)\widehat{R}_k \left(\partial_{\theta^i}, \partial_t, \partial_t, \partial_{\theta^j}\right)dt \, d\rho \right] \overline F(r)
\end{split}
\end{equation*}
where $\overline F(r)=s_k(r)^{n-1}$.
\end{enumerate}
\end{theorem}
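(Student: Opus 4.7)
The plan is to rerun the Jacobi-field/index-form computation from Theorem~\ref{thm: est}, but with the direction of the index-lemma comparison reversed: we test the actual Jacobi fields of $(M,g)$ against the \emph{model} index form
\[
I_k(V,W):=\int_0^r\bigl[\langle V',W'\rangle-k\langle V,W\rangle\bigr]\,dt.
\]
As before, choose a parallel orthonormal frame $E_1,\dots,E_{n-1},E_n=\gamma_\theta'$ along $\gamma_\theta$, and let $\widetilde Y_i$ be the Jacobi field along $\gamma_\theta$ with $\widetilde Y_i(0)=0$ and $\widetilde Y_i(r)=E_i(r)$ (well-defined within the cut locus). Formula~\eqref{eq: log F} gives $\Delta d_p(x)=\sum_{i=1}^{n-1}I(\widetilde Y_i,\widetilde Y_i)$, and the pointwise identity $I_k(V,V)=I(V,V)+\int_0^r\widehat R_k(V,\gamma',\gamma',V)\,dt$ (valid for $V\perp\gamma'$) rewrites this as
\[
\Delta d_p(x)=\sum_{i=1}^{n-1} I_k(\widetilde Y_i,\widetilde Y_i)-\sum_{i=1}^{n-1}\int_0^r \widehat R_k(\widetilde Y_i,\gamma',\gamma',\widetilde Y_i)\,dt.
\]

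The key step is the lower bound $I_k(\widetilde Y_i,\widetilde Y_i)\ge s_k'(r)/s_k(r)$. Let $X_i(t)=\frac{s_k(t)}{s_k(r)}E_i(t)$, which shares the endpoint values with $\widetilde Y_i$ and satisfies the model Jacobi equation $X_i''+kX_i=0$. The computation $(s_k')^2-ks_k^2=(s_ks_k')'$ yields $I_k(X_i,X_i)=s_k'(r)/s_k(r)$. Writing $\widetilde Y_i=X_i+W_i$ with $W_i(0)=W_i(r)=0$, the Euler--Lagrange property of $X_i$ makes the cross term vanish, so $I_k(\widetilde Y_i,\widetilde Y_i)=I_k(X_i,X_i)+I_k(W_i,W_i)$. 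Under the standing assumption $s_k>0$ on $(0,r]$, the substitution $W_i=s_k(t)U_i(t)$ and integration by parts collapse $I_k(W_i,W_i)$ into $\int_0^r s_k(t)^2|U_i'(t)|^2\,dt\ge 0$. Summing over $i$ gives $\sum_i I_k(\widetilde Y_i,\widetilde Y_i)\ge (n-1)s_k'(r)/s_k(r)$.

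To translate the remaining sum into the $\beta^{ij}(r,\theta)$-contraction appearing in the statement, expand $\widetilde Y_i=\sum_k A_i^k\partial_{\theta^k}$ with constants $A_i^k$ determined at $t=r$. The requirement $\widetilde Y_i(r)=E_i(r)$ together with $\beta_{kl}(r)=\langle\partial_{\theta^k}(r),\partial_{\theta^l}(r)\rangle$ yields the matrix identity $A^TA=\beta(r)^{-1}$, hence $\sum_i A_i^kA_i^l=\beta^{kl}(r,\theta)$. Applied pointwise in $t$,
\[
\sum_i\widehat R_k(\widetilde Y_i,\gamma',\gamma',\widetilde Y_i)=\sum_{k,l}\beta^{kl}(r,\theta)\,\widehat R_k(\partial_{\theta^k},\partial_t,\partial_t,\partial_{\theta^l}).
\]
Combining the three displays proves part~(1). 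Part~(2) follows by integrating (1) from $0$ to $r$, using $\log(F/\overline F)\to 0$ as $r\to 0^+$ and $(\log\overline F)'=(n-1)s_k'/s_k$, and exponentiating, exactly as in Theorem~\ref{thm: est}\eqref{est2}.

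The \emph{main obstacle} is conceptual rather than computational: one must recognize that the correct comparison is against the model index form $I_k$, for which $\widetilde Y_i$ plays the role of a competitor rather than a minimizer. This reverses the direction of the index lemma used in Theorem~\ref{thm: est} and automatically produces the lower bound with the same $\widehat R_k$ error term and no sign hypothesis on the curvature. Once this reversal is identified, the rest reduces to the elementary nonnegativity $\int_0^r s_k^2|U'|^2\,dt\ge 0$ and the algebraic identification $\sum_i A_i^kA_i^l=\beta^{kl}(r,\theta)$.
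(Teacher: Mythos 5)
Your proof is correct and follows essentially the same route as the paper: both reverse the direction of the index-lemma comparison so that the actual Jacobi fields $\widetilde Y_i$ become competitors for the constant-curvature index form, and both use the contraction identity $\sum_i \widehat R_k(\widetilde Y_i,\gamma',\gamma',\widetilde Y_i)=\sum_{i,j}\beta^{ij}(r,\theta)\widehat R_k(\partial_{\theta^i},\partial_t,\partial_t,\partial_{\theta^j})$ to remove the dependence on the particular Jacobi fields. The only cosmetic difference is that the paper transplants the coefficient functions of $\widetilde Y_i$ to the model space form $\overline M_k$ and invokes the index lemma there, whereas you verify the minimizing property of $X_i(t)=\frac{s_k(t)}{s_k(r)}E_i(t)$ for $I_k$ directly via the decomposition $\widetilde Y_i=X_i+W_i$ and the substitution $W_i=s_kU_i$.
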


\begin{proof}
We use the same notation as in the proof of Theorem \ref{thm: est}.
Let $\overline \gamma$ be a geodesic segment of length $ r$ on $\overline M_k$ and $ \{\overline E_i\}_{i=1}^n$ be the parallel translation of a positive orthonormal basis along $ \overline \gamma$ such that $ \overline E_n=\overline \gamma'$. Suppose $Y_i(t)=\sum_{j=1}^{n-1}y_i^j (t)E_j(t)$. Then we define
$\overline Y_i (t)=\sum_{j=1}^{n-1} y_i^j (t)\overline E_j(t)$
and
$\overline X_i (t)=\sum_{j=1}^{n-1}\frac{s_k(t)}{s_k(r)}\overline E_i(t)$.
Note that $\overline Y_i (t)=\overline X_i (t)$ when $ t=0$ and $ r$. Denote the curvature of $\overline g$ by $ \overline R$, we have
\begin{equation}\label{eq: I2}
\begin{split}
I(Y_i, Y_i)
=&\int_{0}^{r}\left(\langle {Y_i}', {Y_i}'\rangle - R (Y_i, \gamma', \gamma', Y_i)\right) dt\\
=&\int_{0}^{r}\left(\langle {Y_i}', {Y_i}'\rangle - kB(Y_i, \gamma', \gamma', Y_i)-\widehat{R}_k (Y_i, \gamma', \gamma', Y_i)\right) dt\\
=&\int_{0}^{r}\left(\langle \overline {Y_i}', \overline {Y_i}'\rangle - \langle \overline R (\overline Y_i, \overline \gamma') \overline \gamma', \overline Y_i\rangle -\widehat{R}_k (Y_i, \gamma', \gamma', Y_i)\right) dt\\
=&I(\overline {Y_i}, \overline {Y_i})-\int_{0}^{r} \widehat{R}_k (Y_i, \gamma', \gamma', Y_i) dt.
\end{split}
\end{equation}
Notice that in geodesic polar coordinates,
$\frac{\partial}{\partial \theta^i}$ is a Jacobi field and
\begin{align}\label{eq: beta}
\sum_{i, j=1}^{n-1}\beta^{ij}(r, \theta)\widehat R_k\left(\left. \frac{\partial}{\partial \theta^i} \right|_{(t, \theta)}, \gamma'(t), \gamma'(t), \left. \frac{\partial}{\partial \theta^j}\right|_{(t, \theta)}\right)=\sum_{i=1}^{n-1} \widehat R_k(Y_i(t), \gamma'(t), \gamma'(t), Y_i(t))
\end{align}
is independent of the choice of spherical coordinates and $Y_i$ (recall $Y_i=Y_i^{r, \theta}$).
So using \eqref{eq: I2}, index lemma and \eqref{eq: log F} applied to $\overline X_i$, we have
\begin{equation}\label{eq: I3}
\begin{split}
\Delta d_p(x)=(\log F)'(r, \theta)
=&\sum_{i=1}^{n-1}I(Y_i, Y_i)\\
=&\sum_{i=1}^{n-1} \left(I(\overline Y_i, \overline Y_i)- \int_{0}^{r}\widehat R_k(Y_i, \gamma', \gamma', Y_i)dt\right)\\
\ge&\sum_{i=1}^{n-1} \left(I(\overline X_i, \overline X_i)- \int_{0}^{r}\widehat R_k(Y_i, \gamma', \gamma', Y_i) dt\right)\\
=& (n-1)\frac{s_k'(r)}{s_k(r)}- \sum_{i, j=1}^{n-1} \beta^{ij}(r, \theta) \int_{0}^{r}\widehat R_k\left(\partial_ {\theta^i}, \partial_t, \partial_t, \partial_{\theta^j}\right) dt.
\end{split}
\end{equation}
The lower bound for $F$ is straightforward.

\end{proof}

Notice the similarity with \eqref{ineq: lap d1} and that the quantity $\sum_{i, j}\beta^{ij}(r, \theta)\widehat R_k\left(\partial_ {\theta^i}, \partial_t, \partial_t, \partial_{\theta^j}\right)$ is independent of the choice of spherical coordinates. Although a volume lower bound is not guaranteed by an upper bound of the Ricci curvature, this quantity plays a role similar to $ \widehat {\mathrm{Ric}}_k$ in this case. Furthermore, we can upper bound $\sum_{i,j}\beta^{ij}(r, \theta)\widehat R_k(\partial _{\theta^i}, \partial _t, \partial _t, \partial _{\theta^j})$ if we further assume $\ell\le \mathrm{Rm}\le k$, by the classical Hessian comparison.

Similar to Theorem \ref{thm: area vol est}, we have the following result, which can be regarded as the relative version of the G\"unther's inequalilty.
\begin{theorem}\label{thm: gunther area vol est}
Suppose $r<\mathrm{inj}(p)$ and $ s_k>0$ on $ (0, r]$, then
\begin{equation*}
\frac{d}{dr} \left(\frac{|S_g(r, p)|}{|S_{\overline g}(r)|}\right)
\ge -\frac{1}{|S_{\overline g}(r)|} \int_{B_g(r, p)}\sum_{i, j=1}^{n-1}\beta^{ij}(r, \theta)\widehat{R}_k \left(\partial_{\theta^i}, \partial_t, \partial_t, \partial_{\theta^j}\right) dV
\end{equation*}
and
\begin{align*}
\frac{d}{dr}\left(\frac{|B_g(r, p)|} {|B_{\overline g}(r)|} \right)
\ge -\frac{\overline F(r)}{|B_{\overline g}(r)|^2} \int_{0}^{r} \frac{|B_{\overline g}(u)|}{\overline F(u)} \int_{B_g(u, p)}\sum_{i, j=1}^{n-1}\beta^{ij}(u, \theta)\widehat{R}_k \left(\partial_{\theta^i}, \partial_t, \partial_t, \partial_{\theta^j}\right) dV\, du.
\end{align*}

In both cases, the equality holds if and only if $B_g(r, p)$ is isometric to $ B_{\overline g}(r)$.
\end{theorem}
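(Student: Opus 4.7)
The plan is to mirror the proof of Theorem \ref{thm: area vol est}, but with the Laplacian lower bound of Theorem \ref{thm: gunther1} in place of the upper bound of Theorem \ref{thm: est}, which flips the inequality direction throughout. Write $A(r) = |S_g(r, p)|$, $\overline A(r) = |S_{\overline g}(r)| = |\mathbb S^{n-1}| \overline F(r)$, $V(r) = |B_g(r,p)|$, and $\overline V(r) = |B_{\overline g}(r)|$. A direct differentiation exactly as in \eqref{eq: A'} gives
\[
\left(\frac{A(r)}{\overline A(r)}\right)' = \frac{1}{\overline A(r)} \int_{S_g(r,p)} \left(\frac{F'(r,\theta)}{F(r,\theta)} - \frac{\overline F'(r)}{\overline F(r)}\right) dS,
\]
and since $\Delta d_p(x) = F'(r,\theta)/F(r,\theta)$, Theorem \ref{thm: gunther1} yields
\[
\frac{F'(r,\theta)}{F(r,\theta)} - \frac{\overline F'(r)}{\overline F(r)} \ge -\sum_{i,j} \beta^{ij}(r,\theta) \int_0^r \widehat R_k(\partial_{\theta^i}, \partial_t, \partial_t, \partial_{\theta^j})\, dt.
\]
Inserting this and rewriting the resulting iterated integral over $S_g(r,p) \times [0,r]$ as an integral over $B_g(r,p)$, in the same convention as \eqref{ineq: A'2}, gives the first stated inequality.

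For the volume quotient I would apply Lemma \ref{lem: dec2} with $f = A$ and $g = \overline A$, so that $\alpha(u) = A(u)/\overline A(u)$ and
\[
\frac{d}{dr} \left(\frac{V(r)}{\overline V(r)}\right) = \frac{\overline A(r)}{\overline V(r)^2} \int_0^r \overline V(u)\, \alpha'(u)\, du.
\]
Substituting the area estimate just obtained (with $r$ replaced by $u$) and using $\overline A(u) = |\mathbb S^{n-1}| \overline F(u)$ produces the second inequality, which is the direct analogue of \eqref{ineq: V'} with the inequality reversed and with $\sum_{i,j}\beta^{ij}\widehat R_k$ playing the role of $\widehat{\mathrm{Ric}}_k$.

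For the equality case, saturation at radius $r$ forces equality almost everywhere in the index-lemma step of the proof of Theorem \ref{thm: gunther1} for every $\theta$ and every $\rho \le r$. The equality case of the index lemma on $\overline M_k$ then forces $\overline Y_i \equiv \overline X_i$, so each radial Jacobi field $Y_i$ along $\gamma_\theta$ takes the form $\tfrac{s_k(t)}{s_k(\rho)} E_i(t)$. Feeding this back into the Jacobi equation shows that $\widehat R_k$ vanishes on the radial planes along every $\gamma_\theta$, and the Cartan--Ambrose--Hicks theorem (invoked exactly as in the proofs of Theorems \ref{thm: area vol est} and \ref{thm: kahler}) then promotes this pointwise curvature identity to an isometry $B_g(r,p) \cong B_{\overline g}(r)$.

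The hardest part will be the equality analysis: one must extract from the saturated index-lemma comparison enough curvature information on each tangent plane $\mathrm{span}(E_i(t), \gamma_\theta'(t))$---not merely on the planes spanned by the Jacobi fields themselves---to reconstruct the full curvature tensor of the constant-curvature model and feed it into Cartan--Ambrose--Hicks. The two differential inequalities themselves are routine once Theorem \ref{thm: gunther1} and Lemma \ref{lem: dec2} are in hand.
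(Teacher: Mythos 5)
Your proposal is correct and follows exactly the route the paper intends: the paper gives no separate proof of this theorem, stating only that it is ``similar to Theorem \ref{thm: area vol est}'', and your substitution of the Laplacian lower bound from Theorem \ref{thm: gunther1} into the computations \eqref{eq: A'}, \eqref{ineq: A'2} and \eqref{ineq: V'} (with all inequalities reversed, and with $\sum_{i,j}\beta^{ij}\widehat R_k$ in place of $\widehat{\mathrm{Ric}}_k$) is precisely that argument. Your closing worry about the equality case is unfounded: equality in the index lemma forces $Y_i(t)=\frac{s_k(t)}{s_k(r)}E_i(t)$ for every $i=1,\dots,n-1$, and since the $E_i(t)$ span $\gamma_\theta'(t)^\perp$, the Jacobi equation immediately yields $R(u,\gamma_\theta')\gamma_\theta'=ku$ for all $u\perp\gamma_\theta'$, which is exactly the curvature identity \eqref{eq: CAH} needed for the Cartan--Ambrose--Hicks theorem.
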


As an analogue of Proposition \ref{prop: isop}, we have
\begin{proposition}
If $\int_{B_g(\rho, p)}\sum_{i, j=1}^{n-1}\beta^{ij}(\rho, \theta)\widehat{R}_k \left(\partial_{\theta^i}, \partial_t, \partial_t, \partial_{\theta^j}\right) dV\le 0$ for all $\rho\in [0, r]$, then $\frac{|B_g(t, p)|}{|S_g(t, p)|} \le \frac{|B_{\overline g}(t)|}{|S_{\overline g}(t)|} $ for $t\in [0, r]$.
\end{proposition}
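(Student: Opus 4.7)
The plan is to piggy-back on the area-monotonicity half of Theorem \ref{thm: gunther area vol est}, and then convert an area comparison into an isoperimetric-ratio comparison by a one-variable integration, exactly mirroring the proof of Proposition \ref{prop: isop}.

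First I would fix $t \in [0, r]$ and apply the first inequality in Theorem \ref{thm: gunther area vol est} at every radius $\rho \in [0, t]$. The hypothesis
\[
\int_{B_g(\rho, p)} \sum_{i,j=1}^{n-1}\beta^{ij}(\rho, \theta)\,\widehat R_k(\partial_{\theta^i}, \partial_t, \partial_t, \partial_{\theta^j})\, dV \le 0
\]
makes the right-hand side of that inequality non-negative, so
\[
\frac{d}{d\rho}\!\left(\frac{|S_g(\rho, p)|}{|S_{\overline g}(\rho)|}\right) \ge 0
\]
on $[0, t]$. Hence $\rho \mapsto |S_g(\rho, p)|/|S_{\overline g}(\rho)|$ is non-decreasing on $[0, t]$, so for every $s \in [0, t]$,
\[
\frac{|S_g(s, p)|}{|S_{\overline g}(s)|} \le \frac{|S_g(t, p)|}{|S_{\overline g}(t)|},
\qquad \text{i.e.} \qquad |S_g(s, p)| \le \frac{|S_g(t, p)|}{|S_{\overline g}(t)|}\,|S_{\overline g}(s)|.
\]

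Then I would integrate the last inequality in $s$ over $[0, t]$. Using $|B_g(t, p)| = \int_0^t |S_g(s, p)|\, ds$ and $|B_{\overline g}(t)| = \int_0^t |S_{\overline g}(s)|\, ds$, this yields
\[
|B_g(t, p)| \le \frac{|S_g(t, p)|}{|S_{\overline g}(t)|}\,|B_{\overline g}(t)|,
\]
which rearranges to exactly the claimed isoperimetric inequality $|B_g(t, p)|/|S_g(t, p)| \le |B_{\overline g}(t)|/|S_{\overline g}(t)|$.

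Since the monotonicity of $|S_g(\cdot, p)|/|S_{\overline g}(\cdot)|$ is already supplied by Theorem \ref{thm: gunther area vol est}, there is no real obstacle here; the only thing to verify is that the sign convention matches (the Günther-type bound reverses Bishop--Gromov, so the inequality and the integral-hypothesis inequality both flip, keeping the argument formally identical to Proposition \ref{prop: isop}). The proof is therefore essentially a one-line deduction followed by a single integration in $s$.
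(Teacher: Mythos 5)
Your argument is correct and is exactly the intended one: the paper states this proposition "as an analogue of Proposition \ref{prop: isop}" without writing out a proof, and the proof of that proposition is precisely your two steps (monotonicity of the area ratio from the relevant comparison theorem, here Theorem \ref{thm: gunther area vol est} with the sign reversed, followed by integrating $|S_g(s,p)| \le \frac{|S_g(t,p)|}{|S_{\overline g}(t)|}|S_{\overline g}(s)|$ in $s$ over $[0,t]$). No issues.
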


\begin{theorem}\label{thm: gunther submfd}
Suppose $\Sigma$ is an $ \ell$-dimensional submanifold of $ (M, g)$. Let $ d_\Sigma: M\to \mathbb R$ be the distance from $ \Sigma$, $ g=dr^2+ \sum_{i, j=1}^{n-\ell-1}\beta_{ij}(r, \theta, z)d\theta^i d\theta^j+ \sum_{i, j=1}^\ell \alpha_{ij}(r, \theta, z)dz^i dz^j$ in Fermi coordinates w.r.t. $ \Sigma$.
Let $x=(r, \theta, z)$ in Fermi coordinates.
Assume $ s_k>0$ on $ (0, r]$ and that the first zero of $ t\mapsto c_k(t)+\lambda s_k (t)$ (if exists) appears no earlier than the cut distance in the direction $ \theta$, where $ \lambda=\min_{v\in S_p\Sigma} A_\theta(v, v)$.

\begin{enumerate}
\item\label{item: gunther submfd1}
We have
\begin{align*}
\Delta d_\Sigma(x)
\ge&
(\log \overline F)'(r, \theta, z)-\phi(r, \theta, z)
\end{align*}
where $\overline F(r, \theta, z) =s_k(r)^{n-\ell-1} \det \left[ c_k(r)\mathrm{Id}+s_k(r)A_\theta \right]$,
$ (\alpha^{ij})=(\alpha_{ij})^{-1}$ and
\begin{align*}
\phi(r, \theta, z)=
\sum_{i, j=1}^{n-\ell-1}\beta^{ij}(r, \theta, z) \int_{0}^{r} \widehat R_k(\partial_{\theta^i}, \partial_t, \partial_t, \partial_{\theta^j})dt+\sum_{i, j=1}^{\ell}\alpha^{ij}(r, \theta, z)\int_{0}^{r}\widehat R_k\left({\partial_{z^i}}, \partial_t, \partial_t, {\partial_{z^j}}\right) dt.
\end{align*}
Here we regard $A_\theta$ as a $(1, 1)$-tensor.
\item\label{item: gunther submfd2}
If $dV=F(r, \theta, z)dr \,d\theta\, dz$, then
$F(r, \theta, z)\ge \overline F(r, \theta, z)\exp\left[ -\int_{0}^{r}\phi(\rho, \theta, z)d\rho\right]$.
\item\label{item: gunther submfd3}
Let $\overline A(r)=\int_\Sigma \int_{S(N_z\Sigma)} \overline F(r, \theta, z)d\theta \,dz$ and $\overline V(r)=\int_{0}^{r}\overline A (t)dt$.
Suppose $ \overline F '(\rho, \theta, z)\ge 0 $ and $\phi(\rho, \theta, z)\le 0$ for all $\rho\in(0, r_0)$ and $(\theta, z)\in S(N\Sigma)$.
Then on $[0, r_1]$, $|S_g(r, \Sigma)|-\overline A(r)$ is non-negative and non-decreasing, and $|B_g(r, \Sigma)|-\overline V(r)$ is non-negative, non-decreasing and convex. Here $r_1=\min\{r_0, \mathrm{inj}(\Sigma) \}$.
\end{enumerate}

\end{theorem}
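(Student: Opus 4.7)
The plan is to extend the G\"unther argument of Theorem \ref{thm: gunther1} to the submanifold setting, via the tangential/normal split already used in Theorem \ref{thm: submfd}. Fix $x=(r,\theta,z)$ and the normal geodesic $\gamma=\gamma_\theta$ emanating from $z\in\Sigma$. I would choose a parallel orthonormal frame $\{E_j\}_{j=1}^n$ along $\gamma$ with $E_1(0),\ldots,E_\ell(0)$ an orthonormal basis of $T_z\Sigma$ that diagonalizes the shape operator $A_\theta$ with eigenvalues $\lambda_1,\ldots,\lambda_\ell$, with $E_{\ell+1}(0),\ldots,E_{n-1}(0)\in\theta^\perp\cap N_z\Sigma$, and $E_n=\gamma'$. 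For $1\le i\le\ell$ let $Y_i$ be the $\Sigma$-adapted Jacobi field with $Y_i(r)=E_i(r)$; for $\ell+1\le i\le n-1$ let $Y_i$ satisfy $Y_i(0)=0$ and $Y_i(r)=E_i(r)$. Proposition \ref{prop: hess}, the identity \eqref{eq: log FH} for the tangential block, and the Gauss-lemma reduction \eqref{eq: two hess} for the normal block then combine to give
\[
\Delta d_\Sigma(x)\,=\,(\log F)'(r,\theta,z)\,=\,\sum_{i=1}^{\ell}I_\Sigma(Y_i,Y_i)\,+\,\sum_{i=\ell+1}^{n-1}I(Y_i,Y_i).
\]

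The G\"unther step transfers each $Y_i$ to the model space form $\overline M_k$ via a parallel-frame copy, exactly as in \eqref{eq: I2}. I would fix an isometry $\iota\colon T_zM\to T_{\overline z}\overline M_k$ with $\iota(\theta)=\overline\theta$, let $\overline E_j$ be the parallel extension of $\iota(E_j(0))$ along $\overline\gamma_{\overline\theta}$, and set $\overline Y_i(t)=\sum_j y_i^j(t)\,\overline E_j(t)$ whenever $Y_i(t)=\sum_j y_i^j(t)\,E_j(t)$. Prescribing at $\overline z$ the abstract shape operator $\iota\circ A_\theta\circ\iota^{-1}$ in direction $\overline\theta$ makes the boundary contributions match, and since $|Y'|^2$, $|Y|^2$ and $B(Y,\gamma',\gamma',Y)=|Y|^2$ depend only on the coefficients, one obtains
\[
I^M(Y_i,Y_i)\,=\,I^{\overline M_k}(\overline Y_i,\overline Y_i)\,-\,\int_0^r\widehat R_k(Y_i,\gamma',\gamma',Y_i)\,dt
\]
for both blocks. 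Set $\overline X_i(t)=\frac{c_k(t)+\lambda_i s_k(t)}{c_k(r)+\lambda_i s_k(r)}\overline E_i(t)$ for $i\le\ell$ and $\overline X_i(t)=\frac{s_k(t)}{s_k(r)}\overline E_i(t)$ for $i>\ell$; each $\overline X_i$ is the (adapted, resp.\ point-based) Jacobi field on $\overline M_k$ with the same boundary data as $\overline Y_i$, and the hypothesis on $\lambda=\min_v A_\theta(v,v)$ rules out focal points in $[0,r]$ so the index lemma gives $I^{\overline M_k}(\overline Y_i,\overline Y_i)\ge I^{\overline M_k}(\overline X_i,\overline X_i)$. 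A direct integration by parts (cf.\ \eqref{eq: IX2}) then computes
\[
\sum_i I^{\overline M_k}(\overline X_i,\overline X_i)\,=\,(n-\ell-1)\frac{s_k'(r)}{s_k(r)}+\sum_{i=1}^{\ell}\frac{c_k'(r)+\lambda_i s_k'(r)}{c_k(r)+\lambda_i s_k(r)}\,=\,(\log\overline F)'(r,\theta,z)
\]
with $\overline F=s_k^{n-\ell-1}\det[c_k\mathrm{Id}+s_k A_\theta]$.

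The remaining task is to recognize $\sum_i\int_0^r\widehat R_k(Y_i,\gamma',\gamma',Y_i)\,dt$ as $\phi(r,\theta,z)$. Expanding in the Fermi-coordinate Jacobi-field bases, $Y_i(t)=\sum_j D_{ij}\partial_{z^j}|_{(t,\theta,z)}$ for $i\le\ell$ and $Y_{\ell+j}(t)=\sum_l C_{jl}\partial_{\theta^l}|_{(t,\theta,z)}$, with fixed matrices $D,C$ determined at $t=r$; the orthonormality of $\{E_i(r)\}$ forces $D^TD=\alpha^{-1}(r,\theta,z)$ and $C^TC=\beta^{-1}(r,\theta,z)$, and the same algebra as in the argument following \eqref{eq: beta} identifies the sum with $\phi$. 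This establishes \eqref{item: gunther submfd1}; integrating the inequality $(\log F)'\ge(\log\overline F)'-\phi$ in $r$ and using $F/\overline F\to 1$ as $r\to 0^+$ (both reduce to the Jacobian of the normal exponential map along $\Sigma$) gives \eqref{item: gunther submfd2}. For \eqref{item: gunther submfd3}, the hypotheses $\overline F'\ge 0$ and $\phi\le 0$ give $F\ge\overline F$ from \eqref{item: gunther submfd2} and $(\log F)'\ge(\log\overline F)'\ge 0$ from \eqref{item: gunther submfd1}, hence $F'=F(\log F)'\ge\overline F(\log\overline F)'=\overline F'$ pointwise; integrating over $S(N\Sigma)$ and $\Sigma$ transfers these to $|S_g(r,\Sigma)|\ge\overline A(r)$ and $\tfrac{d}{dr}|S_g(r,\Sigma)|\ge\overline A'(r)$, from which the non-negativity, monotonicity, and convexity of $|B_g(r,\Sigma)|-\overline V(r)$ follow by one more integration. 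The main subtlety lies in the transfer step: ensuring that the boundary contribution $A_\theta(Y_i(0),Y_i(0))$ passes cleanly to the model side, which is the reason for prescribing the matching abstract shape operator $A_{\overline\theta}$ on $\overline M_k$.
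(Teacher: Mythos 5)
Your proposal is correct and follows essentially the same route as the paper: transfer the adapted and point-based Jacobi fields to the space form along a parallel frame diagonalizing $A_\theta$, match the second fundamental form on the model side so the boundary terms in the index form cancel, apply the index lemma there with the comparison fields $\overline X_i$, and identify the curvature defect with $\phi$ via the coordinate Jacobi fields $\partial_{z^i}$, $\partial_{\theta^j}$ exactly as in \eqref{eq: beta}. The deduction of \eqref{item: gunther submfd2} by integration and of \eqref{item: gunther submfd3} from $F\ge\overline F$ and $F'\ge\overline F'$ also matches the paper's argument.
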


\begin{proof}
We use the notation in the proof of Theorem \ref{thm: submfd}. Furthermore assume $\{E_i\}$ diagonalizes $ A_\theta$ with eigenvalues $ \{\lambda_i\}$.
Choose $\overline \Sigma$ to be a (local) $ \ell$-dimensional submanifold in $ \overline M$ such that at $ \overline p\in \overline \Sigma$ and for $ \overline \theta\in S(N_{\overline p}\overline \Sigma)$, the second fundamental form $ \overline A_ {\overline \theta} $ agrees with $ A_ \theta $. So there exists an orthonormal basis $ \overline E_i$ of $ T_{\overline p}\overline \Sigma$ such that $ \overline A_{\overline \theta}(\overline E_i, \overline E_j) =\lambda_i \delta_{ij}$. As before, parallel transport $\overline E_i$ along $ \overline \gamma=\overline \gamma_{\overline \theta}$. For $ Y_i(t)=\sum_{j=1}^\ell y_i^j (t)E_j(t)$, define $ \overline Y_i(t)=\sum_{j=1}^\ell y_i^j (t)\overline E_j(t)$ and $ \overline X_i(t)= \frac{c_k(t)+\lambda_i s_k(t)}{c_k(r)+\lambda_i s_k(r)}\overline E_i(t)$. Note that $ \overline X_i$ are adapted to $ \overline \Sigma$ along $ \overline \gamma$. So by index lemma and \eqref{eq: I2},
\begin{equation*}
\begin{split}
I_\Sigma(Y_i, Y_i)
=&\int_{0}^{r}\left(\langle {Y_i}', {Y_i}'\rangle - R (Y_i, \gamma', \gamma', Y_i)\right) dt+A_\theta(Y_i(0), Y_i(0)) \\
=&I_{\overline \Sigma}(\overline {Y_i}, \overline {Y_i})-\int_{0}^{r} \widehat{R}_k (Y_i, \gamma', \gamma', Y_i) dt+A_\theta(Y_i(0), Y_i(0)) - \overline A_{\overline \theta}(\overline Y_i(0), \overline Y_i(0))\\
\ge& I_{\overline \Sigma}(\overline X_i, \overline X_i)- \int_{0}^{r}\widehat R_k(Y_i, \gamma', \gamma', Y_i) dt \\
=& \frac{c_k'(r)+\lambda_i s_k'(r)}{c_k(r)+\lambda_i s_k(r)}- \int_{0}^{r}\widehat R_k(Y_i, \gamma', \gamma', Y_i) dt.
\end{split}
\end{equation*}
In Fermi coordinates, $\{\frac{\partial}{\partial z^i}\}_{i=1}^\ell $ are Jacobi fields adapted to $ \Sigma$ (\cite[Lem. 2.9]{gray2012tubes}). So as in Theorem \ref{thm: gunther1}, we have
$\sum_{i=1}^\ell I_\Sigma(Y_i, Y_i)
\ge \sum_{i=1}^\ell\frac{c_k'(r)+\lambda_i s_k'(r)}{c_k(r)+\lambda_i s_k(r)}- \sum_{i, j=1}^{\ell}\alpha^{ij}(r, \theta, z)\int_{0}^{r}\widehat R_k\left({\partial_{z^i}}, \partial_t, \partial_t, {\partial_{z^j}}\right) dt$.

Using \eqref{eq: I3} and \eqref{eq: two hess}, we can then proceed as in Theorem \ref{thm: submfd} to show that
\begin{align*}
\Delta d_\Sigma(x)
\ge&
(n-\ell-1)\frac{s_k'(r)}{s_k(r)}
+\sum_{i=1}^\ell\frac{c_k'(r)+\lambda_i s_k'(r)}{c_k(r)+\lambda_i s_k(r)}\\
&-\sum_{i, j=1}^{n-\ell-1}\beta^{ij}(r, \theta, z) \int_{0}^{r} \widehat R_k(\partial_{\theta^i}, \partial_t, \partial_t, \partial_{\theta^j})dt
- \sum_{i, j=1}^{\ell}\alpha^{ij}(r, \theta, z)\int_{0}^{r}\widehat R_k\left({\partial_{z^i}}, \partial_t, \partial_t, {\partial_{z^j}}\right) dt.
\end{align*}
Note that
$\overline F(r, \theta, z)
=s_k(r)^{n-\ell-1} \prod_{i=1}^{\ell} (c_k(r)+\lambda_i s_k(r))
=s_k(r)^{n-\ell-1}\det \left[ c_k(r)\mathrm{Id}+s_k(r)A_\theta \right]
$.
\eqref{item: gunther submfd1} and \eqref{item: gunther submfd2} follow. \eqref{item: gunther submfd3} is similar to Theorem \ref{thm: submfd vol} \eqref{item: submfd3}.
\end{proof}
\subsection{K\"ahler case}
On a K\"ahler manifold $(M, g, J)$, define the $4$-tensor
\begin{align*}
&C(X, Y, Z, W)\\
=&\frac{1}{2} \left[ \langle X, W\rangle \langle Y, Z\rangle - \langle X, Z\rangle \langle Y, W\rangle
+\langle X, JW\rangle \langle Y, JZ\rangle- \langle X, JZ\rangle \langle Y, JW\rangle +2\langle X, JY\rangle \langle W, JZ\rangle
\right].
\end{align*}
Let $k\in \mathbb R$.
Then $kC$ is the Riemann curvature tensor of a complex space form with holomorphic sectional curvature $k$ (\cite[Prop. 7.2]{kobayashi1969foundations}). Also define on $ M$ the $ 4$-tensor (which is different from $\widehat R_k$)
\begin{align*}
\widehat {\mathrm{R}}_k(X, Y, Z, W):=\langle R(X, Y)Z, W\rangle -k C(X, Y, Z, W).
\end{align*}

\begin{theorem}\label{thm: cpx gunther1}
Let $(M,g)$ be a K\"ahler manifold and $ g=dt^2+\beta_{ij}(t, \theta)d\theta^i d\theta^j$ in geodesic polar coordinates centered at $p$.
Let $x=(r, \theta)$ in geodesic polar coordinates.
Assume there is no cut point of $p$ along $ \gamma_\theta$ on $ [0, r]$ and $ s_{4k}>0$ on $ (0, r]$.
\begin{enumerate}
\item
We have
\begin{equation*}
\begin{split}
\Delta d_p(x)
\ge&
(2n-2)\frac{s_{k}'(r)}{s_{k}(r)} + \frac{s_{4k}'(r)}{s_{4k}(r)}- \sum_{i, j=1}^{2n-1} \beta^{ij}(r, \theta) \int_{0}^{r}\widehat {\mathrm{R}}_k\left(\partial_ {\theta^i}, \partial_t, \partial_t, \partial_{\theta^j}\right) dt.
\end{split}
\end{equation*}

\item
If $dV=F(r, \theta)dr d\theta$, then
\begin{equation*}
\begin{split}
F(r, \theta)\ge&
\exp \left[-\int_{0}^{r}\int_{0}^{\rho} \sum_{i,j=1}^{2n-1} \beta^{ij}(\rho)\widehat{\mathrm{R}}_k \left(\partial_{\theta^i}, \partial_t, \partial_t, \partial_{\theta^j}\right)dt \, d\rho \right] \overline F(r)
\end{split}
\end{equation*}
where $\overline F(r)=s_k(r)^{2n-2}s_{4k}(r)$.
\end{enumerate}
\end{theorem}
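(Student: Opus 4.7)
The plan is to adapt the Riemannian G\"unther argument of Theorem \ref{thm: gunther1} by substituting the $J$-compatible parallel frame from Theorem \ref{thm: kahler} and the complex space-form model $\overline M_k$. The key algebraic observation is that, by definition, $\overline R_{\overline g}=kC$ on $\overline M_k$, so the residual $4$-tensor $\widehat{\mathrm{R}}_k$ vanishes identically there, and $\overline M_k$ plays the role that the real space form played in Theorem \ref{thm: gunther1}.

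Along $\gamma=\gamma_\theta$, I would fix a parallel orthonormal frame $\{F_1,JF_1,\ldots,F_{n-1},JF_{n-1},F_{2n-1},F_{2n}\}$ with $F_{2n}=\gamma'$ and $JF_{2n-1}=\gamma'$ (i.e.\ $F_{2n-1}=-J\gamma'$), so that $\{F_1,\ldots,F_{2n-2}\}$ spans the $J$-invariant complement of $\mathrm{span}(\gamma',J\gamma')$ pointwise. Let $Y_i$ be the Jacobi field along $\gamma$ with $Y_i(0)=0$ and $Y_i(r)=F_i(r)$ for $i=1,\ldots,2n-1$; as in \eqref{eq: log F}, $\Delta d_p(x)=(\log F)'(r,\theta)=\sum_i I(Y_i,Y_i)$.

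Next, in $\overline M_k$ pick a geodesic $\overline\gamma$ together with a parallel orthonormal frame $\{\overline F_i\}$ whose $J$-action reproduces that of $\{F_i\}$; this is possible because $\overline J$ is parallel along $\overline\gamma$ by Lemma \ref{lem: parallel}. Writing $Y_i=\sum_j y_i^j(t)F_j(t)$, set $\overline Y_i=\sum_j y_i^j(t)\overline F_j(t)$. Since both frames are parallel and the pairings $\langle F_a,JF_b\rangle=\langle\overline F_a,\overline J\overline F_b\rangle$ match by construction, one checks $|Y_i'|^2=|\overline Y_i'|^2$ and $C(Y_i,\gamma',\gamma',Y_i)=C(\overline Y_i,\overline\gamma',\overline\gamma',\overline Y_i)$. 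Splitting $R=kC+\widehat{\mathrm{R}}_k$ on $M$ while using $\overline R=kC$ on $\overline M_k$, the argument of \eqref{eq: I2} produces
\[
I(Y_i,Y_i)=I(\overline Y_i,\overline Y_i)-\int_0^r\widehat{\mathrm{R}}_k(Y_i,\gamma',\gamma',Y_i)\,dt.
\]
In $\overline M_k$, the Jacobi fields $\overline X_i$ with $\overline X_i(0)=0$ and $\overline X_i(r)=\overline F_i(r)$ are explicit: $\overline X_i=\frac{s_k(t)}{s_k(r)}\overline F_i$ for $i\le 2n-2$ (horizontal) and $\overline X_{2n-1}=\frac{s_{4k}(t)}{s_{4k}(r)}\overline F_{2n-1}$ (vertical), as recorded in the paragraph following Lemma \ref{lem: parallel}. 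The index lemma applied in $\overline M_k$ therefore gives $I(\overline X_i,\overline X_i)\le I(\overline Y_i,\overline Y_i)$, while a direct integration by parts yields $\sum_i I(\overline X_i,\overline X_i)=(2n-2)\frac{s_k'(r)}{s_k(r)}+\frac{s_{4k}'(r)}{s_{4k}(r)}=(\log\overline F)'(r)$.

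Summing over $i$ and rewriting $\sum_i\widehat{\mathrm{R}}_k(Y_i,\gamma',\gamma',Y_i)=\sum_{i,j}\beta^{ij}(r,\theta)\widehat{\mathrm{R}}_k(\partial_{\theta^i},\partial_t,\partial_t,\partial_{\theta^j})$ by the same invariance argument used for \eqref{eq: beta}, yields part (1). Part (2) then follows by integrating $(\log F)'(\rho,\theta)$ from $0$ to $r$ together with the standard boundary behavior $\log F(t,\theta)-\log\overline F(t)\to 0$ as $t\to 0^+$. The main technical step is the $J$-matching assertion $C(Y_i,\gamma',\gamma',Y_i)=C(\overline Y_i,\overline\gamma',\overline\gamma',\overline Y_i)$: it rests on the parallelism of $J$ along $\gamma$ (so that the scalars $\langle F_a,JF_b\rangle$ are constant) and the analogous property of $\overline J$ in $\overline M_k$, which allows us to prescribe matching algebraic data at $t=0$ and then transport them by parallel translation, making the definition of $\widehat{\mathrm{R}}_k$ the exact tensor that captures the deviation from the model.
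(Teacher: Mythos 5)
Your proposal is correct and follows essentially the same route as the paper: the same $J$-compatible parallel frame, the decomposition $R=kC+\widehat{\mathrm{R}}_k$ against the complex space form model, transplantation of $Y_i$ to $\overline Y_i$, the index lemma applied to the explicit model Jacobi fields $\overline X_i$, and the coordinate-invariance identity to rewrite $\sum_i\widehat{\mathrm{R}}_k(Y_i,\gamma',\gamma',Y_i)$ in terms of $\beta^{ij}$. Your explicit justification of $C(Y_i,\gamma',\gamma',Y_i)=C(\overline Y_i,\overline\gamma',\overline\gamma',\overline Y_i)$ via the parallelism of $J$ and Lemma \ref{lem: parallel} is a point the paper leaves implicit, and it is the right one.
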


\begin{proof}

We use the same notation as in the proof of Theorem \ref{thm: est}.
Let $\{F_i\}_{i=1}^{2n}$ be defined as in the proof of Theorem \ref{thm: kahler}. Let $\overline \gamma$ be a geodesic segment of length $ r$ on $\overline M_k$ and $\{\overline F_i\}_{i=1}^{2n}$ be defined analogously on $\overline \gamma$.
Suppose $Y_i(t)=\sum_{j=1}^{2n-1}y_i^j (t)F_j(t)$, $i=1, \cdots, 2n-1$. Then we define
$\overline Y_i (t)=\sum_{j=1}^{2n-1} y_i^j (t)\overline F_j(t)$
and
$\overline X_i (t)=\sum_{j=1}^{2n-1}\frac{s_{k}(t)}{s_{k}(r)}\overline F_i(t)$ for $i=1, \cdots, 2n-2$ and $\overline X_{2n-1}(t)=\frac{s_{4k}(t)}{s_{4k}(r)}\overline F_{2n-1}(t)$.
Note that $\overline Y_i (t)=\overline X_i (t)$ when $t=0$ and $ r$.
As in \eqref{eq: I2}, we have
\begin{equation}\label{eq: I2 cpx}
\begin{split}
I(Y_i, Y_i)
=&I(\overline {Y_i}, \overline {Y_i})-\int_{0}^{r} \widehat{\mathrm R}_k (Y_i, \gamma', \gamma', Y_i) dt.
\end{split}
\end{equation}
As in \eqref{eq: beta},
$\sum_{i, j=1}^{2n-1}\beta^{ij}(r, \theta)\widehat {\mathrm{R}}_k\left(\frac{\partial}{\partial \theta^i} , \gamma'(t), \gamma'(t), \frac{\partial}{\partial \theta^j}\right)=\sum_{i=1}^{2n-1} \widehat {\mathrm{R}}_k(Y_i(t), \gamma'(t), \gamma'(t), Y_i(t))
$.
So using \eqref{eq: I2 cpx}, index lemma and \eqref{eq: log F}, similar to Theorem \ref{thm: gunther1}, we have
\begin{equation*}\label{eq: I3 cpx}
\begin{split}
\Delta d_p(x)=(\log F)'(r, \theta)
\ge& (2n-2)\frac{s_{k}'(r)}{s_{k}(r)} + \frac{s_{4k}'(r)}{s_{4k}(r)}- \sum_{i, j=1}^{2n-1} \beta^{ij}(r, \theta) \int_{0}^{r}\widehat {\mathrm{R}}_k\left(\partial_ {\theta^i}, \partial_t, \partial_t, \partial_{\theta^j}\right) dt.
\end{split}
\end{equation*}
The lower bound for $F$ is straightforward.

\end{proof}

The K\"ahler analogue of Theorem \ref{thm: gunther area vol est} is the following
\begin{theorem}\label{thm: cpx gunther area vol est}
With the same assumptions in Theorem \ref{thm: cpx gunther1},
\begin{equation*}
\frac{d}{dr} \left(\frac{|S_g(r, p)|}{|S_{\overline g}(r)|}\right)
\ge -\frac{1}{|S_{\overline g}(r)|} \int_{B_g(r, p)}\sum_{i, j=1}^{2n-1}\beta^{ij}(r, \theta)\widehat{\mathrm{R}}_k \left(\partial_{\theta^i}, \partial_t, \partial_t, \partial_{\theta^j}\right) dV
\end{equation*}
and
\begin{align*}
\frac{d}{dr}\left(\frac{|B_g(r, p)|} {|B_{\overline g}(r)|} \right)
\ge -\frac{\overline F(r)}{|B_{\overline g}(r)|^2} \int_{0}^{r} \frac{|B_{\overline g}(u)|}{\overline F(u)} \int_{B_g(u, p)}\sum_{i, j=1}^{2n-1}\beta^{ij}(u, \theta)\widehat{\mathrm{R}}_k \left(\partial_{\theta^i}, \partial_t, \partial_t, \partial_{\theta^j}\right) dV\, du.
\end{align*}
In particular, if
$\displaystyle \int_{B_g(\rho, p)}\sum_{i, j=1}^{2n-1} \beta^{ij}(\rho, \theta)\widehat{\mathrm{R}}_k \left(\partial_{\theta^i}, \partial_t, \partial_t, \partial_{\theta^j}\right) dV\le 0$
for all $\rho\in (0, r)$,
then
$\frac{| B_g(\rho, p)|}{| B_{\overline g}(\rho)|}$ is non-decreasing
on $(0, r)$.

\end{theorem}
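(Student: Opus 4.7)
The plan is to mirror the arguments used for Theorem \ref{thm: area vol est} and its G\"unther analogue Theorem \ref{thm: gunther area vol est}, feeding in the pointwise Laplacian lower bound from the K\"ahler G\"unther Theorem \ref{thm: cpx gunther1} in place of its Riemannian counterpart. The two inequalities to be proved are structurally identical to the Riemannian case up to the change of model volume density $\overline F(r)=s_k(r)^{2n-2}s_{4k}(r)$ and the switch of sign (a lower bound replacing an upper bound), so the steps transcribe with minimal modification.

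First I would write $A(r)=|S_g(r,p)|=\int_{\mathbb S^{2n-1}} F(r,\theta)\,d\theta$ and $\overline A(r)=|S_{\overline g}(r)|$ and compute, exactly as in \eqref{eq: A'},
\begin{equation*}
\left(\frac{A(r)}{\overline A(r)}\right)'
=\frac{1}{\overline A(r)}\int_{S_g(r,p)}\left(\frac{F'(r,\theta)}{F(r,\theta)}-\frac{\overline F'(r)}{\overline F(r)}\right)dS.
\end{equation*}
Since $\frac{F'(r,\theta)}{F(r,\theta)}=\Delta d_p(x)$ and $\frac{\overline F'(r)}{\overline F(r)}=(2n-2)\frac{s_k'(r)}{s_k(r)}+\frac{s_{4k}'(r)}{s_{4k}(r)}$, Theorem \ref{thm: cpx gunther1} yields pointwise
\begin{equation*}
\frac{F'(r,\theta)}{F(r,\theta)}-\frac{\overline F'(r)}{\overline F(r)}
\ge -\sum_{i,j=1}^{2n-1}\beta^{ij}(r,\theta)\int_0^r\widehat{\mathrm R}_k\bigl(\partial_{\theta^i},\partial_t,\partial_t,\partial_{\theta^j}\bigr)\,dt.
\end{equation*}
Integrating over $S_g(r,p)$, and converting the iterated integral $\int_{S_g(r,p)}\int_0^r(\cdot)\,dt\,dS$ into a volume integral over $B_g(r,p)$ in the same way that \eqref{ineq: A'2} is rewritten in the Riemannian proof (using that the contraction $\sum_{i,j}\beta^{ij}\widehat{\mathrm R}_k(\partial_{\theta^i},\partial_t,\partial_t,\partial_{\theta^j})$ is intrinsic, hence globally defined on $B_g(r,p)$), gives the first inequality of the theorem.

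For the volume version, set $V(r)=|B_g(r,p)|=\int_0^rA(u)\,du$ and $\overline V(r)=|B_{\overline g}(r)|=\int_0^r\overline A(u)\,du$ and apply Lemma \ref{lem: dec2} with $f=A$, $g=\overline A$, $a=0$:
\begin{equation*}
\frac{d}{dr}\!\left(\frac{V(r)}{\overline V(r)}\right)
=\frac{\overline A(r)}{\overline V(r)^{2}}\int_0^r\overline V(u)\,\frac{d}{du}\!\left(\frac{A(u)}{\overline A(u)}\right)du.
\end{equation*}
Substituting the just-proved lower bound for $\frac{d}{du}(A/\overline A)$ and using $\overline A(u)=|\mathbb S^{2n-1}|\overline F(u)$ produces the stated inequality; monotonicity of $|B_g(\rho,p)|/|B_{\overline g}(\rho)|$ under the sign condition on $\int_{B_g}\sum\beta^{ij}\widehat{\mathrm R}_k$ follows immediately.

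The only genuine subtlety, rather than obstacle, is ensuring that the reduction $\int_{S_g(r,p)}\!\int_0^r\!\cdots\,dt\,dS\rightsquigarrow\int_{B_g(r,p)}\!\cdots\,dV$ is carried out with the correct intrinsic contraction; this is handled precisely as in the proof of Theorem \ref{thm: gunther1}, where $\sum_{i,j}\beta^{ij}\widehat{\mathrm R}_k(\partial_{\theta^i},\partial_t,\partial_t,\partial_{\theta^j})$ was shown to be independent of the choice of spherical coordinates. Apart from this bookkeeping, no new ideas beyond those in Theorem \ref{thm: area vol est} and Theorem \ref{thm: cpx gunther1} are required.
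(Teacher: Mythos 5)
Your proposal is correct and is exactly the argument the paper intends: the paper gives no separate proof for this theorem, presenting it as the K\"ahler analogue of Theorem \ref{thm: gunther area vol est}, which in turn transcribes the proof of Theorem \ref{thm: area vol est} with the Laplacian lower bound of Theorem \ref{thm: cpx gunther1} (and the model density $\overline F(r)=s_k(r)^{2n-2}s_{4k}(r)$) in place of the upper bound \eqref{ineq: lap d1}. Your computation of $(A/\overline A)'$, the conversion of the iterated integral to a volume integral as in \eqref{ineq: A'2}, and the use of Lemma \ref{lem: dec2} for the ball version all match the paper's template.
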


\begin{theorem}\label{thm: cpx gunther submfd}
Suppose $\Sigma$ is a complex submanifold of a K\"ahler manifold $ M$ with $ \dim_{\mathbb C}(\Sigma)=\ell$. Let $ d_\Sigma$ be the distance from $ \Sigma$, $ g=dr^2+ \sum_{i, j=1}^{2n-2\ell-1}\beta_{ij}(r, \theta, z)d\theta^i d\theta^j+ \sum_{i, j=1}^{2\ell} \alpha_{ij}(r, \theta, z)dz^i dz^j$ in Fermi coordinates w.r.t. $ \Sigma$.
Let $x=(r, \theta, z)$ in Fermi coordinates.
Assume $ s_{4k}>0$ on $ (0, r]$ and that the first zero of $ t\mapsto c_k(t)+\lambda s_k (t)$ (if exists) appears no earlier than the cut distance in the direction $ \theta$, where $ \lambda=\min_{v\in S_p\Sigma} A_\theta(v, v)$.

\begin{enumerate}
\item
We have
\begin{align*}
\Delta d_\Sigma(x)
\ge&
(\log \overline F)'(r, \theta, z)-\phi(r, \theta, z)
\end{align*}
where $\overline F(r, \theta, z) =s_k(r)^{2n-2\ell-2}s_{4k}(r) \det \left[ c_k(r)\mathrm{Id}+s_k(r)A_\theta \right]$,
$ (\alpha^{ij})=(\alpha_{ij})^{-1}$ and
\begin{align*}
\phi(r, \theta, z)
=\sum_{i, j=1}^{2n-2\ell-1}\beta^{ij}(r, \theta, z) \int_{0}^{r} \widehat {\mathrm{R}}_k(\partial_{\theta^i}, \partial_t, \partial_t, \partial_{\theta^j})dt
- \sum_{i, j=1}^{2\ell}\alpha^{ij}(r, \theta, z)\int_{0}^{r}\widehat {\mathrm{R}}_k\left({\partial_{z^i}}, \partial_t, \partial_t, {\partial_{z^j}}\right) dt.
\end{align*}
Here we regard $A_\theta$ as a $(1, 1)$-tensor.
\item
Let $dV=F(r, \theta, z)dr d\theta dz$, then we have
$F(r, \theta, z)\ge \overline F(r, \theta, z)\exp\left[ -\int_{0}^{r}\phi(\rho, \theta, z)d\rho\right]$.
\end{enumerate}

\end{theorem}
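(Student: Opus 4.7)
The plan is to fuse the approaches of Theorems \ref{thm: gunther submfd} (Riemannian G\"unther for submanifolds) and \ref{thm: cpx gunther1} (K\"ahler G\"unther for distance from a point): in Fermi coordinates around $\Sigma$, I would compare three groups of Jacobi fields along the normal geodesic $\gamma = \gamma_\theta$ against model Jacobi fields in $\overline M_k$, scaled respectively by $c_k(t)+\lambda_i s_k(t)$ (tangential to $\Sigma$), $s_k(t)$ (normal to $\Sigma$ and horizontal), and $s_{4k}(t)$ (the $J\theta$-direction, which is vertical in the presentation \eqref{eq: model}).

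First I would take a parallel orthonormal frame $\{F_j\}_{j=1}^{2n} = \{E_1, JE_1, \ldots, E_n, JE_n\}$ along $\gamma$ with $JE_n = \gamma'$, arranged so that $\{F_j(0)\}_{j=1}^{2\ell}$ spans $T_z\Sigma$ and diagonalizes $A_\theta$ with eigenvalues $\{\lambda_j\}$ (using $J$-invariance of both $T_z\Sigma$ and $N_z\Sigma$), while $\{F_j(0)\}_{j=2\ell+1}^{2n-2}$ spans $N_z\Sigma \cap \mathrm{span}(\theta, J\theta)^\perp$ and $F_{2n-1}(0) = -J\theta$. As in Theorem \ref{thm: gunther submfd}, I would choose a local real submanifold $\overline \Sigma \subset \overline M_k$ through some $\overline p$ with unit normal $\overline \theta$ and an orthonormal basis of $T_{\overline p}\overline \Sigma$ in which $\overline A_{\overline \theta}$ is diagonal with the same eigenvalues $\{\lambda_j\}$, and parallel-transport along $\overline\gamma_{\overline\theta}$; Lemma \ref{lem: parallel} ensures $J_{\overline M}$ is parallel along $\overline\gamma$. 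For each $i$, let $Y_i$ be the $\Sigma$-adapted Jacobi field on $M$ when $i \le 2\ell$, or the Jacobi field with $Y_i(0)=0$ when $i \ge 2\ell+1$, each with $Y_i(r) = F_i(r)$, and define $\overline Y_i(t) = \sum_j y_i^j(t)\overline F_j(t)$. Using $\widehat{\mathrm R}_k = R_M - kC$ together with the matching of $A_\theta$ and $\overline A_{\overline\theta}$ (which cancels the boundary contributions of the two index forms), in the spirit of \eqref{eq: I2 cpx},
\begin{equation*}
I_\Sigma(Y_i, Y_i) = I_{\overline\Sigma}(\overline Y_i, \overline Y_i) - \int_0^r \widehat{\mathrm R}_k(Y_i, \gamma', \gamma', Y_i)\,dt.
\end{equation*}

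Next, the index lemma in $\overline M_k$ bounds $I_{\overline\Sigma}(\overline Y_i, \overline Y_i) \ge I_{\overline\Sigma}(\overline X_i, \overline X_i)$, where the comparison field is scaled according to the regime: $\overline X_i = \frac{c_k(t)+\lambda_i s_k(t)}{c_k(r)+\lambda_i s_k(r)}\overline F_i$ for $i \le 2\ell$, $\overline X_i = \frac{s_k(t)}{s_k(r)}\overline F_i$ for $2\ell+1 \le i \le 2n-2$, and $\overline X_{2n-1} = \frac{s_{4k}(t)}{s_{4k}(r)}\overline F_{2n-1}$ (the last scaling reflecting $R_{\overline g}(u,\partial_t)\partial_t = 4k\,u$ in the vertical direction, cf.\ the discussion after \eqref{eq: model}). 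Integration by parts of $I_{\overline\Sigma}(\overline X_i, \overline X_i)$ produces the terms $(c_k'(r)+\lambda_i s_k'(r))/(c_k(r)+\lambda_i s_k(r))$, $s_k'(r)/s_k(r)$ and $s_{4k}'(r)/s_{4k}(r)$, which assemble into $(\log \overline F)'(r,\theta,z)$. Summing the three groups and using the coordinate identity
\begin{equation*}
\sum_i \widehat{\mathrm R}_k(Y_i, \gamma', \gamma', Y_i) = \sum_{i,j=1}^{2n-2\ell-1}\beta^{ij}\widehat{\mathrm R}_k(\partial_{\theta^i}, \partial_t, \partial_t, \partial_{\theta^j}) + \sum_{i,j=1}^{2\ell}\alpha^{ij}\widehat{\mathrm R}_k(\partial_{z^i}, \partial_t, \partial_t, \partial_{z^j})
\end{equation*}
(valid since $\partial_{\theta^i}$ are Jacobi fields vanishing at $t=0$ and $\partial_{z^i}$ are $\Sigma$-adapted Jacobi fields in Fermi coordinates, as in \cite[Lem.~2.9]{gray2012tubes}), together with Proposition \ref{prop: hess} and $\nabla^2 d_\Sigma(\partial_t,\partial_t)=0$, gives part (1). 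Part (2) then follows by integrating $\Delta d_\Sigma = (\log F)'$ in $r$ and recognizing $(\log \overline F)'$ as the radial log-derivative of $s_k(r)^{2n-2\ell-2}s_{4k}(r)\det[c_k(r)\mathrm{Id}+s_k(r)A_\theta]$.

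The main obstacle I expect is organising the three distinct scaling regimes simultaneously along one frame on $\gamma$, and making sure the parallel comparison frame on $\overline\gamma$ respects the horizontal/vertical splitting implicit in \eqref{eq: g cpx} so that the model Jacobi equation yields curvature $k$ in the horizontal and tangential directions but $4k$ along the vertical $J\overline\gamma'$ direction. This is precisely where Lemma \ref{lem: parallel} is needed. By contrast, the existence of $\overline\Sigma$ realising the prescribed $A_\theta$ at a single point is automatic, since only the germ at $\overline p$ enters the index-form computation.
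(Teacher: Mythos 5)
Your proposal is correct and follows essentially the same route as the paper, which itself only sketches the argument: a parallel frame of the form $\{E_1, JE_1,\dots\}$ along $\gamma_\theta$, a model submanifold $\overline\Sigma\subset\overline M_k$ whose second fundamental form matches $A_\theta$ at one point, the index lemma with the three scalings $c_k+\lambda_i s_k$, $s_k$, $s_{4k}$, and the conversion of $\sum_i\widehat{\mathrm R}_k(Y_i,\gamma',\gamma',Y_i)$ into the $\beta^{ij}$/$\alpha^{ij}$ expression via the Fermi-coordinate Jacobi fields — the only caveat being that the paper takes $\overline\Sigma$ to be a \emph{complex} submanifold, which is what guarantees $T_{\overline p}\overline\Sigma$ is $J$-invariant and hence that your tangential comparison fields are orthogonal to $J\overline\gamma'$ (so the model curvature there is $k$, not $4k$); you should make this explicit rather than allowing an arbitrary real $\overline\Sigma$. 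Note also that your coordinate identity produces a plus sign in front of the $\alpha^{ij}$-sum in $\phi$, consistent with Theorem \ref{thm: gunther submfd}; the minus sign in the statement of Theorem \ref{thm: cpx gunther submfd} appears to be a typo rather than a defect of your argument.
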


\begin{proof}
Since the proof is similar to Theorem \ref{thm: gunther submfd}, we just outline it here. We use the notation in the proof of Theorem \ref{thm: cpx submfd}.
Furthermore assume $\{F_i\}_{i=1}^{2\ell}$ diagonalizes $ A_\theta$.
Choose $\overline \Sigma$ to be a (local) $ \ell$-dimensional complex submanifold in $ \overline M_k$ such that at $ \overline p\in \overline \Sigma$ and for $ \overline \theta\in S(N_{\overline p}\overline \Sigma)$, the second fundamental form $ \overline A_ {\overline \theta} $ agrees with $ A_ \theta $. Let $\{\overline F_i\}$ be analogously defined along $\overline \gamma=\overline \gamma_{\overline \theta}$.

For $ Y_i(t)=\sum_{j=1}^{2\ell} y_i^j (t)F_j(t)$, define $ \overline Y_i(t)=\sum_{j=1}^{2\ell} y_i^j (t)\overline F_j(t)$ and $ \overline X_i(t)= \frac{c_k(t)+\lambda_i s_k(t)}{c_k(r)+\lambda_i s_k(r)}\overline F_i(t)$. We can then proceed as in Theorem \ref{thm: gunther submfd} to obtain the result.

\end{proof}

\section{Some applications}\label{sec: applications}

Theorem \ref{thm: est}, Theorem \ref{thm: area vol est}, or Proposition \ref{prop: radial} can be used to provide weaker assumptions to many classical theorems, and at the same time give better estimates (if desired). For example, if only an integral version of the Laplacian comparison theorem for a radial function is used to prove a certain result, Proposition \ref{prop: radial} can often be a substitute to provide weaker weaker assumption than a pointwise Ricci curvature lower bound or even an integral bound along all geodesics emanating from a point. We illustrate some of the possibilities here.

The following result characterizes the equality case in Theorem \ref {thm: bonnet myers} and generalizes Cheng's maximal diameter theorem.

\begin{theorem}\label{thm: diameter}
Let $M$ be a complete Riemannian manifold.
Assume
\begin{enumerate}
\item
$s_k(t)=s_k(r_0-t)$ where $ r_0$ is the first positive zero of $ s_k$. \item
There exists $p_1, p_2\in M$ such that $ d(p_1, p_2)=r_0$.
\item
For any $\theta\in S_{p_i}M$ and any $ r \in(0, r_0)$, we have
$\displaystyle \int_{0}^{r}\widehat {\mathrm{Ric}}_{k} \left(s_k(t) \gamma_\theta'(t) \right) dt\ge 0$.
\end{enumerate}
Then $M$ is isometric to $ (\overline M=[0, r_0]\times \mathbb S^{n-1}, \overline g=dt^2+s_k(t)^2g_{\mathbb S^{n-1}})$.
\end{theorem}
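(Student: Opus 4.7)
The plan is to combine the relative volume comparison of Theorem \ref{thm: area vol est} applied at both $p_1$ and $p_2$ with the symmetry $s_k(t)=s_k(r_0-t)$, which makes the balls $B_{\overline g}(r)$ and $B_{\overline g}(r_0-r)$ complementary inside $\overline M$. The skeleton is that of Cheng's classical maximal diameter theorem, but with the integral curvature hypothesis replacing the pointwise Ricci bound.

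First I would invoke Theorem \ref{thm: bonnet myers} through the sufficient condition \eqref{ineq: BM} of Remark \ref{rem: rem1} (which follows by sending $r\to r_0^-$ in the assumed integral bound) to conclude that $\mathrm{diam}(M)\le r_0$. In particular $M$ is compact and, since the cut locus has measure zero, $|\mathcal B_g(r_0,p_i)|=|M|$ for $i=1,2$. The triangle inequality then gives the disjointness
\begin{equation*}
\mathcal B_g(r,p_1)\cap\mathcal B_g(r_0-r,p_2)=\emptyset,\qquad r\in(0,r_0),
\end{equation*}
so that $|\mathcal B_g(r,p_1)|+|\mathcal B_g(r_0-r,p_2)|\le|M|$.

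In the opposite direction, Theorem \ref{thm: area vol est}(3) applied at each $p_i$ yields the monotonicity of $\rho\mapsto|\mathcal B_g(\rho,p_i)|/|B_{\overline g}(\rho)|$ on $(0,r_0]$. Combined with $\lim_{\rho\to 0^+}|\mathcal B_g(\rho,p_i)|/|B_{\overline g}(\rho)|=1$ and the value $|M|/|\overline M|$ at $\rho=r_0$, this gives
\begin{equation*}
|\mathcal B_g(\rho,p_i)|\ge\frac{|M|}{|\overline M|}\,|B_{\overline g}(\rho)|,\qquad 0<\rho\le r_0.
\end{equation*}
The symmetry $s_k(t)=s_k(r_0-t)$ yields, by a direct change of variable in $\int_0^{r_0}s_k^{n-1}$, the complementarity $|B_{\overline g}(r)|+|B_{\overline g}(r_0-r)|=|B_{\overline g}(r_0)|=|\overline M|$. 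Summing the two lower bounds at $\rho=r$ and $\rho=r_0-r$ therefore produces the reverse inequality $|\mathcal B_g(r,p_1)|+|\mathcal B_g(r_0-r,p_2)|\ge|M|$. Comparison with the disjointness estimate forces equality throughout, which pins the monotone ratio $|\mathcal B_g(\rho,p_1)|/|B_{\overline g}(\rho)|$ to the constant $|M|/|\overline M|$ on $(0,r_0]$; since it tends to $1$ at $0^+$, we conclude $|M|=|\overline M|$ and $|\mathcal B_g(\rho,p_1)|=|B_{\overline g}(\rho)|$ for every $\rho\le r_0$.

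At this point the rigidity clause of Theorem \ref{thm: area vol est}(3) applies and delivers an isometry $\mathcal B_g(r_0,p_1)\cong B_{\overline g}(r_0)=\overline M$, which extends across the measure-zero distance sphere $\{d_{p_1}=r_0\}$ by continuity to the desired global isometry $M\cong\overline M$. The principal obstacle I anticipate is bookkeeping at the boundary $\rho=r_0$: one must justify that the integral hypothesis propagates to the closed endpoint $r=r_0$ (so that both the Bonnet--Myers diameter bound and the $\mathcal B_g$-monotonicity genuinely hold up to $r_0$), and verify that the equality $|\mathcal B_g(r_0,p_1)|=|B_{\overline g}(r_0)|$ is a legitimate input to the rigidity part of Theorem \ref{thm: area vol est}(3); the remaining arguments are a straightforward assembly.
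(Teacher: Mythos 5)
Your proposal is correct and follows essentially the same route as the paper's proof: Bonnet--Myers for the diameter bound, the monotone volume ratio from Theorem \ref{thm: area vol est}(3) at both $p_1$ and $p_2$, the complementarity $|B_{\overline g}(r)|+|B_{\overline g}(r_0-r)|=|\overline M|$ from the symmetry of $s_k$, disjointness of the two metric balls, and the rigidity clause. Your explicit observation that the constant ratio must equal $1$ because it tends to $1$ as $\rho\to 0^+$ is a detail the paper leaves implicit, and it is a welcome clarification rather than a deviation.
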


\begin{proof}
By Theorem \ref{thm: bonnet myers} and Theorem \ref{thm: area vol est},
for any $r<r_0$, we have
$\frac{|\mathcal B_g(r, p_1)|}{|B_{\overline g}(r)|}
\ge\frac{|\mathcal B_{g} (r_0, p_1) |}{|B_{\overline g}(r_0)|}
=\frac{|M|}{|\overline M|}$.
So
$|\mathcal B_g(r, p_1)| \ge \frac{|M|}{|\overline M|} |B_{\overline g}(r)|$ and
by the same reason, $|\mathcal B_{g} (r_0-r, p_2) | \ge \frac{|M|}{|\overline M|} |B_{\overline g}(r_0-r)|$.
Adding them gives
\begin{align*}
|\mathcal B_g(r, p_1)| +|\mathcal B_{g} (r_0-r, p_2) | \ge \frac{|M|}{|\overline M|} (|B_{\overline g}(r)|+|B_{\overline g}(r_0-r)|)=|M|.
\end{align*}
But it is easy to see that $\mathcal B_g(r, p_1)$ and $ \mathcal B_g (r_0-r, p_2) $ must be disjoint, and so the inequalities above are all equalities. In particular,
\begin{align*}
\frac{|\mathcal B_g(r, p_1)|}{|B_{\overline g}(r)|}
=\frac{|M|}{|\overline M|}
\end{align*}
is constant for all $0<r\le r_0$. From this it follows from the proof of Theorem \ref{thm: area vol est} that $M=\overline {\mathcal B_{r_0}(p_1)}$ is isometric to the closed ball $ \overline {B_{\overline g}(r_0)}$ with metric $ dr^2+s_k(r)^2g_{\mathbb S^{n-1}}$, which is $ \overline M$.
\end{proof}

The estimate \eqref{ineq: lap d1} can be used to weaken the assumptions in Cheng's eigenvalue comparison theorem (\cite[Theorem 1.1]{cheng1975eigenvalue}). For constant $k$, we denote the geodesic ball of radius $r$ in the simply connected space form $ \overline M_k$ of curvature $ k$ by $ B_{\overline g}(r)$.

\begin{theorem}\label{thm: Cheng first thm}
Let $k$ be constant.
Suppose $M$ is a Riemannian manifold and $ p\in M $ such that
$\displaystyle \int_{\mathcal B_g'(\rho, p)} \widehat{\mathrm{Ric}}_k \left(s_k(t) \partial_t\right) d V \ge 0$ for all $ 0\le \rho\le r<\mathrm{diam}(M_k)$.
Then
$\lambda_1(\mathcal B_g(r, p)) \le\lambda_1(B_{\overline g}(r))$,
where $\lambda_1$ is the first eigenvalue with respect to the Dirichlet boundary condition.
The equality holds if and only if $\mathcal B_g(r, p)$ is isometric to $ B_{\overline g}(r)$.
\end{theorem}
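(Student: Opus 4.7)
The plan is to compare via the Rayleigh quotient, using the first Dirichlet eigenfunction of the model ball $B_{\overline g}(r)$ as a test function on $\mathcal B_g(r, p)$.

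First, let $\phi\colon[0,r]\to\mathbb R$ denote the first Dirichlet eigenfunction on $B_{\overline g}(r)$, normalized so that $\phi>0$ on $[0,r)$, $\phi(r)=0$, and $\overline\Delta\phi=-\lambda_1(B_{\overline g}(r))\phi$. Since $B_{\overline g}(r)$ is rotationally symmetric, $\phi$ is a radial function in the sense of the paper (smooth and even in $t$), and a standard Sturm--Liouville oscillation argument applied to $\phi''+\frac{\overline F'}{\overline F}\phi'+\lambda_1(B_{\overline g}(r))\phi=0$ with $\phi'(0)=0$, $\phi(r)=0$ shows that $\phi'(t)\le 0$ on $[0,r]$; in fact $\phi'(t)<0$ for $t\in(0,r]$. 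Thus $\phi$ qualifies as a non-increasing radial function, and extending $\phi\circ d_p$ by zero outside $\mathcal B_g(r,p)$ produces a valid test function in $H_0^1(\mathcal B_g(r,p))$.

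Next, apply Proposition~\ref{prop: radial} with $\psi=\phi$; the hypotheses are exactly those imposed in the theorem. This yields
\begin{equation*}
\int_{\mathcal B_g(r,p)}|\nabla(\phi\circ d_p)|^2\,dV
\le -\int_{\mathcal B_g(r,p)}(\phi\circ d_p)(\overline\Delta\phi\circ d_p)\,dV
= \lambda_1(B_{\overline g}(r))\int_{\mathcal B_g(r,p)}(\phi\circ d_p)^2\,dV.
\end{equation*}
The variational characterization of $\lambda_1(\mathcal B_g(r,p))$ immediately gives $\lambda_1(\mathcal B_g(r,p))\le\lambda_1(B_{\overline g}(r))$.

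For the rigidity, suppose equality holds. Then $\phi\circ d_p$ must itself achieve the infimum in the Rayleigh quotient for $\mathcal B_g(r,p)$, so it is a first Dirichlet eigenfunction, and every inequality in the argument above is an equality. Tracing back through inequality~\eqref{ineq: radial2} and using $\phi'(t)<0$ on $(0,r]$, this forces $F'(t,\theta)/F(t,\theta)=\overline F'(t)/\overline F(t)$ almost everywhere on $\mathcal B_g'(r,p)$, hence $F(t,\theta)=\overline F(t)$ throughout. The main obstacle is then the rigidity step already present in the proof of Theorem~\ref{thm: area vol est}: from this pointwise equality one deduces that the $p$-adapted normal Jacobi fields along radial geodesics have the form $s_k(t)e(t)$ with $e(t)$ parallel orthogonal to $\gamma$, and the Cartan--Ambrose--Hicks theorem then furnishes an isometry between $\mathcal B_g(r,p)$ and $B_{\overline g}(r)$.
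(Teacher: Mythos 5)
Your proposal is correct and follows essentially the same route as the paper: both use the radial, non-increasing first Dirichlet eigenfunction of $B_{\overline g}(r)$ as a test function, feed it into Proposition~\ref{prop: radial} with $\psi=\phi$, and conclude by the variational characterization of $\lambda_1$. The only differences are cosmetic --- the paper cites Cheng for the monotonicity of $\phi$ and for the equality case, whereas you sketch the Sturm--Liouville argument and the rigidity tracing yourself.
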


\begin{proof}
Let $\lambda=\lambda_1(B_{\overline g} (r))$ and $ \phi>0$ be the first eigenfunction on $ \overline { B_{\overline g}(r)}$, which is radial (cf. \cite{cheng1975eigenvalue} p. 290) and $ \frac{d\phi}{dt}<0$ on $ (0, r)$ (\cite[Lemma 3.7]{cheng1975eigenfunctions}). Consider $ \phi\circ d_p: \overline {\mathcal B_g(r, p)}\to \mathbb R$ as a test function, simply written as $ \phi$. Then Proposition \ref{prop: radial} gives
\begin{align*}
\int_{\mathcal B_g(r, p)} |\nabla \phi|^2 \le \lambda \int_{\mathcal B_g(r, p)} \phi^2.
\end{align*}
By the minimization property of the first eigenvalue, the result follows.
The equality case is the same as \cite[Theorem 1.1]{cheng1975eigenvalue}.
\end{proof}

Since \cite[Theorem 1.1]{cheng1975eigenvalue} is used to prove \cite[Theorem 2.1]{cheng1975eigenvalue}, by analyzing the proof of \cite[Theorem 2.1]{cheng1975eigenvalue} and using Theorem \ref{thm: Cheng first thm} we also have
\begin{theorem}\label{thm: Cheng second thm}
Let $k$ be constant.
Suppose $M$ is an $ n$-dimensional compact Riemannian manifold with $ \mathrm{diam}(M)=d_M$. Suppose $ \frac{d_M}{2}<\mathrm{diam}(\overline M_k)$ for some $ k$ and for all $ p\in M$ and $ t\in [0, \frac{1}{2}d_M]$, we have
$\int_{\mathcal B_g'(t, p)} \widehat{\mathrm{Ric}}_k \left(s_k(\rho) \partial _\rho\right) d V \ge 0$.
Then
$\mu_i(M) \le \lambda_1\left(B_{\overline g}\left(\frac{d_M}{2i}\right)\right) $,
where $\mu_i(M)$ is the $ i$-th eigenvalue of $ M$.
\end{theorem}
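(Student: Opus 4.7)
The plan is to adapt Cheng's original proof of \cite[Theorem 2.1]{cheng1975eigenvalue} for his higher-eigenvalue bound, with Theorem \ref{thm: Cheng first thm} playing the role of the first-eigenvalue comparison on a geodesic ball. First I would pick a minimizing geodesic $\gamma:[0,d_M]\to M$ realizing the diameter, and set $p_j:=\gamma(j\,d_M/i)$ for $j=0,1,\dots,i$. Since $\gamma$ is minimizing between its endpoints, it is automatically minimizing between any two of its points, so $d(p_j,p_k)=|j-k|\,d_M/i$; in particular the $i+1$ open geodesic balls $\mathcal B_g(d_M/(2i),p_j)$ are pairwise disjoint (if a point $x$ lay in two of them, the triangle inequality would force $d(p_j,p_{j+1})<d_M/i$, a contradiction).

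On each of these balls I would invoke Theorem \ref{thm: Cheng first thm}. The required curvature hypothesis $\int_{\mathcal B_g'(t,p)}\widehat{\mathrm{Ric}}_k(s_k(\rho)\partial_\rho)\,dV\ge 0$ is postulated uniformly in $p\in M$ for $t\in[0,d_M/2]$, hence holds at each $p_j$ for the relevant range $t\le d_M/(2i)$; and the requirement $d_M/(2i)<\mathrm{diam}(\overline M_k)$ follows from the standing assumption $d_M/2<\mathrm{diam}(\overline M_k)$. Thus for every $j$,
\[
\lambda_1\bigl(\mathcal B_g(d_M/(2i),p_j)\bigr)\le \lambda_1\bigl(B_{\overline g}(d_M/(2i))\bigr).
\]

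Finally I would feed this into the Rayleigh--Ritz characterization of $\mu_i(M)$. Let $\phi_j$ be the first Dirichlet eigenfunction on $\mathcal B_g(d_M/(2i),p_j)$, extended by zero to all of $M$. Because the supports are pairwise disjoint, the $\phi_0,\dots,\phi_i$ are linearly independent in $H^1(M)$ and every cross term in the $L^2$ and Dirichlet inner products vanishes, so for any $f=\sum_{j=0}^i a_j\phi_j$,
\[
\frac{\int_M|\nabla f|^2}{\int_M f^2}=\frac{\sum_j a_j^2\,\lambda_1(\mathcal B_g(d_M/(2i),p_j))\,\|\phi_j\|_{L^2}^2}{\sum_j a_j^2\,\|\phi_j\|_{L^2}^2}\le\max_{0\le j\le i}\lambda_1\bigl(\mathcal B_g(d_M/(2i),p_j)\bigr).
\]
Since this furnishes an $(i{+}1)$-dimensional test subspace of $H^1(M)$, the min--max principle (with the convention $\mu_0=0$) yields $\mu_i(M)\le\max_j \lambda_1(\mathcal B_g(d_M/(2i),p_j))\le\lambda_1(B_{\overline g}(d_M/(2i)))$, as desired. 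I do not anticipate any genuine analytic obstacle: the entire argument is a transcription of Cheng's bookkeeping, with the key pointwise comparison step upgraded to the integral version of Theorem \ref{thm: Cheng first thm}, and the disjointness of supports is exactly what makes orthogonalization against constants unnecessary.
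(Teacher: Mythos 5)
Your proposal is correct and is precisely the argument the paper intends: the paper's "proof" consists of the single remark that one should rerun Cheng's proof of his Theorem 2.1 with the classical first-eigenvalue comparison replaced by Theorem \ref{thm: Cheng first thm}, and your write-up (disjoint balls of radius $d_M/(2i)$ along a diameter-realizing geodesic, first Dirichlet eigenfunctions as disjointly supported test functions, min--max with the convention $\mu_0=0$) is exactly that transcription. No gaps.
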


Now instead we use $\overline M_k$ to denote the complex space form and $B_{\overline g}(r)$ denotes its geodesic ball.
The K\"ahler version of Theorem \ref{thm: Cheng first thm} is the following result.
\begin{theorem}
Let $k$ be constant.
Suppose $M$ is a K\"ahler manifold and $ p\in M $ such that
$\displaystyle \int_{\mathcal B_g(t, p)} \widehat{\mathrm{Ric}}_{k}^\perp (s_k(\rho) \partial_\rho)dV\ge 0$ and $\displaystyle \int_{\mathcal B_g(t, p)} \widehat{ {\mathrm{H}} }_{2k} (s_{k}(2\rho) \partial_\rho) dV\ge 0$
for all $ 0\le \rho\le r<\mathrm{diam}(\overline M_k)$.
Then
$\lambda_1(\mathcal B_g(r, p)) \le\lambda_1(B_{\overline g} (r))$,
where $\lambda_1$ is the first eigenvalue with respect to the Dirichlet boundary condition.
The equality holds if and only if $\mathcal B_g(r, p)$ is isometric to $ B_{\overline g}(r)$.
\end{theorem}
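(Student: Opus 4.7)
The plan is to mimic the proof of Theorem \ref{thm: Cheng first thm} but with the K\"ahler Laplacian comparison (Theorem \ref{thm: kahler}(1)) in place of Theorem \ref{thm: est}. First I would establish a K\"ahler analogue of Proposition \ref{prop: radial}: namely, for non-negative radial functions $\phi,\psi$ with $\phi$ non-increasing, if
\begin{equation*}
\int_{\mathcal B_g(t, p)} \widehat{\mathrm{Ric}}_{k}^\perp (s_k(\rho) \partial_\rho)\,dV\ge 0, \qquad \int_{\mathcal B_g(t, p)} \widehat{{\mathrm{H}}}_{2k} (s_{k}(2\rho) \partial_\rho)\, dV\ge 0
\end{equation*}
for all $t\in [0,r]$, then
\begin{equation*}
\int_{\mathcal B_g(r, p)}\langle \nabla (\psi\circ d_p), \nabla (\phi\circ d_p)\rangle \le -\int_{\mathcal B_g(r, p)} (\psi\circ d_p)\cdot (\overline \Delta \phi)\circ d_p,
\end{equation*}
where $\overline{\Delta}\phi(t)=\phi''(t)+\frac{\overline F'(t)}{\overline F(t)}\phi'(t)$ with $\overline F(t)=s_k(t)^{2n-2}s_{4k}(t)$ the volume density of $\overline M_k$. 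The derivation is formally identical to the proof of Proposition \ref{prop: radial}: integrate by parts along radii, rewrite the remaining error term using $\frac{F'}{F}-\frac{\overline F'}{\overline F}\le -\phi(r,\theta)$ with $\phi$ from Theorem \ref{thm: kahler}(1), and use $\phi'\le 0$ together with the integral curvature hypotheses to conclude that the error integral is non-positive. Here the factor $\frac{1}{s_{k_1}(t)^2 s_{k_2}(t)}$ appearing upon converting the sphere integral to a ball integral (via $\overline F$) is what produces the exponents matching $s_k(\rho)\partial_\rho$ and $s_k(2\rho)\partial_\rho$ in the hypotheses (using $s_{4k}(\rho)=\frac{1}{2}s_k(2\rho)$ when $k>0$, and analogous identities otherwise).

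Next I would take $\phi$ to be the first Dirichlet eigenfunction on $\overline {B_{\overline g}(r)}\subset \overline M_k$. Since $\overline M_k$ is two-point homogeneous, $\phi$ may be chosen radial and positive on $[0,r)$, and by the same argument as in \cite[Lemma 3.7]{cheng1975eigenfunctions} we have $\phi'(t)<0$ on $(0,r)$; in particular $\phi\circ d_p$ extends by zero to a valid test function on $\overline{\mathcal B_g(r,p)}$, and it is non-increasing in the sense required. Since $\overline \Delta \phi =-\lambda \phi$ with $\lambda=\lambda_1(B_{\overline g}(r))$, the K\"ahler radial inequality applied with $\psi=\phi$ gives
\begin{equation*}
\int_{\mathcal B_g(r, p)} |\nabla (\phi\circ d_p)|^2 \le \lambda \int_{\mathcal B_g(r, p)} (\phi\circ d_p)^2.
\end{equation*}
The minimization (Rayleigh) characterization of $\lambda_1$ then yields $\lambda_1(\mathcal B_g(r,p))\le \lambda$.

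For the rigidity statement, equality $\lambda_1(\mathcal B_g(r,p))=\lambda_1(B_{\overline g}(r))$ forces $\phi\circ d_p$ to be an eigenfunction on $\mathcal B_g(r,p)$, so every inequality used in the derivation above must be an equality. In particular, the error term $\phi(r,\theta)$ in the Laplacian comparison of Theorem \ref{thm: kahler}(1) must vanish pointwise on a set of full measure, and standard unique continuation implies equality holds everywhere. Invoking the equality case in Theorem \ref{thm: kahler}(4), which uses Cartan-Ambrose-Hicks as in the proof of that theorem, yields that $\mathcal B_g(r,p)$ is isometric to $B_{\overline g}(r)$.

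The main obstacle is the first step: correctly setting up the K\"ahler analogue of Proposition \ref{prop: radial} so that the integral curvature assumptions feed in as stated. The subtlety is that the K\"ahler comparison involves two distinct scaling factors $\frac{s_{k_1}(t)^2}{s_{k_1}(r)^2}$ and $\frac{s_{k_2}(t)^2}{s_{k_2}(r)^2}$ for the two pieces $\widehat{\mathrm{Ric}}_k^\perp$ and $\widehat{\mathrm H}_{2k}$, so the Fubini argument converting a radial sphere integral into a ball integral must be performed separately for the two terms. Once that splitting is in place, the rest of the argument is an almost mechanical adaptation of Proposition \ref{prop: radial} and Theorem \ref{thm: Cheng first thm}.
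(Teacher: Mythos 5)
Your proposal follows essentially the same route as the paper: the paper likewise proves a Kähler analogue of Proposition \ref{prop: radial} (with the two curvature pieces $\widehat{\mathrm{Ric}}_{k_1}^\perp$ and $\widehat{\mathrm{H}}_{k_2/2}$ converted separately, each with its own weight $s_{k_i}(t)^{-2}$, via Theorem \ref{thm: kahler}) and then tests the Rayleigh quotient with the radial first eigenfunction of $B_{\overline g}(r)$ exactly as in Theorem \ref{thm: Cheng first thm}. The argument, including the treatment of the two distinct scaling factors and the rigidity via the equality case of the comparison, is the one the paper intends.
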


The proof uses the following
\begin{proposition}
Let $\phi, \psi$ be defined as in Proposition \ref{prop: radial}.
Let $(M, g)$ be a K\"ahler manifold.
Suppose
\begin{align*}
\frac{1}{s_{k_1}(t)^2}\int_{\mathcal B_g'(t, p)}
\widehat{\mathrm{Ric}}_{k_1}^\perp (s_{k_1}(\rho) \partial_\rho)dV+\frac{1}{s_{k_2}(t)^2}\int_{\mathcal B_g'(t, p)} \widehat{ {\mathrm{H}} }_{\frac{k_2}{2}} (s_{k_2}(\rho) \partial_\rho) dV\ge 0
\end{align*}
for all $ 0\le t\le r$. Then
\begin{align*}
\int_{\mathcal B_g(r, p)}\langle \nabla (\psi\circ d_p), \nabla (\phi\circ d_p)\rangle \le -\int_{\mathcal B_g(r, p)} \left(\psi\circ d_p\right)\cdot(\overline \Delta \phi)\circ d_p
\end{align*}
where $\overline \Delta \phi(r):=\phi''+\frac{\overline F'(r)}{\overline F(r)}$ is the Laplacian of $ \phi$ with respect to the metric $ \overline g$ defined in \eqref{eq: model} and $\overline F(t)=s_{k_1}(t)^{2n-2}s_{k_2}(t)$.
\end{proposition}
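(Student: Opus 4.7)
The plan is to follow the proof of Proposition \ref{prop: radial} almost verbatim, with the Riemannian Laplacian comparison (Theorem \ref{thm: est}) replaced by its K\"ahler analogue (Theorem \ref{thm: kahler}). I would work in geodesic polar coordinates $(t,\theta)$ at $p$ with $\theta\in S_pM\cong\mathbb S^{2n-1}$, write $dV=F(t,\theta)\,dt\,d\theta$, and set $a(\theta)=\min\{\mathrm c(\theta),r\}$. Since $\phi,\psi$ are radial, the left hand side of the target inequality expands as $\int_{S_pM}\int_0^{a(\theta)}\psi'(t)\phi'(t)F(t,\theta)\,dt\,d\theta$. My first step is the same integration by parts as in \eqref{ineq: radial}, which produces a boundary term that is $\le 0$ (because $\psi\ge 0$, $\phi'\le 0$, $F\ge 0$) plus $-\int_0^{a(\theta)}\psi(\phi''F+\phi'F')\,dt$, which I would split using the identity $\phi''F+\phi'F'=F\,\overline\Delta\phi+F\phi'(F'/F-\overline F'/\overline F)$. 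Here $\overline F(r)=s_{k_1}(r)^{2n-2}s_{k_2}(r)$ is the volume density of the K\"ahler model \eqref{eq: model}, so that integrating the $F\,\overline\Delta\phi$ piece over $S_pM$ reproduces the right hand side of the claim, modulo an error controlled by $F'/F-\overline F'/\overline F$.

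The core of the argument is then to show that this error term has the right sign, using the K\"ahler Laplacian estimate from Theorem \ref{thm: kahler}:
\[
\frac{F'(t,\theta)}{F(t,\theta)}-\frac{\overline F'(t)}{\overline F(t)}\le -\int_0^t\!\left(\tfrac{s_{k_1}(\rho)^2}{s_{k_1}(t)^2}\widehat{\mathrm{Ric}}_{k_1}^\perp(\gamma_\theta'(\rho))+\tfrac{s_{k_2}(\rho)^2}{s_{k_2}(t)^2}\widehat{\mathrm{H}}_{k_2/2}(\gamma_\theta'(\rho))\right)d\rho.
\]
Because $\psi\phi'F\le 0$, this inequality reverses sign upon multiplication, and integrating over $\theta\in S_pM$ and swapping orders of integration (exactly the sphere-to-ball conversion used in \eqref{ineq: radial2}) bounds the error term above by
\[
-\int_0^r\!\psi(t)|\phi'(t)|\Bigl(\tfrac{1}{s_{k_1}(t)^2}\!\!\int_{\mathcal B_g'(t,p)}\!\widehat{\mathrm{Ric}}^\perp_{k_1}(s_{k_1}(\rho)\partial_\rho)\,dV +\tfrac{1}{s_{k_2}(t)^2}\!\!\int_{\mathcal B_g'(t,p)}\!\widehat{\mathrm{H}}_{k_2/2}(s_{k_2}(\rho)\partial_\rho)\,dV\Bigr)dt,
\]
which is $\le 0$ by the hypothesis and the non-negativity of $\psi|\phi'|$. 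Combined with the boundary contribution and the $F\,\overline\Delta\phi$ piece, this yields the desired inequality.

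The main technical point is the Fubini-type conversion in the last display: because Theorem \ref{thm: kahler} contributes two separate terms with their own radial weights $s_{k_1}(\rho)^2/s_{k_1}(t)^2$ and $s_{k_2}(\rho)^2/s_{k_2}(t)^2$, the sphere-to-ball exchange of integration used in the Riemannian case must be applied independently on each piece. This is precisely the reason the hypothesis couples the two ball integrals with reciprocal factors of $s_{k_i}(t)^2$ rather than imposing positivity of each separately. Everything else is routine once one has identified the correct model volume density $\overline F(t)=s_{k_1}(t)^{2n-2}s_{k_2}(t)$ and the radial Laplacian $\overline\Delta\phi=\phi''+(\overline F'/\overline F)\phi'$ coming from the Ehresmann connection splitting $T\overline M=\mathcal H\oplus\mathcal V$.
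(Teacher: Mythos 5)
Your proposal is correct and follows essentially the same route as the paper: the paper's proof simply reruns Proposition \ref{prop: radial} with the last line of \eqref{ineq: radial2} replaced by the two-term K\"ahler expression coming from Theorem \ref{thm: kahler}, which is exactly the integration by parts, splitting off $F\,\overline\Delta\phi$, and sphere-to-ball Fubini exchange (applied to each of the $\widehat{\mathrm{Ric}}^\perp_{k_1}$ and $\widehat{\mathrm{H}}_{k_2/2}$ pieces with their respective weights) that you describe.
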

\begin{proof}
The proof is the same as Proposition \ref{prop: radial} except we replace the last line of \eqref{ineq: radial2} by
\begin{align*}
-\int_{0}^{r}\phi (t) |\phi'(t)|
\left(
\frac{1}{s_{k_1}(t)^2}\int_{\mathcal B_g(t, p)} \widehat{\mathrm{Ric}}_{k_1}^\perp (s_{k_1}(\rho) \partial_\rho)dV
+ \frac{1}{s_{k_2}(t)^2}\int_{\mathcal B_g(t, p)} \widehat{ {\mathrm{H}} }_{\frac{k_2}{2}} (s_{k_2}(\rho) \partial_\rho) dV\right) dt\le 0,
\end{align*}
which follows from Theorem \ref{thm: kahler}.

\end{proof}

\end{document}